\newtheorem{df}{Definition}[section]
\newtheorem{rem}[df]{Remark}
\newtheorem{ex}[df]{Example}
\newtheorem{thm}[df]{Theorem}
\newtheorem{pp}[df]{Proposition} 
\newtheorem{lm}[df]{Lemma}
\newtheorem{cor}[df]{Corollary}
\newtheorem*{notation}{Notation}
\newtheorem{thm intro}{Theorem}
\def\so{\mathfrak{so}}
\def\su{\mathfrak{su}}
\def\sl{\mathfrak{sl}}
\def\sp{\mathfrak{sp}}
\def\gg{\mathfrak{g}}
\def\hh{\mathfrak{h}}
\def\mm{\mathfrak{m}}
\def\uu{\mathfrak{u}}
\def\kt{\tilde{k}}
\def\gt{\tilde{\gg}}
\def\gl{\mathfrak{gl}}
\title{\Large{\textbf{The Kostant invariant and special $\epsilon$-orthogonal representations for $\epsilon$-quadratic colour Lie algebras }}}
\author{Philippe Meyer}
\date\today
\begin{document}
\maketitle
\begin{center}
\textbf{Abstract} 
\end{center}
Let $k$ be a field of characteristic not two or three, let $\gg$ be a finite-dimensional colour Lie algebra and let $V$ be a finite-dimensional representation of $\gg$. In this article we give various ways of constructing a colour Lie algebra $\gt$ whose bracket in some sense extends both the bracket of $\gg$ and the action of $\gg$ on $V$. Colour Lie algebras, originally introduced by R. Ree (\cite{Ree60}), generalise both Lie algebras and Lie superalgebras, and in those cases our results imply many known results (\cite{Kos99}, \cite{Kos01}, \cite{ChenKang2015}, \cite{StantonSlupinski15}). For a class of representations arising in this context we show there are covariants satisfying identities analogous to Mathews identities for binary cubics.

\section*{Introduction}
\addcontentsline{toc}{section}{\protect\numberline{}Introduction}

In \cite{Kos56}, \cite{Kos99}, B. Kostant studied the following problem:  if $(\gg,(\phantom{v},\phantom{v})_{\gg})$ is a complex quadratic Lie algebra and $(V,(\phantom{v},\phantom{v})_V)$ is an orthogonal representation, when is there a Lie bracket on $\gt=\gg\oplus V$ extending the bracket of $\gg$ and the action of $\gg$ on $V$ and such that $(\phantom{v},\phantom{v})_{\gg}\perp (\phantom{v},\phantom{v})_V$ is $\gt$-invariant ? He first observed that the component in $\gg$ of such a bracket is essentially the moment map $\mu : \Lambda^2(V) \rightarrow \gg$ of the representation, and that the component in $V$ of this bracket defines an alternating, $\gg$-invariant trilinear form $\phi$ on $V$. He then gave a necessary and sufficient condition for the moment map of an orthogonal representation and an alternating invariant trilinear form to arise in this way. This condition is the vanishing of an invariant taking values in the Clifford algebra $C(V,(\phantom{v},\phantom{v})_V)$. Furthermore, in the case where $\phi$ is non-zero and where $\gg\oplus V$ is a Lie algebra, Kostant (\cite{Kos99}) constructed an element of $U(\gg\oplus V)\otimes C(V,(\phantom{v},\phantom{v})_V)$, the cubic Dirac operator, and proved an analogue of the Parthasarathy formula (\cite{Parthasarathy72}).
\vspace{0.2cm}

Later, Kostant (\cite{Kos01}) also considered Lie superalgebras from this point of view and extended his results to symplectic representations of quadratic Lie algebras arising from the canonical $\mathbb{Z}_2$-gradation of quadratic Lie superalgebras. In that case, the relevant invariant takes its values in a Weyl algebra as opposed to a Clifford algebra. Z. Chen and Y. Kang generalised these results to orthosymplectic representations of quadratic Lie superalgebras where it turns out that the relevant invariant takes its values in a Clifford-Weyl algebra (\cite{ChenKang2015}).
\vspace{0.2cm}

A different situation is considered in \cite{CS09}, \cite{StantonSlupinski15} where the authors study the existence of a Lie bracket on $\gt=\gg\oplus \sl(2,k)\oplus V\otimes k^2$ extending the bracket of a Lie algebra $\gg$ defined over a field $k$ of characteristic not two or three and its action on a symplectic representation $V$. A necessary and sufficient condition for this to be the case can be expressed in terms of the moment map of $V$, and symplectic representations whose moment map satisfy this condition are called special symplectic.
\vspace{0.2cm}

In this article we will consider the extension problems above for colour Lie algebras and their representations over a field $k$ of characteristic not two or three. Recall that for multilinear maps on vector spaces graded by an abelian group $\Gamma$ there is a notion of symmetry and antisymmetry with respect to any commutation factor $\epsilon$ of $\Gamma$. A $(\Gamma,\epsilon)$-colour Lie algebra (abbreviated to colour Lie algebra in what follows) is a $\Gamma$-graded vector space together with a bracket which is antisymmetric in this sense and satisfies an appropriate Jacobi identity. The natural generalisation of an orthogonal or symplectic representation of a quadratic Lie algebra (as well as of an orthosymplectic representation of a quadratic Lie superalgebra) is an $\epsilon$-orthogonal representation of an $\epsilon$-quadratic colour Lie algebra (see Example \ref{ex rep osp o et sp}). Our first theorem (Section \ref{section colour Lie type}) is:

\begin{thm intro} \label{thm intro colour Lie type}
Let $\epsilon$ be a commutation factor of an abelian group $\Gamma$. Let $\rho : \gg \rightarrow \so_{\epsilon}(V,(\phantom{v} , \phantom{v})_V)$ be a finite-dimensional $\epsilon$-orthogonal representation of a finite-dimensional $\epsilon$-quadratic colour Lie algebra $(\gg,B_{\gg})$ and let $\mu \in Alt_{\epsilon}^2(V,\gg)$ be its moment map.
\begin{enumerate}[label=\alph*)]
\item If there exists an $\epsilon$-quadratic colour Lie algebra structure on $(\gg\oplus V,B_{\gg}\perp (\phantom{v} , \phantom{v})_V)$ extending the bracket of $\gg$ and the action of $\gg$ on $V$, then $\phi \in Alt^2_{\epsilon}(V,\gg\oplus V)$ defined by
$$\phi(v,w)=\lbrace v,w \rbrace - \mu(v,w) \qquad \forall v,w\in V$$
is of degree $0$, takes its values in $V$ and satisfies:
\begin{align}
\rho(x)(\phi(v,w))&=\phi(\rho(x)(v),w)+\epsilon(x,v)\phi(v,\rho(x)(w)) \quad &\forall x \in \gg, ~ \forall v,w \in V \label{INTRO phi g-inv}, \\
(\phi(u,v),w)_V&=-\epsilon(u,v)(v,\phi(u,w))_V \quad &\forall u,v,w \in V \label{INTRO phi ()-inv}.
\end{align}
\item \label{INTRO condition b thm 1} Let $\phi \in Alt_{\epsilon}^2(V,V)$ be of degree $0$ and satisfy \eqref{INTRO phi g-inv} and \eqref{INTRO phi ()-inv}. Let $\gt:=\gg\oplus V$, let $B_{\gt}:=B_{\gg}\perp (\phantom{v},\phantom{v})_V$ and let $\lbrace \phantom{v},\phantom{v} \rbrace \in Alt^2_{\epsilon}(\gt,\gt)$ be the unique map which extends the bracket of $\gg$, the action of $\gg$ on $V$ and such that
$$\lbrace v,w\rbrace=\mu(v,w)+\phi(v,w) \qquad \forall v,w \in V.$$
Then the following are equivalent:
\begin{enumerate}[label=\roman*)]
\item $(\gt,B_{\gt},\lbrace \phantom{v},\phantom{v} \rbrace)$ is an $\epsilon$-quadratic colour Lie algebra.
\item $(\mu +\phi)\wedge_{B_{\gt}}(\mu+\phi)=0$.
\end{enumerate}
\end{enumerate}
\end{thm intro}

In the case of Lie algebras and Lie superalgebras, the invariant $(\mu +\phi)\wedge_{B_{\gt}}(\mu+\phi)$, which takes its values in an $\epsilon$-exterior algebra, is the same after quantisation as the invariants of \cite{Kos99}, \cite{Kos01}, \cite{ChenKang2015} which take their values in an $\epsilon$-Clifford algebra. The main tools involved in the proof of Theorem \ref{thm intro colour Lie type} are multilinear algebra over vector spaces graded by an abelian group (Section \ref{section multilinear}) and what we call, by a slight abuse of language, the moment map of an $\epsilon$-orthogonal representation of an $\epsilon$-quadratic colour Lie algebra (Section \ref{section moment map}). Following Kostant's original terminology, a representation satisfying conditions \ref{INTRO condition b thm 1} of Theorem \ref{thm intro colour Lie type} is called of colour Lie type and, if in addition $\phi\equiv 0$, of colour $\mathbb{Z}_2$-Lie type.
\vspace{0.2cm}

In Section \ref{section bianchi}, we give a different interpretation of the data $(\mu,\phi)$ of this theorem in terms of ``curvature tensors'' and show that the condition $(\mu + \phi)\wedge_{B_{\gt}} (\mu + \phi)=0$ is equivalent to a ``second Bianchi identity''. This point of view suggests a natural algebraic condition on $\epsilon$-orthogonal representations which leads to the notion of special $\epsilon$-orthogonal representation. It turns out that there is a link between special $\epsilon$-orthogonal representations and $\epsilon$-orthogonal representations of colour $\mathbb{Z}_2$-Lie type:

\begin{thm intro}
Let $\epsilon$ be a commutation factor of an abelian group $\Gamma$ such that the representation $k^2$ of $\sl(2,k)$ is an $\epsilon$-orthogonal representation with respect to $(\Gamma,\epsilon)$. Let $\gg \rightarrow \so_{\epsilon}(V,(\phantom{v},\phantom{v})_V)$ be a finite-dimensional faithful $\epsilon$-orthogonal representation of a finite-dimensional $\epsilon$-quadratic colour Lie algebra and let $W$ be a finite-dimensional $\Gamma$-graded vector space together with a non-degenerate $\epsilon$-symmetric bilinear form $(\phantom{v},\phantom{v})_W$. Then the $\epsilon$-orthogonal representation
$$\gg\oplus \so_{\epsilon}(W,(\phantom{v},\phantom{v})_W) \rightarrow \so_{\epsilon}(V\otimes W,(\phantom{v},\phantom{v})_{V\otimes W})$$
is of colour $\mathbb{Z}_2$-Lie type if and only if one of the following holds:
\begin{enumerate}[label=\alph*)]
\item $\gg$ is isomorphic to $\so_{\epsilon}(V,(\phantom{v},\phantom{v})_V)$ ;
\item $dim(W)=1$, $(\phantom{v},\phantom{v})_W$ is symmetric and $\gg \rightarrow \so_{\epsilon}(V,(\phantom{v},\phantom{v})_V)$ is of colour $\mathbb{Z}_2$-Lie type ;
\item $dim(W)=2$, $(\phantom{v},\phantom{v})_W$ is antisymmetric and $\gg \rightarrow \so_{\epsilon}(V,(\phantom{v},\phantom{v})_V)$ is special $\epsilon$-orthogonal.
\end{enumerate}
\end{thm intro}

It follows from this theorem that a special $\epsilon$-orthogonal representation $\gg\rightarrow\so_{\epsilon}(V,(\phantom{v},\phantom{v})_V)$ can be extended to a colour Lie algebra of the form
$$\gt=\gg\oplus\sl(2,k)\oplus V\otimes k^2.$$
In this way, special symplectic representations of Lie algebras give rise to Lie algebras, and special orthogonal representations of Lie algebras give rise to Lie superalgebras. The special symplectic representations of Lie algebras considered in \cite{StantonSlupinski15} are special $\epsilon$-orthogonal in this sense, and examples of orthogonal representations of Lie algebras which are special $\epsilon$-orthogonal in this sense are (see chapter $5$ of \cite{MeyerThesis}):
\begin{itemize}
\item[$\bullet$] a one-parameter family $V_{\alpha}$ of $4$-dimensional representations of $\sl(2,k)\oplus\sl(2,k)$ ;
\item[$\bullet$] the $7$-dimensional fundamental representation of a Lie algebra of type $G_2$ ;
\item[$\bullet$] the $8$-dimensional spinor representation of a Lie algebra of type $\so(7)$ where this representation is defined over $k$.
\end{itemize}
The associated Lie superalgebras $\gt$ are respectively exceptional simple Lie superalgebras of type $D(2,1;\alpha), G_3$ and $F_4$. Certain representations considered in \cite{Elduque06} are special $\epsilon$-orthogonal representations if we extend this notion to fields of characteristic three. A link between symplectic triple systems and special symplectic representations of Lie algebras is detailed in \cite{StantonSlupinski15} and a similar link between orthogonal triple systems and special orthogonal representations probably exists (compare \cite{OkuboNoriaki04} and chapter $5$ of \cite{MeyerThesis}).
\vspace{0.1cm}

Finally in Section \ref{section covariants} we study geometric properties of special $\epsilon$-orthogonal representations. It is well-known that the space of binary cubics, a special symplectic representation of $\sl(2,k)$, admits three covariants and that these covariants satisfy remarkable identities (\cite{Eisenstein1844}, \cite{Mathews11}). More generally, special symplectic representations of Lie algebras admit three covariants which are polynomial functions on the representation space and these covariants satisfy generalised Mathews identities (\cite{StantonSlupinski15}). Analogously, we define three covariants of special $\epsilon$-orthogonal representations, one of which is the moment map, and prove corresponding Mathews identities. Note that in order to formulate these identities we have to define a notion of composition for $\epsilon$-alternating multilinear forms extending the composition of symmetric multilinear forms.

\begin{thm intro}
Let $\epsilon$ be a commutation factor of an abelian group $\Gamma$. Let $\rho : \gg \rightarrow \so_{\epsilon}(V,(\phantom{v},\phantom{v}))$ be a finite-dimensional special $\epsilon$-orthogonal representation of a finite-dimensional $\epsilon$-quadratic colour Lie algebra and let $\mu \in Alt^2_{\epsilon}(V,\gg)$, $\psi \in Alt_{\epsilon}^3(V,V)$ and $Q \in Alt_{\epsilon}^4(V)$ be its covariants. We have the following identities:
\begin{alignat*}{4}
a)&\qquad\qquad\qquad\qquad&\mu\wedge_{\rho} \psi&=-\frac{3}{2}Q\wedge_{\times} Id_V \quad &&\text{ in } Alt^5_{\epsilon}(V,V),\\
b)&\qquad\qquad\qquad\qquad&\mu \circ \psi &= 3Q \wedge_{\times} \mu \quad &&\text{ in } Alt^6_{\epsilon}(V,\gg),\\
c)&\qquad\qquad\qquad\qquad&\psi\circ \psi&=-\frac{27}{2} Q\wedge_{\epsilon} Q \wedge_{\times} Id_V \quad &&\text{ in } Alt^9_{\epsilon}(V,V),\\
d)&\qquad\qquad\qquad\qquad& Q\circ \psi&=-54 Q\wedge_{\epsilon} Q \wedge_{\epsilon} Q \quad && \text{ in } Alt^{12}_{\epsilon}(V,k).
\end{alignat*}
\end{thm intro}

\section*{Acknowledgements}

The author wants to express his gratitude to Marcus J. Slupinski for his suggestions, his encouragement and his expert advice.
\vspace{0.5cm}

\textit{Throughout this paper, the field $k$ is of characteristic not two or three.}

\section{Multilinear algebra associated to $\Gamma$-graded vector spaces} \label{section multilinear}

\subsection{Vector spaces and algebras graded by an abelian group}
In this subsection, we give definitions and examples of vector spaces and algebras graded by an abelian group. Let $\Gamma$ be an abelian group.

\begin{df}
A vector space $V$ with a decomposition $V=\bigoplus \limits_{\gamma \in \Gamma} V_{\gamma}$ is said to be $\Gamma$-graded and an element $v\in V_{\gamma}$ is said to be homogeneous.
\end{df}

For an element $v \in V_{\gamma}$ we set $|v|:=\gamma$ and we call $|v|$ the degree of $v$. For convenience, whenever the degree of an element is used in a formula, it is assumed that this element is homogeneous and that we extend by linearity the formula for non-homogeneous elements.

\begin{df}
An algebra $A$ is said to be $\Gamma$-graded if it is $\Gamma$-graded as vector space and $|a\cdot b|=|a|+|b|$ for all homogeneous $a,b$ in $A$.
\end{df}

\begin{ex} \label{rem grad somme et tens} Let $V$ and $W$ be finite-dimensional $\Gamma$-graded vector spaces.
\begin{enumerate}[label=\alph*)]
\item The base field $k$ has a trivial $\Gamma$-gradation given by $|a|=0$ for all $a$ in $k$.
\item The vector space $Hom(V,W)$ is $\Gamma$-graded by $Hom(V,W)=\bigoplus \limits_{\gamma \in \Gamma} Hom(V,W)_{\gamma}$ where
 $$Hom(V,W)_{\gamma}:=\lbrace f\in Hom(V,W) ~ | ~ f(V_a)\subseteq W_{a+\gamma} \quad \forall a \in \Gamma \rbrace.$$
\item The vector space $V\oplus W$ is $\Gamma$-graded by $V\oplus W=\bigoplus \limits_{\gamma \in \Gamma} (V\oplus W)_{\gamma}$ where
$$(V\oplus W)_{\gamma}=V_{\gamma}\oplus W_{\gamma}.$$
\item The vector space $V\otimes W$ is $\Gamma$-graded by $V\otimes W=\bigoplus \limits_{\gamma \in \Gamma} (V\otimes W)_{\gamma}$ where
$$(V\otimes W)_{\gamma}=\bigoplus \limits_{a+b=\gamma} V_{a}\otimes W_{b}.$$
\item As we have seen, the vector space $End(V)=Hom(V,V)$ is $\Gamma$-graded. In fact, the associative algebra $End(V)$ is also $\Gamma$-graded as an algebra.
\item Let $A$ and $B$ be $\Gamma$-graded algebras. As we have seen, the vector space $A\oplus B$ is $\Gamma$-graded. In fact, the algebra $A\oplus B$ is also $\Gamma$-graded as an algebra.
\end{enumerate}
\end{ex}

\subsection{Commutation factors and representations of the symmetric group} \label{Section CLA bicharacters}

Let $\Gamma$ be an abelian group. In this subsection we introduce the notion of a commutation factor of $\Gamma$. This allows us to define a notion of ``commutative'' and ``anticommutative'' for $\Gamma$-graded algebras which takes into account the $\Gamma$-gradation. It also allows us to modify the standard actions of the symmetric group $S_n$ on the $n$-fold tensor product of a $\Gamma$-graded vector space.

\begin{df} (See III.116 in \cite{Bourbaki70}) Let $\Gamma$ be an abelian group. A commutation factor $\epsilon$ of $\Gamma$ is a map $\epsilon : \Gamma \times \Gamma \rightarrow k^*$ such that for all $a,b,c \in \Gamma$
\begin{align*}
\epsilon(a,b)\epsilon(b,a)&=1,\\
\epsilon(a+b,c)&=\epsilon(a,c)\epsilon(b,c),\\
\epsilon(a,b+c)&=\epsilon(a,b)\epsilon(a,c).
\end{align*}
\end{df}

The basic features of commutation factors are given in the following remark (\cite{Scheunert79}).

\begin{rem} \label{decolouration 1}
\begin{enumerate}[label=\alph*)]
\item We have
$$\epsilon(a,0)=\epsilon(0,a)=1, \qquad \epsilon(a,-b)=\epsilon(b,a) \qquad \forall a,b \in \Gamma.$$
\item For $a \in \Gamma$, we have $\epsilon(a,a)=\pm 1$ and hence a partition $\Gamma=\Gamma_0 \cup \Gamma_1$ where
$$\Gamma_0:=\lbrace a \in \Gamma ~ | ~ \epsilon(a,a)=1 \rbrace, \qquad \Gamma_1:=\lbrace a \in \Gamma ~ | ~ \epsilon(a,a)=-1 \rbrace.$$
The map $a \mapsto \epsilon(a,a)\in \mathbb{Z}_2$ is a group homomorphism so $\Gamma_0$ is a normal subgroup of index at most two.
\end{enumerate}
\end{rem}

Here are some examples of non-trivial commutation factors.

\begin{ex} \label{first examples antisym bichar}
\begin{enumerate}[label=\alph*)]
\item The most important non-trivial example of a commutation factor is obtained by taking $\Gamma=\mathbb{Z}_2$ and $\epsilon$ defined by 
$$\epsilon(a,b):=(-1)^{ab} \qquad \forall a,b\in \mathbb{Z}_2.$$
\item Let $\Gamma$ be an abelian group together with commutation factor $\epsilon$. Then, $\tilde{\epsilon} : (\mathbb{Z}\times \Gamma) \times (\mathbb{Z}\times \Gamma) \rightarrow k^*$ given by
$$\tilde{\epsilon}((m,\gamma),(m',\gamma')):=(-1)^{mm'}\epsilon(\gamma,\gamma') \quad \forall m,m' \in \mathbb{Z}, ~ \forall \gamma,\gamma' \in \Gamma$$
is a commutation factor of $\mathbb{Z}\times \Gamma$ and the same formula defines a commutation factor of $\mathbb{Z}_2\times \Gamma$.
\end{enumerate}
\end{ex}

In \cite{Scheunert79}, Scheunert gives the general form of a commutation factor of a finitely generated abelian group in terms of a cyclic decomposition.

\begin{notation} Let $V$ be a $\Gamma$-graded vector space, let $\epsilon$ be a commutation factor of $\Gamma$ and let $v,w\in V$. For brevity, we denote $\epsilon(|v|,|w|)$ by $\epsilon(v,w)$ and by $\mathcal{E}$ the canonical linear map $\mathcal{E} : V\rightarrow V$ given by
$$\mathcal{E}(v):=\epsilon (v,v)v \qquad \forall v\in V.$$
\end{notation}

It-is known that if $V$ is a $\Gamma$-graded vector space, then using a commutation factor one can modify the standard actions of the symmetric group $S_n$ on $V^{\otimes n}$ to take into account the $\Gamma$-grading of $V$. We will need the following result.

\begin{pp} (See \cite{Scheunert83}) \label{pp action}
Let $V$ be a $\Gamma$-graded vector space and let $\epsilon$ be a commutation factor of $\Gamma$. There is a unique right group action $\pi : S_n \rightarrow GL(V^{\otimes n})$ of the permutation group $S_n$ on $V^{\otimes n}$ such that the action of a transposition $\tau_{i,i+1} \in S_n$ is given by
\begin{equation*}
\pi(\tau_{i,i+1})(v_1\otimes \ldots \otimes v_n)=-\epsilon(v_i,v_{i+1}) v_1 \otimes \ldots \otimes v_{i+1}\otimes v_i \otimes \ldots \otimes v_n
\end{equation*}
for all $v_1,\ldots,v_n \in V$. For an arbitrary element $\sigma \in S_n$, this action is given by
$$\pi(\sigma)(v_1\otimes \ldots \otimes v_n)=p(\sigma ; v_1,\ldots, v_n) v_{\sigma(1)}\otimes \ldots \otimes v_{\sigma(n)}$$ 
where
$$p(\sigma ; v_1,\ldots, v_n)=sgn(\sigma) \displaystyle \prod_{\substack{1 \leq i < j \leq n \\ \sigma^{-1}(i)>\sigma^{-1}(j) }} \epsilon(v_i,v_j).$$
\end{pp}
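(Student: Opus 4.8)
The plan is to exploit the Coxeter presentation of $S_n$ by adjacent transpositions: this reduces both existence and uniqueness to a check on the generators $\tau_{i,i+1}$, after which the closed formula for $p(\sigma;v_1,\ldots,v_n)$ is recovered by induction on length. Uniqueness is immediate, since the $\tau_{1,2},\ldots,\tau_{n-1,n}$ generate $S_n$: any right action is completely determined by the prescribed operators $\pi(\tau_{i,i+1})$, so at most one such action can exist.

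For existence I would define $T_i\in GL(V^{\otimes n})$ by the stated formula on simple tensors, extended linearly, and verify that the $T_i$ satisfy the defining relations of $S_n$, namely $s_i^2=e$, $s_is_j=s_js_i$ for $|i-j|\ge 2$, and the braid relation $s_is_{i+1}s_i=s_{i+1}s_is_{i+1}$. The relation $T_i^2=\mathrm{Id}$ (which also shows each $T_i$ is invertible) follows by applying the transposition formula twice and using $\epsilon(v_i,v_{i+1})\epsilon(v_{i+1},v_i)=1$; the commutation $T_iT_j=T_jT_i$ for $|i-j|\ge 2$ is clear because the two operators act on disjoint tensor slots while their scalar factors, lying in $k^*$, commute. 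The one substantial point is the braid relation: writing $a,b,c$ for the three affected factors, a direct computation gives
$$T_iT_{i+1}T_i(\cdots a\otimes b\otimes c\cdots)=-\epsilon(a,b)\epsilon(a,c)\epsilon(b,c)\,(\cdots c\otimes b\otimes a\cdots),$$
and the identical scalar is produced by $T_{i+1}T_iT_{i+1}$, so the two coincide. Since these three relations are palindromic and each $T_i$ is an involution, they hold equally in $GL(V^{\otimes n})^{\mathrm{op}}$, and hence the assignment $\tau_{i,i+1}\mapsto T_i$ extends to an anti-homomorphism $\pi:S_n\to GL(V^{\otimes n})$, that is, to a right group action.

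To obtain the explicit formula I would argue by induction on the length $\ell(\sigma)$, the number of inversions of $\sigma$. Fixing a reduced expression and applying the generator formula one factor at a time, each step multiplies by an adjacent transposition that raises the length by one, contributing a single sign $-1$ together with a single factor $\epsilon(v_i,v_j)$ attached to the newly created inversion (the lower original index on the left). Accumulating these over the whole reduced word reproduces $sgn(\sigma)$ together with the product $\prod_{i<j,\;\sigma^{-1}(i)>\sigma^{-1}(j)}\epsilon(v_i,v_j)$, which is exactly $p(\sigma;v_1,\ldots,v_n)$; here I would invoke the standard fact that the letters of any reduced word are in bijection with the inversions of $\sigma$, each pair being crossed exactly once. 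Independence of the chosen reduced word requires no separate argument, since $\pi$ is already well defined by the previous step.

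The main obstacle is the braid relation, the only relation whose verification is not essentially formal and the place where one must confirm that the sign $-1$ in the transposition formula and the values of $\epsilon$ conspire correctly; note that only $\epsilon(a,b)\epsilon(b,a)=1$ is needed there, the full bimultiplicativity of the commutation factor entering only as the ambient structure. Once this identity is in hand, both the group-action statement and the inductive derivation of the formula for $p$ reduce to routine bookkeeping with commuting scalars in $k^*$.
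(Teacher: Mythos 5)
Your proof is correct, and it is worth noting that the paper itself gives no argument for this proposition: it is quoted from Scheunert, and the closest the paper comes to a proof is Remark~\ref{rem formule de p}, which records that $p$ is a right multiplier, i.e.\ $p(\sigma\sigma';v_1,\ldots,v_n)=p(\sigma';v_{\sigma(1)},\ldots,v_{\sigma(n)})\,p(\sigma;v_1,\ldots,v_n)$. The route implicit in that remark (and in the cited source) is to take the closed formula for $p$ as the definition, verify the multiplier identity directly by matching the inversion pairs of $\sigma\sigma'$ against those of $\sigma$ and $\sigma'$, and obtain the right action and the formula in one stroke. You instead go through the Coxeter presentation: uniqueness from generation by adjacent transpositions; existence by checking $T_i^2=\mathrm{Id}$, the commutation for $|i-j|\geq 2$, and the braid relation, whose common scalar $-\epsilon(a,b)\epsilon(a,c)\epsilon(b,c)$ you compute correctly on both sides; and the closed formula afterwards by induction on length, using the standard bijection between letters of a reduced word and inversions. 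The subtlety of getting an \emph{anti}-homomorphism (hence a right action) rather than a left one out of the presentation is real, and your palindromicity-plus-involutivity argument handles it correctly; your observation that no separate well-definedness check over reduced words is needed is also right. One small misattribution: the parenthetical that ``only $\epsilon(a,b)\epsilon(b,a)=1$ is needed'' at the braid relation is misplaced — the braid relation holds for \emph{any} scalar-valued function of pairs of degrees, since both sides accumulate the same three factors; it is the involutivity $T_i^2=\mathrm{Id}$ that uses $\epsilon(a,b)\epsilon(b,a)=1$. This does not affect the argument. In comparing the two strategies: the multiplier route yields the explicit formula from the outset and is what underlies the paper's later bookkeeping with $p$; your route confines the only genuinely computational step to three adjacent tensor slots, at the modest cost of a subsequent induction to recover $p$.
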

\vspace{0.1cm}

\begin{rem} \label{rem formule de p}
\begin{enumerate}[label=\alph*)]
\item For $v_1,\ldots,v_n \in V$ and $\sigma,\sigma' \in S_n$, we have
\begin{align*}
p(\sigma\sigma' ; v_1,\ldots, v_n)&=p(\sigma' ; v_{\sigma(1)},\ldots, v_{\sigma(n)})p(\sigma ; v_1,\ldots, v_n),\\
p(Id ; v_1,\ldots,v_n)&=1.
\end{align*}
This shows that $p$ is a right multiplier in the sense of \cite{Bruhat56}.
\item Let $n,m\in\mathbb{N}$ such that $n\leq m$ and let $i\in \llbracket 0,m-n \rrbracket$. There is a group inclusion $S_n \subseteq S_m$ where $S_n$ acts only on the coordinates $\llbracket i+1,i+n \rrbracket$ and furthermore (with the obvious notations)
$$p_n(\sigma ; v_{i+1},\ldots ,v_{i+n})=p_{m}(\sigma ; v_1,\ldots ,v_{m}) \qquad \forall \sigma \in S_n, ~ \forall v_1,\ldots,v_{m}\in V.$$
\end{enumerate}
\end{rem}

\subsection{$\epsilon$-symmetric and $\epsilon$-antisymmetric bilinear maps} \label{section symmetric bilinear forms}

Let $\Gamma$ be an abelian group and let $\epsilon$ be a commutation factor of $\Gamma$. In this subsection we define $\epsilon$-symmetric and $\epsilon$-antisymmetric bilinear maps for $\Gamma$-graded vector spaces.

\begin{df}
Let $V$ and $W$ be $\Gamma$-graded vector spaces and let $B : V\times V \rightarrow W$ be a bilinear map.
\begin{enumerate}[label=\alph*)]
\item We say that $B$ is $\epsilon$-symmetric if $B(v,w)=\epsilon(v,w)B(w,v)$ for all $v,w\in V$.
\item We say that $B$ is $\epsilon$-antisymmetric if $B(v,w)=-\epsilon(v,w)B(w,v)$ for all $v,w\in V$.
\end{enumerate}
\end{df}

\begin{rem}
If $W=k$, unless otherwise stated, we always assume that $B$ is of degree $0$.
\end{rem}

It turns out that an $\epsilon$-symmetric bilinear form is the orthogonal sum of a symmetric bilinear form and an antisymmetric bilinear form in the following sense.

\begin{pp} \label{pp V=V0+V1} (see \cite{ChenKang16})
Let $V$ be a $\Gamma$-graded vector space and let $(\phantom{v},\phantom{v}) : V\times V \rightarrow k$ be an $\epsilon$-symmetric bilinear form. Let $V=V_0\oplus V_1$ where
$$V_0=\lbrace v \in V ~ | ~ \epsilon(v,v)=1 \rbrace, \qquad V_1=\lbrace v \in V ~ | ~ \epsilon(v,v)=-1 \rbrace.$$
Then, the restriction of $(\phantom{v},\phantom{v})$ to $V_0$ is symmetric in the usual sense, the restriction of $(\phantom{v},\phantom{v})$ to $V_1$ is antisymmetric in the usual sense, and $V_0$ is orthogonal to $V_1$.
\end{pp}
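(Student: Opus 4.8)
The plan is to reduce everything to homogeneous elements and to exploit that $(\phantom{v},\phantom{v})$ has degree $0$, so that it pairs $V_{\gamma}$ with $V_{-\gamma}$ and vanishes on $V_{\gamma}\times V_{\delta}$ as soon as $\gamma+\delta\neq 0$. First I would use Remark \ref{decolouration 1} to identify the sets $V_0$ and $V_1$ with the graded subspaces $V_0=\bigoplus_{\gamma\in\Gamma_0}V_{\gamma}$ and $V_1=\bigoplus_{\gamma\in\Gamma_1}V_{\gamma}$, and to record that $\Gamma_0$ is a subgroup (of index at most two) with $\Gamma_0\cap\Gamma_1=\emptyset$; in particular $-\gamma\in\Gamma_0$ if and only if $\gamma\in\Gamma_0$, a fact that will drive the whole argument.

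The one computation I would carry out is the following. For homogeneous $v,w$ with $|v|=\gamma$, the degree-$0$ hypothesis makes $(v,w)$ vanish unless $|w|=-\gamma$, so I may assume $|w|=-\gamma$. Then $\epsilon$-symmetry gives $(v,w)=\epsilon(v,w)(w,v)=\epsilon(\gamma,-\gamma)(w,v)$, and applying the identity $\epsilon(a,-b)=\epsilon(b,a)$ of Remark \ref{decolouration 1} with $a=b=\gamma$ turns this into $(v,w)=\epsilon(\gamma,\gamma)(w,v)$. Thus on each complementary pair of components the form is symmetric when $\gamma\in\Gamma_0$ and antisymmetric when $\gamma\in\Gamma_1$.

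From here the three assertions are a matter of bookkeeping the degrees. For the restriction to $V_0$, a nonzero value $(v,w)$ with $v,w\in V_0$ forces $|v|=\gamma\in\Gamma_0$ and $|w|=-\gamma\in\Gamma_0$, and $\epsilon(\gamma,\gamma)=1$ yields $(v,w)=(w,v)$; symmetrically on $V_1$ one gets $\epsilon(\gamma,\gamma)=-1$ and hence $(v,w)=-(w,v)$ (which, the characteristic being not two, is automatically alternating). For $V_0\perp V_1$, a nonzero pairing of $v\in V_0$ with $w\in V_1$ would require $|w|=-|v|\in\Gamma_0$, contradicting $|w|\in\Gamma_1$. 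I expect no genuine obstacle in this proof: the only points needing care are the reduction to the homogeneous case and the rewriting of $\epsilon(\gamma,-\gamma)$ as $\epsilon(\gamma,\gamma)$, after which the subgroup property of $\Gamma_0$ does the rest.
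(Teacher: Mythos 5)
Your proof is correct. The paper itself gives no argument for this proposition (it simply defers to \cite{ChenKang16}), and your route is the natural one: you correctly identify the two essential ingredients, namely the standing degree-$0$ convention for $k$-valued forms (without which the statement is false, since e.g.\ an $\epsilon$-symmetric pairing between $V_{\gamma}$ and $V_{\delta}$ with $\gamma+\delta\neq 0$ need not vanish) and the rewriting $\epsilon(\gamma,-\gamma)=\epsilon(\gamma,\gamma)$ from Remark \ref{decolouration 1}, with the subgroup property of $\Gamma_0$ ensuring that the relevant dual components $V_{-\gamma}$ stay inside $V_0$ (resp.\ $V_1$); the reduction to homogeneous elements and the final bookkeeping are all sound.
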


\subsection{$\epsilon$-alternating multilinear maps}

Let $\Gamma$ be an abelian group and let $\epsilon$ be a commutation factor of $\Gamma$. In this subsection we define $\epsilon$-alternating maps for $\Gamma$-graded vector spaces and the product and composition of $\epsilon$-alternating maps.
\vspace{0.1cm}

Let $V$ and $W$ be $\Gamma$-graded vector spaces. Let $\tilde{\pi} : S_n \rightarrow GL(Hom(V^{\otimes n},W))$ be the left group action given by
$$(\tilde{\pi}(\sigma)(f))(\bar{v}):=f(\pi(\sigma)(\bar{v})) \qquad \forall \sigma \in S_n, ~ \forall f \in Hom(V^{\otimes n},W), ~ \forall \bar{v} \in V^{\otimes n}$$
where $\pi : S_n \rightarrow GL(V^{\otimes n})$ is the right group action given by Proposition \ref{pp action}.

\begin{df}
Let $V$ and $W$ be finite-dimensional $\Gamma$-graded vector spaces. We define the $\Gamma$-graded vector space $Alt^i_{\epsilon}(V,W)$ by
$$Alt^i_{\epsilon}(V,W):=\lbrace f \in Hom(V^{\otimes i},W) ~ | ~ \tilde{\pi}(\sigma)(f)=f \quad \forall \sigma \in S_n \rbrace.$$
We also define the $\mathbb{Z}\times\Gamma$-graded vector space
$$Alt_{\epsilon}(V,W):=\bigoplus \limits_{i\in \mathbb{N}} Alt^i_{\epsilon}(V,W)$$
and denote $Alt_{\epsilon}(V,k)$ by $Alt_{\epsilon}(V)$.
\end{df}

\begin{rem} \label{rem def equivalente alternating map}
Each $f\in Alt^i_{\epsilon}(V,W)$ is uniquely determined by the multilinear map $\tilde{f} : V^i \rightarrow W$ given by $\tilde{f}(v_1,\ldots,v_i)=f(v_1\otimes \ldots \otimes v_i)$ where $v_1,\ldots,v_i \in V$. This map $\tilde{f}$ satisfies
$$\tilde{f}(v_1,\ldots,v_{l},v_{l+1},\ldots,v_i)=-\epsilon(v_l,v_{l+1})\tilde{f}(v_1,\ldots,v_{l+1},v_l,\ldots,v_i)$$
for all $v_1,\ldots,v_i \in V$ and $l\in \llbracket 1,i-1 \rrbracket$. Conversely given a multilinear map $g : V^i\rightarrow W$ with this property there is a unique element $f\in Alt^i_{\epsilon}(V,W)$ such that $\tilde{f}=g$.
\end{rem}

\begin{df}
Let $I:=\llbracket 1,n\rrbracket$ and let $I_1,\ldots,I_m$ be disjoint subsets of $I$ such that $\bigcup \limits_{i\in \llbracket 1,m \rrbracket} I_i=I$. We denote by $S(I_1,\ldots,I_m)$ the set of all permutations $\sigma \in S_n$ which satisfy
$$\forall i \in \llbracket 1,m\rrbracket, ~ a,b\in I_i \text{ and } a<b \quad \Rightarrow \quad \sigma(a)<\sigma(b).$$
Such a permutation is called a shuffle permutation.
\end{df}

\begin{rem} \label{rem shuffle}
\begin{enumerate}[label=\alph*)]
\item The cardinal of $S(I_1,\ldots,I_m)$ is given by
$$|S(I_1,\ldots,I_m)|=\dbinom{n}{|I_1|}\dbinom{n-|I_1|}{|I_2|}\ldots \dbinom{n-|I_1|-\ldots -|I_{m-2}|}{|I_{m-1}|}.$$
\item There are group inclusions $S_{|I_i|} \subseteq S_{n}$, for all $i \in \llbracket 1,m \rrbracket$, where $S_{|I_i|}$ acts only on the coordinates
$$\left\{  \sum \limits_{k\in \llbracket 1,i-1 \rrbracket} |I_k|+1 , \sum \limits_{k\in \llbracket 1,i-1 \rrbracket} |I_k| +2,\ldots,  \sum \limits_{k\in \llbracket 1,i \rrbracket} |I_k| \right\} .$$
Furthermore, for each $\sigma \in S_n$, there exist unique $\sigma' \in S(I_1,\ldots,I_m)$ and $\sigma_i \in S_{|I_i|} ~ \forall i \in \llbracket 1,m \rrbracket$ such that
$$\sigma=\sigma'\circ \sigma_1 \circ \ldots \circ \sigma_m.$$
\end{enumerate}
\end{rem}
\vspace{0.1cm}

Let $T,U,V$ and $W$ be finite-dimensional $\Gamma$-graded vector spaces, let $f \in Alt^i_{\epsilon}(T,U)$, let $g \in Alt^j_{\epsilon}(T,V)$ and let $\phi : U\times V \rightarrow W$ be a bilinear map. Let $(fg)_{\phi} : T^{\otimes (i+j)} \rightarrow W$ be given by
$$(fg)_{\phi}(v_1\otimes\ldots\otimes v_{i+j}):=\phi(f(v_1\otimes\ldots\otimes v_i),g(v_{i+1}\otimes\ldots\otimes v_{i+j})) \quad \forall v_1,\ldots,v_{i+j}\in T.$$
The exterior product of $f$ and $g$ is now defined by ``antisymmetrising'' this product only with respect to shuffle permutations.

\begin{df} \label{df exterior product}
With the notation above, the map $f\wedge_{\phi} g : T^{\otimes (i+j)} \rightarrow W$ is defined by
$$f\wedge_{\phi} g:=\sum \limits_{\sigma \in S(\llbracket1,i\rrbracket,\llbracket i+1,i+j \rrbracket)} \tilde{\pi}(\sigma)((fg)_{\phi}).$$
\end{df}

\begin{rem}
The relation between this exterior product and ``antisymmetrisation'' over all permutations is:
$$\sum \limits_{\sigma \in S_{i+j}} \tilde{\pi}(\sigma)((fg)_{\phi})=i!j!f\wedge_{\phi} g.$$
In characteristic zero the two possible definitions of an ``exterior product'' are equivalent.
\end{rem}

\begin{pp} \label{pp product of mult forms}
The map $f\wedge_{\phi} g$ is in $Alt^{i+j}_{\epsilon}(T,W).$
\end{pp}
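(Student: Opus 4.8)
The plan is to exhibit $f\wedge_{\phi} g$ as a sum over a complete set of representatives for the left cosets of the Young subgroup $H:=S_i\times S_j\subseteq S_{i+j}$ (where the factors act on the coordinates $\llbracket 1,i\rrbracket$ and $\llbracket i+1,i+j\rrbracket$ respectively), to show that the summand $(fg)_{\phi}$ is $H$-invariant, and then to deduce full $S_{i+j}$-invariance from the fact that left translation permutes these cosets. This route avoids dividing by $i!\,j!$ and so works in arbitrary characteristic, which is essential here.

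First I would check that $(fg)_{\phi}$ is invariant under $\tilde{\pi}(h)$ for every $h\in H$. Writing $h=\sigma_1\sigma_2$ with $\sigma_1\in S_i$ acting on $\llbracket 1,i\rrbracket$ and $\sigma_2\in S_j$ acting on $\llbracket i+1,i+j\rrbracket$, Remark \ref{rem formule de p} b) gives $p_{i+j}(\sigma_1;v_1,\ldots,v_{i+j})=p_i(\sigma_1;v_1,\ldots,v_i)$, so $\pi(\sigma_1)$ only reshuffles the first $i$ tensor slots, producing exactly the factor that the hypothesis $\tilde{\pi}(\sigma_1)(f)=f$ absorbs; hence $\tilde{\pi}(\sigma_1)((fg)_{\phi})=(fg)_{\phi}$, and symmetrically $\tilde{\pi}(\sigma_2)((fg)_{\phi})=(fg)_{\phi}$ using $\tilde{\pi}(\sigma_2)(g)=g$. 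Since $\tilde{\pi}$ is a left action and $H$ is generated by such $\sigma_1,\sigma_2$, this yields $\tilde{\pi}(h)((fg)_{\phi})=(fg)_{\phi}$ for all $h\in H$.

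Next I would record two consequences of Remark \ref{rem shuffle} b). That remark states that each $\sigma\in S_{i+j}$ factors uniquely as $\sigma=\sigma'\sigma_1\sigma_2$ with $\sigma'\in S(\llbracket 1,i\rrbracket,\llbracket i+1,i+j\rrbracket)$ and $\sigma_1\sigma_2\in H$; hence the shuffles form a complete set of representatives for the left cosets $\sigma' H$. Combined with the $H$-invariance of $(fg)_{\phi}$, this shows that for \emph{any} complete set $R$ of left coset representatives one has $\sum_{\rho\in R}\tilde{\pi}(\rho)((fg)_{\phi})=\sum_{\sigma'\in S(\llbracket 1,i\rrbracket,\llbracket i+1,i+j\rrbracket)}\tilde{\pi}(\sigma')((fg)_{\phi})=f\wedge_{\phi} g$, independently of the chosen representatives, because $\tilde{\pi}(\sigma' h)((fg)_{\phi})=\tilde{\pi}(\sigma')\bigl(\tilde{\pi}(h)((fg)_{\phi})\bigr)=\tilde{\pi}(\sigma')((fg)_{\phi})$.

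Finally, to prove $\tilde{\pi}(\tau)(f\wedge_{\phi} g)=f\wedge_{\phi} g$ for all $\tau\in S_{i+j}$, I would note that left multiplication by $\tau$ carries the complete set of representatives $R=S(\llbracket 1,i\rrbracket,\llbracket i+1,i+j\rrbracket)$ to another complete set $\tau R$, and compute $\tilde{\pi}(\tau)(f\wedge_{\phi} g)=\sum_{\sigma'\in R}\tilde{\pi}(\tau\sigma')((fg)_{\phi})=\sum_{\rho\in\tau R}\tilde{\pi}(\rho)((fg)_{\phi})=f\wedge_{\phi} g$ by the representative-independence just established. I expect the only delicate point to be the bookkeeping in the second step: translating the hypotheses $\tilde{\pi}(\sigma_1)(f)=f$ and $\tilde{\pi}(\sigma_2)(g)=g$ into invariance of $(fg)_{\phi}$ requires the multiplier identities of Remark \ref{rem formule de p} to confirm that the commutation factors produced by $\pi(\sigma_1)$ and $\pi(\sigma_2)$ depend only on the relevant block of variables and match precisely those absorbed by $f$ and $g$ separately.
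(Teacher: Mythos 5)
Your proof is correct, and it takes a genuinely different route from the paper's. The paper follows Cartan's classical computation: by Remark \ref{rem def equivalente alternating map} it suffices to check the sign identity for an adjacent transposition $\tau=(l,l+1)$, and this is verified term by term over the shuffles, splitting them into those for which $\sigma^{-1}(l)$ and $\sigma^{-1}(l+1)$ lie in the same block --- where the factor $-\epsilon(v_l,v_{l+1})$ is absorbed by the $\epsilon$-alternating property of $f$ or of $g$ --- and those separating the pair, which are matched up in pairs $\{\sigma,\tau\sigma\}$ whose contributions combine correctly. Your argument repackages the same two ingredients (the restriction property of the multiplier, Remark \ref{rem formule de p} b), and the unique factorisation $\sigma=\sigma'\circ\sigma_1\circ\sigma_2$, Remark \ref{rem shuffle} b)) into a structural statement: the shuffles are a transversal of the left cosets of the Young subgroup $H=S_i\times S_j$, the summand $(fg)_{\phi}$ is $\tilde{\pi}(H)$-invariant precisely because $f$ and $g$ are $\epsilon$-alternating, hence the sum is independent of the choice of transversal and therefore invariant under $\tilde{\pi}(\tau)$ for every $\tau\in S_{i+j}$, since $\tau R$ is again a transversal. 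What each approach buys: yours yields invariance under all of $S_{i+j}$ in one stroke, with no case analysis, and it transfers verbatim to the exterior composition of Definition \ref{df exterior composition} (for which the paper only says the proof is similar), with $H$ replaced by $S_j\times\cdots\times S_j$; there, notably, your scheme uses only the alternating property of $g$. The paper's proof, in exchange, is more elementary and self-contained, mirroring the ungraded case. Both arguments, like Definition \ref{df exterior product} itself, avoid dividing by $i!\,j!$; your remark that this matters in positive characteristic applies equally to the paper's proof, so it is not a point of difference.
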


\begin{proof}
This proof is based on the proof of the analogous result for classical exterior forms in \cite{Cartan67}. Here, we use implicitly the formulae of Remark \ref{rem formule de p}.
\vspace{0.2cm}

To prove the proposition it is sufficient to show that if $l\in \llbracket1,i+j-1\rrbracket$,
$$f\wedge_{\phi} g(v_1\otimes\ldots\otimes v_l\otimes v_{l+1}\otimes \ldots\otimes v_{i+j})=-\epsilon(v_l,v_{l+1})f\wedge_{\phi} g(v_1\otimes\ldots\otimes v_{l+1}\otimes v_l\otimes \ldots\otimes v_{i+j})$$
for all $v_1,\ldots,v_{i+j}$ in $T$. Consider the right-hand side of this equation. We have
\begin{align*}
&-\epsilon(v_l,v_{l+1})f\wedge_{\phi} g(v_1\otimes\ldots\otimes v_{l+1}\otimes v_l\otimes\ldots\otimes v_{i+j})\\
&=-\epsilon(v_l,v_{l+1})\sum \limits_{\sigma \in S(\llbracket1,i\rrbracket,\llbracket i+1,i+j \rrbracket)} (fg)_{\phi}\left(\pi(\sigma)(v_1\otimes \ldots \otimes v_{l+1}\otimes v_l \otimes \ldots \otimes v_{i+j})\right).
\end{align*}
We divide the permutations $\sigma \in S(\llbracket1,i\rrbracket,\llbracket i+1,i+j \rrbracket)$ in two categories:
\vspace{0.2cm}

1. Those $\sigma$ for which $\sigma^{-1}(l)$ and $\sigma^{-1}(l+1)$ are both integers $\leq i$ or both $\geq i+1$. In the first case, $v_l$ and $v_{l+1}$ occur amongst the first i places in $f(v_{\sigma(1)}\otimes \ldots\otimes v_{\sigma(i)})$ and hence, since $f$ is $\epsilon$-alternating, we have
$$-\epsilon(v_l,v_{l+1}) (fg)_{\phi}\left(\pi(\sigma)(v_1\otimes \ldots \otimes v_{l+1}\otimes v_l \otimes \ldots \otimes v_{i+j})\right)=(fg)_{\phi}\left(\pi(\sigma)(v_1\otimes \ldots \otimes v_{i+j})\right).$$
In the second case, we have the same relation since $g$ is $\epsilon$-alternating.
\vspace{0.2cm}

2. The second category is itself divided into two sub-categories: those $\sigma$ for which $\sigma^{-1}(l)\leq i$ and $\sigma^{-1}(l+1)\geq i+1$ and those $\sigma$ for which $\sigma^{-1}(l)\geq i$ and $\sigma^{-1}(l+1)\leq i+1$. Let $\tau$ be the transposition which interchanges $l$ and $l+1$. If $\sigma$ is in the first sub-category, $\tau\sigma$ is in the second, and vice versa. We may therefore group in pairs the remaining terms as follows: for each $\sigma$ such that $\sigma^{-1}(l)\leq i$ and $\sigma^{-1}(l+1)\geq i+1$  we have
\begin{align*}
&-\epsilon(v_l,v_{l+1}) \Big( (fg)_{\phi}\left(\pi(\sigma)(v_1\otimes \ldots \otimes v_{l+1}\otimes v_l \otimes \ldots \otimes v_{i+j})\right) \\
&+ (fg)_{\phi}\left(\pi(\tau\sigma)(v_1\otimes \ldots \otimes v_{l+1}\otimes v_l \otimes \ldots \otimes v_{i+j})\right) \Big) \\
&= (fg)_{\phi}\left(\pi(\sigma)(v_1\otimes \ldots \otimes v_{i+j})\right) + (fg)_{\phi}\left(\pi(\tau\sigma)(v_1\otimes \ldots \otimes v_{i+j})\right).
\end{align*}
\end{proof}

The most important example of the above construction is when $U=V=W=k$ and $\phi : k\times k \rightarrow k$ is the product. In this case we denote $\wedge_{\phi}$ by $\wedge_{\epsilon}$ and this defines a product on $Alt_{\epsilon}(V)$. One can show that:

\begin{pp}
With respect to $\wedge_{\epsilon}$, the algebra $Alt_{\epsilon}(V)$ is $\mathbb{Z}\times\Gamma$-graded, $\tilde{\epsilon}$-commutative (see Example \ref{first examples antisym bichar}) and associative.
\end{pp}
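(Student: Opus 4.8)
The plan is to verify the three assertions in turn, regarding $Alt_{\epsilon}(V)$ as graded by $\mathbb{Z}\times\Gamma$, where an element $f\in Alt^i_{\epsilon}(V)$ which is $\Gamma$-homogeneous of degree $a$ (for the grading of $Hom(V^{\otimes i},k)$ from Example \ref{rem grad somme et tens}) is assigned the bidegree $(i,a)$.

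First, the grading. The decomposition $Alt_{\epsilon}(V)=\bigoplus_i Alt^i_{\epsilon}(V)$ supplies the $\mathbb{Z}$-grading, and each summand is $\Gamma$-graded as a subspace of $Hom(V^{\otimes i},k)$. By Proposition \ref{pp product of mult forms} we already have $f\wedge_{\epsilon}g\in Alt^{i+j}_{\epsilon}(V)$, so the $\mathbb{Z}$-degree is additive. For the $\Gamma$-degree I would note that the product $\phi$ on $k$ is of degree $0$, so $(fg)_{\phi}$ is $\Gamma$-homogeneous of degree $a+b$ when $f,g$ have $\Gamma$-degrees $a,b$, and each operator $\tilde{\pi}(\sigma)$ preserves the $\Gamma$-grading of $Hom(V^{\otimes(i+j)},k)$; hence $f\wedge_{\epsilon}g$ is $\Gamma$-homogeneous of degree $a+b$. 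Thus $\wedge_{\epsilon}$ respects the bigrading and $Alt_{\epsilon}(V)$ is a $\mathbb{Z}\times\Gamma$-graded algebra.

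Second, $\tilde{\epsilon}$-commutativity. I would reduce to homogeneous $f,g$ and introduce the block transposition $c\in S_{i+j}$ defined by $c(k)=j+k$ for $1\le k\le i$ and $c(k)=k-i$ for $i+1\le k\le i+j$. Since $\phi$ is commutative, a direct computation gives $\tilde{\pi}(c)((fg)_{\phi})=p(c;\,\cdot\,)\,(gf)_{\phi}$, and evaluating $p(c;\,\cdot\,)$ via Proposition \ref{pp action} shows that its sign is $(-1)^{ij}$ while its $\epsilon$-contribution, coming precisely from the inversions of $c$, is the $\epsilon$-factor of the degrees of the two blocks on the support where $f$ and $g$ are nonzero. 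Antisymmetrising and then reindexing the shuffle sum by the bijection $\sigma\mapsto\sigma c$ from $S(\llbracket 1,j\rrbracket,\llbracket j+1,i+j\rrbracket)$ onto $S(\llbracket 1,i\rrbracket,\llbracket i+1,i+j\rrbracket)$ (right multiplication by $c$ carries one shuffle set bijectively onto the other) then yields the commutation relation between $g\wedge_{\epsilon}f$ and $f\wedge_{\epsilon}g$ with the factor prescribed by $\tilde{\epsilon}$ of Example \ref{first examples antisym bichar}, which is exactly $\tilde{\epsilon}$-commutativity. The delicate point here is to keep the degree conventions consistent so that the $\epsilon$-factor produced by the inversions of $c$, read off on the support of $f$ and $g$, matches the prescribed commutation factor.

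Third, associativity. For $f\in Alt^i_{\epsilon}(V)$, $g\in Alt^j_{\epsilon}(V)$, $h\in Alt^l_{\epsilon}(V)$, the strategy is to show that both $(f\wedge_{\epsilon}g)\wedge_{\epsilon}h$ and $f\wedge_{\epsilon}(g\wedge_{\epsilon}h)$ equal the single three-block sum $\sum_{\rho\in S(I_1,I_2,I_3)}\tilde{\pi}(\rho)((fgh)_{\phi})$, where $I_1,I_2,I_3$ are the consecutive blocks of sizes $i,j,l$ and $(fgh)_{\phi}$ is the unambiguous triple product, well defined because $\phi$ is associative. The two inputs I would use are: the unique factorisation of permutations into a shuffle times within-block permutations (Remark \ref{rem shuffle}), which gives a bijection $S(I_1\cup I_2,I_3)\times S(I_1,I_2)\xrightarrow{\sim}S(I_1,I_2,I_3)$ (its being a bijection is confirmed by the multinomial identity on cardinalities), and the multiplier and inclusion properties of $p$ (Remark \ref{rem formule de p}), which guarantee both that $\tilde{\pi}$ composes as $\tilde{\pi}(\tau)\tilde{\pi}(\sigma)=\tilde{\pi}(\tau\sigma)$ and that applying a shuffle $\sigma\in S(I_1,I_2)$ (embedded in $S_{i+j+l}$ fixing $I_3$) to $(fg)_{\phi}$ and then pairing with $h$ reproduces $\tilde{\pi}(\sigma)((fgh)_{\phi})$. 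Expanding $(f\wedge_{\epsilon}g)\wedge_{\epsilon}h$ and collecting the nested shuffles through this bijection gives the three-block sum, and the symmetric argument for $f\wedge_{\epsilon}(g\wedge_{\epsilon}h)$ uses $S(I_1,I_2\cup I_3)\times S(I_2,I_3)\xrightarrow{\sim}S(I_1,I_2,I_3)$ together with the same properties.

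I expect the main obstacle to be the associativity step, specifically checking that the nested shuffle sums reindex exactly onto $S(I_1,I_2,I_3)$ with the $p$-factors combining correctly via the inclusion compatibility of Remark \ref{rem formule de p}; the sign-tracking in the commutativity computation is the second most delicate point.
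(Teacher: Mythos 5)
Your proposal is correct, and in fact there is nothing in the paper to compare it against: the proposition is stated after ``One can show that'' with no proof given, so your argument supplies the missing standard verification. All three steps are consistent with the paper's definitions. For the grading, bidegree additivity follows, as you say, from Proposition \ref{pp product of mult forms} together with the observation that each $\tilde{\pi}(\sigma)$ preserves the $\Gamma$-grading of $Hom(V^{\otimes (i+j)},k)$, since $\pi(\sigma)$ only permutes tensor factors up to a scalar. For associativity, your scheme works exactly as planned: the offset parameter in Remark \ref{rem formule de p}~b) is what lets you embed $S(I_2,I_3)$ acting on the last $j+l$ coordinates for the second parenthesisation, and both composition maps $S(I_1\cup I_2,I_3)\times S(I_1,I_2)\rightarrow S(I_1,I_2,I_3)$ and $S(I_1,I_2\cup I_3)\times S(I_2,I_3)\rightarrow S(I_1,I_2,I_3)$ are indeed bijections (surjectivity is direct, and injectivity follows from the uniqueness in Remark \ref{rem shuffle}~b) or from the multinomial count you invoke). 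The one point you rightly flag as delicate deserves to be settled explicitly. With the grading of Example \ref{rem grad somme et tens}, a homogeneous $f\in Alt^i_{\epsilon}(V)$ of $\Gamma$-degree $a$ is supported on $(V^{\otimes i})_{-a}$; your block transposition $c$ has precisely the inversions $(k,l)$ with $k\leq j<l$, so on the support one finds $p(c;v_{\sigma})=(-1)^{ij}\epsilon(-b,-a)=(-1)^{ij}\epsilon(b,a)$, and the reindexed sum gives $f\wedge_{\epsilon}g=(-1)^{ij}\epsilon(b,a)\,g\wedge_{\epsilon}f$. That is, the commutation factor comes out as $\tilde{\epsilon}$ evaluated in the order $(g,f)$; since $\epsilon(a,b)\epsilon(b,a)=1$, for a non-symmetric $\epsilon$ this differs from the convention $xy=\tilde{\epsilon}(x,y)\,yx$ by replacing $\tilde{\epsilon}$ with its inverse. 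The two conventions coincide whenever $\epsilon(a,b)=\epsilon(b,a)$ on the degrees involved --- in particular in the $\mathbb{Z}_2$ and super cases, and for forms of $\Gamma$-degree $0$, which is the paper's standing assumption for $k$-valued bilinear forms and holds for all the covariants actually used ($Q$, $N_{B_{\tilde{\gg}}}(\mu)$, etc.) --- so this is a convention check rather than a gap, and your computation is the one that proves the statement in the form the paper needs.
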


Later on, we will need the following definition.

\begin{df} \label{def norm}
Let $T,U,W$ be finite-dimensional $\Gamma$-graded vector spaces and $\phi : U\times U \rightarrow W$ a bilinear map. The norm $N_{\phi}(f)$ of $f \in Alt_{\epsilon}(T,U)$ is defined by
$$N_{\phi}(f):=f\wedge_{\phi} f.$$
This is an element of $Alt^{2i}_{\epsilon}(T,W)$ if $f\in Alt^i_{\epsilon}(T,U)$.
\end{df}
\vspace{0.2cm}

Finally to complete this subsection, we define a composition of $\epsilon$-alternating multilinear maps. Let $U,V$ and $W$ be finite-dimensional $\Gamma$-graded vector spaces, let $f \in Alt^i_{\epsilon}(U,V)$ and let $g \in Alt^j_{\epsilon}(W,U)$. We first define $f*g : W^{\otimes(ij)} \rightarrow V$ by
$$f*g(v_1,\ldots,v_{ij}):=f(g(v_1\otimes\ldots\otimes v_j)\otimes g(v_{j+1}\otimes\ldots\otimes v_{2j})\otimes\ldots\otimes g(v_{(i-1)j+1}\otimes\ldots\otimes v_{ij}))$$
for all $v_1\otimes\ldots\otimes v_{ij} \in W^{\otimes (ij)}$.
\vspace{0.1cm}

The exterior composition of $f$ and $g$ is now defined by ``antisymmetrising'' this composition only with respect to certain shuffle permutations.

\begin{df} \label{df exterior composition}
With the notation above, the map $f\circ g : W^{\otimes(ij)} \rightarrow V$ is defined by
$$f\circ g:=\sum \limits_{\sigma \in S(\llbracket1,j\rrbracket,\ldots,\llbracket (i-1)j+1,ij \rrbracket)} \tilde{\pi}(\sigma)(f*g).$$
\end{df}

\begin{rem}
The relation between this and antisymmetrising over all permutations is
$$\sum \limits_{\sigma \in S_{ij}} \tilde{\pi}(\sigma)(f*g)=(j!)^i f\circ g.$$
\end{rem}

\begin{pp}
The map $f\circ g$ is in to $Alt^{ij}_{\epsilon}(W,V).$
\end{pp}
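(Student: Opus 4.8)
The plan is to prove directly that $f\circ g$ is invariant under the action $\tilde\pi$ of every adjacent transposition $\tau_{l,l+1}\in S_{ij}$; since these transpositions generate $S_{ij}$ and $\tilde\pi$ is a group homomorphism, this is equivalent to $f\circ g\in Alt^{ij}_{\epsilon}(W,V)$ (equivalently, one checks the sign rule of Remark \ref{rem def equivalente alternating map}). I would deliberately avoid the shortcut of passing through the full antisymmetrisation $\sum_{\sigma\in S_{ij}}\tilde\pi(\sigma)(f*g)=(j!)^i\,f\circ g$ and dividing by $(j!)^i$, since $k$ may have positive characteristic and $(j!)^i$ need not be invertible; this is precisely why the composition was defined by summing over shuffles only.

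Write $I_k:=\llbracket (k-1)j+1,kj\rrbracket$ for $k\in\llbracket 1,i\rrbracket$ and let $H:=\prod_{k=1}^i S_{I_k}\subseteq S_{ij}$ be the subgroup of permutations preserving each block. The first key step is that $f*g$ is $\tilde\pi$-invariant under $H$, i.e. $\tilde\pi(h)(f*g)=f*g$ for all $h\in H$. Indeed, for $h\in S_{I_k}$ the permutation $\pi(h)$ only rearranges the $j$ arguments fed to the $k$-th copy of $g$, and by Remark \ref{rem formule de p} b) the associated multiplier $p$ coincides with the one computed inside $S_j$ on those coordinates; since $g\in Alt^j_{\epsilon}(W,U)$ is invariant under that $S_j$-action, the $k$-th factor $g(\ldots)$ is unchanged and hence so is $f*g$. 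As the $S_{I_k}$ act on disjoint coordinates and generate $H$, invariance under all of $H$ follows. Note that only $g$ being $\epsilon$-alternating is used here.

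The combinatorial core is then a reindexing. By Remark \ref{rem shuffle} b) the shuffle set $\mathrm{Sh}:=S(I_1,\ldots,I_i)$ is a transversal of the left cosets $S_{ij}/H$, each coset containing a unique shuffle; write $\mathrm{sh}(\sigma)$ for the shuffle representative of $\sigma H$. Left multiplication by $\tau:=\tau_{l,l+1}$ permutes the left cosets, so $\sigma\mapsto\mathrm{sh}(\tau\sigma)$ is a bijection of $\mathrm{Sh}$ onto itself (if $\mathrm{sh}(\tau\sigma_1)=\mathrm{sh}(\tau\sigma_2)$ then $\sigma_1H=\sigma_2H$, forcing $\sigma_1=\sigma_2$). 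Using that $\tilde\pi$ is a left action together with $H$-invariance, for each $\sigma\in\mathrm{Sh}$ I write $\tau\sigma=\mathrm{sh}(\tau\sigma)\,h_\sigma$ with $h_\sigma\in H$ and get $\tilde\pi(\tau\sigma)(f*g)=\tilde\pi(\mathrm{sh}(\tau\sigma))\tilde\pi(h_\sigma)(f*g)=\tilde\pi(\mathrm{sh}(\tau\sigma))(f*g)$. Summing over $\mathrm{Sh}$ and using the bijection gives
$$\tilde\pi(\tau)(f\circ g)=\sum_{\sigma\in\mathrm{Sh}}\tilde\pi(\tau\sigma)(f*g)=\sum_{\sigma\in\mathrm{Sh}}\tilde\pi(\mathrm{sh}(\tau\sigma))(f*g)=\sum_{\sigma'\in\mathrm{Sh}}\tilde\pi(\sigma')(f*g)=f\circ g,$$
which is exactly the desired invariance.

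The only genuinely delicate point is the $H$-invariance of $f*g$, where one must check that the sign multiplier $p$ produced by $\pi(h)$ is precisely the one absorbed by the $\epsilon$-alternating property of $g$; Remark \ref{rem formule de p} b) is exactly the compatibility that makes this work, and everything else is formal once $\tilde\pi$ is known to be a group action (Proposition \ref{pp action}). Alternatively, one could mirror the Cartan-style pairing argument used above for $f\wedge_{\phi}g$, splitting the shuffles according to whether $\sigma^{-1}(l)$ and $\sigma^{-1}(l+1)$ lie in the same block (the corresponding term being fixed by $g$ being $\epsilon$-alternating) or in different blocks (the remaining terms being grouped in pairs by $\tau$); the transversal argument above is simply a more compact repackaging of that bookkeeping.
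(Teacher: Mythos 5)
Your proof is correct, and it takes a genuinely different route from the paper's. The paper disposes of this proposition in one line (``similar to the proof of Proposition~\ref{pp product of mult forms}''), i.e.\ by the Cartan-style check that an adjacent swap of arguments $v_l,v_{l+1}$ produces the factor $-\epsilon(v_l,v_{l+1})$: one splits the shuffles into those for which $\sigma^{-1}(l)$ and $\sigma^{-1}(l+1)$ lie in the same block $I_k$ (these positions are then consecutive inside $I_k$ because shuffles preserve the order of the interval, so the sign is absorbed by $g$ being $\epsilon$-alternating) and those lying in different blocks, which are grouped in pairs $\sigma,\tau\sigma$. Your argument replaces this term-by-term bookkeeping with the structural fact that summing an $H$-invariant element over a transversal of $G/H$ yields a $G$-invariant element: Remark~\ref{rem shuffle}~b) says precisely that $S(I_1,\ldots,I_i)$ is a transversal of the left cosets of $H=\prod_k S_{I_k}$, your first step (correctly resting on Remark~\ref{rem formule de p}~b) for the compatibility of the multipliers $p_j$ and $p_{ij}$) gives the $H$-invariance of $f*g$, and the rest is formal from $\tilde\pi$ being a left action. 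Each approach buys something: the paper's pairing is elementary and entirely explicit; yours proves invariance under \emph{every} $\sigma\in S_{ij}$ uniformly (so your reduction to adjacent transpositions, while harmless, is not even needed), it isolates exactly where alternation enters --- only $g$'s, so the conclusion in fact holds for an arbitrary multilinear $f$, in contrast with the wedge product where $H=S_i\times S_j$ and both alternations are used --- and, like the paper's argument, it is characteristic-free, your point about the non-invertibility of $(j!)^i$ being exactly why the paper sums over shuffles rather than over all of $S_{ij}$.
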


\begin{proof}
The proof is similar to the proof of Proposition \ref{pp product of mult forms}.
\end{proof}

\subsection{Colour Lie algebras} \label{Section CLA}

Let $\Gamma$ be an abelian group and let $\epsilon$ be a commutation factor of $\Gamma$. In this subsection following \cite{Ree60}, \cite{RittenbergWyler78-2}, \cite{RittenbergWyler78-1}, \cite{Scheunert79} we define colour Lie algebras.

\begin{df} \label{def CLA}
A colour Lie algebra is a $\Gamma$-graded vector space $\gg=\bigoplus \limits_{\gamma \in \Gamma} \gg_{\gamma}$ together with a bilinear map $\lbrace \phantom{x}, \phantom{x} \rbrace : \gg \times \gg \rightarrow \gg $ such that 
\begin{enumerate}[label=\alph*)]
\item $\lbrace \gg_{\alpha}, \gg_{\beta} \rbrace \subseteq \gg_{\alpha+\beta}$ for all $\alpha,\beta \in \Gamma$,
\item $\lbrace x,y \rbrace=-\epsilon(x,y)\lbrace y,x \rbrace$ for all $x,y \in \gg \qquad$ ($\epsilon$-antisymmetry),
\item $\epsilon(z,x)\lbrace x , \lbrace y , z \rbrace \rbrace+\epsilon(x,y)\lbrace y , \lbrace z , x \rbrace \rbrace+\epsilon(y,z)\lbrace z , \lbrace x , y \rbrace \rbrace=0$ for all $x,y,z \in \gg \qquad$

($\epsilon$-Jacobi identity).
\end{enumerate}
\end{df}

Lie (super)algebras are colour Lie algebras, here are some examples:

\begin{ex}
Let $V$ be a finite-dimensional $\Gamma$-graded vector space.
\begin{enumerate}[label=\alph*)]
\item The associative $\Gamma$-graded algebra $End(V)$ is a colour Lie algebra for the bracket $\lbrace a,b \rbrace := ab-\epsilon(a,b)ba$ for all $a,b$ in $End(V)$ and is denoted $\gl_{\epsilon}(V)$.
\item Let $(\phantom{v},\phantom{v}) : V \times V \rightarrow k$ be an $\epsilon$-symmetric bilinear form. We set
$$\so_{\epsilon}(V,(\phantom{v},\phantom{v})):=\bigoplus \limits_{\gamma \in \Gamma} \so_{\epsilon}(V,(\phantom{v},\phantom{v}))_{\gamma}$$
where
$$\so_{\epsilon}(V,(\phantom{v},\phantom{v}))_{\gamma}:=\lbrace f \in End(V)_{\gamma} ~ | ~ (f(v),w)+\epsilon(f,v)(v,f(w))=0 \quad  \forall v,w \in V \rbrace.$$
One can show that $\so_{\epsilon}(V,(\phantom{v},\phantom{v}))$ is stable under the bracket of $\gl_{\epsilon}(V)$ and hence is a colour Lie algebra.
\end{enumerate}
\end{ex}

We now define morphisms and representations of colour Lie algebras.

\begin{df}
\begin{enumerate}[label=\alph*)]
\item Let $\gg$ and $\gg'$ be colour Lie algebras. A degree $0$ linear map $f \in Hom(\gg, \gg')$ is a morphism of colour Lie algebras if $f(\lbrace x,y \rbrace)=\lbrace f(x),f(y) \rbrace$ for all $x,y \in \gg$. Furthermore, we say that $\gg$ and $\gg'$ are isomorphic colour Lie algebras if $f$ is a linear isomorphism.
\item Let $\gg$ be a colour Lie algebra. A finite-dimensional representation $V$ of $\gg$ is a finite-dimensional $\Gamma$-graded vector space $V$ together with a morphism of colour Lie algebras $\rho : \gg \rightarrow \gl_{\epsilon}(V)$. We sometimes write $x(v)$ instead of $\rho(x)(v)$ for $x \in \gg$ and $v \in V$.
\item Let $\gg$ be a colour Lie algebra. A finite-dimensional $\epsilon$-orthogonal representation $V$ of $\gg$ is a finite-dimensional $\Gamma$-graded vector space $V$ together with a non-degenerate $\epsilon$-symmetric bilinear form $(\phantom{v},\phantom{v})$ and a morphism of colour Lie algebras $\rho : \gg \rightarrow \so_{\epsilon}(V,(\phantom{v},\phantom{v}))$.
\end{enumerate}
\end{df}

\begin{ex} \label{ex rep osp o et sp}
Orthosymplectic representations of Lie superalgebras are examples of $\epsilon$-orthogonal representations, where $\Gamma=\mathbb{Z}_2$ and $\epsilon(a,b)=(-1)^{ab}$ for all $a,b \in \mathbb{Z}_2$. In particular both orthogonal and symplectic representations of Lie algebras are examples of $\epsilon$-orthogonal representations.
\end{ex}

\subsection{$\epsilon$-symmetric bilinear forms on colour Lie algebras} \label{Subsection symmetric bilinear form on CLA}

Let $\Gamma$ be an abelian group and let $\epsilon$ be a commutation factor of $\Gamma$. In this subsection we define invariant $\epsilon$-symmetric bilinear forms for colour Lie algebras. We show that the ``$\epsilon$-trace'' in the fundamental representation defines an invariant, $\epsilon$-symmetric and non-degenerate bilinear form on $\gl_{\epsilon}$ and $\so_{\epsilon}$.

\begin{df} \label{def quadratic}
\begin{enumerate}[label=\alph*)]
\item Let $\gg$ be a colour Lie algebra. A bilinear form $B : \gg \times \gg \rightarrow k$ is $ad$-invariant if for all $x,y,z \in \gg$,
$$B(\lbrace x,y \rbrace , z)=-\epsilon(x,y)B(y,\lbrace x,z \rbrace).$$
\item An $\epsilon$-quadratic colour Lie algebra is a colour Lie algebra together with a bilinear form which is $\epsilon$-symmetric, $ad$-invariant and non-degenerate.
\item (see \cite{Scheunert83})
Let $V$ be a finite-dimensional $\Gamma$-graded vector space and let $f\in End(V)$. Recall that $\mathcal{E} : V \rightarrow V$ is defined by $\mathcal{E}(v)=\epsilon(v,v)v$ for all $v\in V$. The $\epsilon$-trace of $f$ is defined by:
$$Tr_{\epsilon}(f):=Tr(\mathcal{E}\circ f).$$
\end{enumerate}
\end{df}

\begin{pp} \label{gl and so are quadratic}
Let $V$ be a finite-dimensional $\Gamma$-graded vector space.
\begin{enumerate}[label=\alph*)]
\item \label{gl is quadratic} The bilinear form $\gl_{\epsilon}(V)\times \gl_{\epsilon}(V)\rightarrow k$ given by
\begin{equation} \label{bilinear form on gl}
Tr_{\epsilon}(fg) \qquad \forall f,g \in \gl_{\epsilon}(V)
\end{equation}
is $ad$-invariant, $\epsilon$-symmetric and non-degenerate.
\item \label{so is quadratic} Let $(\phantom{v},\phantom{v})$ be a non-degenerate $\epsilon$-symmetric bilinear form on $V$ and suppose that $dim(V)\geq 2$. Then the restriction of \eqref{bilinear form on gl} to $\so_{\epsilon}(V,(\phantom{v},\phantom{v}))$ is non-degenerate.
\end{enumerate}
\end{pp}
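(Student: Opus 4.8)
The plan is to reduce everything to one structural fact about the $\epsilon$-trace, its $\epsilon$-cyclicity: for homogeneous $f,g\in\gl_\epsilon(V)$,
$$Tr_\epsilon(fg)=\epsilon(f,g)\,Tr_\epsilon(gf).$$
To prove this I would fix a homogeneous basis $(e_i)$ of $V$ and write $Tr_\epsilon(fg)=\sum_i \epsilon(e_i,e_i)(fg)_{ii}=\sum_{i,j}\epsilon(e_i,e_i)f_{ij}g_{ji}$, where $f(e_j)=\sum_i f_{ij}e_i$. A term is nonzero only when $|e_i|=|e_j|+|f|$ and $|e_j|=|e_i|+|g|$, which forces $|f|+|g|=0$ (consistent with $Tr_\epsilon$ vanishing off degree $0$); on such terms the identities of Remark \ref{decolouration 1} give $\epsilon(e_i,e_i)=\epsilon(e_j,e_j)\epsilon(f,f)$ and, since $|g|=-|f|$, also $\epsilon(f,f)=\epsilon(f,g)=\epsilon(g,f)$. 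Relabelling the summation then yields the displayed identity, both sides vanishing when $|f|+|g|\neq 0$.

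Granting this, part a) is essentially formal. The $\epsilon$-symmetry is exactly $\epsilon$-cyclicity. For $ad$-invariance I would expand $Tr_\epsilon(\{x,y\}z)=Tr_\epsilon(xyz)-\epsilon(x,y)Tr_\epsilon(yxz)$ and, on the other side, $-\epsilon(x,y)Tr_\epsilon(y\{x,z\})=-\epsilon(x,y)Tr_\epsilon(yxz)+\epsilon(x,y)\epsilon(x,z)Tr_\epsilon(yzx)$; applying $\epsilon$-cyclicity to move $x$ to the front, and using $\epsilon(x,y)\epsilon(y,x)=\epsilon(x,z)\epsilon(z,x)=1$, turns the last term into $Tr_\epsilon(xyz)$, so the two expressions coincide. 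For non-degeneracy I would use the homogeneous basis $E_{ij}$ of $End(V)$ defined by $E_{ij}(e_l)=\delta_{jl}e_i$ and compute $Tr_\epsilon(E_{ij}E_{kl})=\delta_{jk}\,Tr_\epsilon(E_{il})=\delta_{jk}\delta_{il}\,\epsilon(e_i,e_i)$. Thus $E_{ij}$ pairs nontrivially only with $E_{ji}$, and with nonzero value $\epsilon(e_i,e_i)=\pm1$, so the Gram matrix is invertible and the form is non-degenerate.

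For part b) I would introduce the $\epsilon$-adjoint. For homogeneous $f$, non-degeneracy of $(\phantom{v},\phantom{v})$ lets one define $f^*$ of the same degree by $(f(v),w)=\epsilon(f,v)(v,f^*(w))$ for all $v,w$; by construction $\so_\epsilon(V,(\phantom{v},\phantom{v}))=\{f\mid f^*=-f\}$. Three properties are needed: $*$ is an involution; it is $\epsilon$-anti-multiplicative, $(fg)^*=\epsilon(f,g)\,g^*f^*$ (substitute the defining relation twice and use non-degeneracy); and it preserves the $\epsilon$-trace, $Tr_\epsilon(f^*)=Tr_\epsilon(f)$ (in a homogeneous basis this reduces, in degree $0$, to $f^*=G^{-1}f^{\mathrm t}G$ with $G$ the Gram matrix, together with $\mathcal{E}G=G\mathcal{E}$ and cyclicity of the ordinary trace). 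Since $\mathrm{char}\,k\neq 2$, writing $f=\tfrac12(f-f^*)+\tfrac12(f+f^*)$ gives an orthogonal direct sum $\gl_\epsilon(V)=\so_\epsilon(V,(\phantom{v},\phantom{v}))\oplus\ppp$ into the $(-1)$- and $(+1)$-eigenspaces of $*$, with $\ppp=\{f\mid f^*=f\}$. Orthogonality follows by taking $f^*=-f$, $g^*=g$ and computing $Tr_\epsilon(fg)=Tr_\epsilon((fg)^*)=\epsilon(f,g)Tr_\epsilon(g^*f^*)=-\epsilon(f,g)Tr_\epsilon(gf)=-\epsilon(f,g)\epsilon(g,f)Tr_\epsilon(fg)=-Tr_\epsilon(fg)$, whence $Tr_\epsilon(fg)=0$. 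Then, $B$ being non-degenerate on $\gl_\epsilon(V)$ by part a) and the decomposition being orthogonal, its restriction to $\so_\epsilon$ is non-degenerate: any $f\in\so_\epsilon$ with $B(f,\so_\epsilon)=0$ also satisfies $B(f,\ppp)=0$, hence $B(f,\gl_\epsilon(V))=0$ and $f=0$.

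The routine parts are the formal consequences of $\epsilon$-cyclicity; the real work, and the main obstacle, is the $\epsilon$-bookkeeping in part b): checking that the $\epsilon$-adjoint is well defined of the correct degree and that the factors in $(fg)^*=\epsilon(f,g)g^*f^*$ and $Tr_\epsilon(f^*)=Tr_\epsilon(f)$ come out exactly as stated, since it is precisely the combination of these factors with $\epsilon(f,g)\epsilon(g,f)=1$ that produces the sign $-1$ forcing $2\,Tr_\epsilon(fg)=0$. The hypotheses are used sparingly: $\mathrm{char}\,k\neq 2$ makes both the eigenspace splitting and the passage $2\,Tr_\epsilon(fg)=0\Rightarrow Tr_\epsilon(fg)=0$ legitimate, while $\dim V\geq 2$ serves to exclude the degenerate case where $\so_\epsilon$ reduces to $0$.
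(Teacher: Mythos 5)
Your proposal is correct; part a) matches the paper (which dismisses $ad$-invariance and $\epsilon$-symmetry as ``a straightforward calculation'' and proves non-degeneracy with exactly your elementary-matrix pairing $Tr_{\epsilon}(E_{ij}E_{ji})=\epsilon(e_i,e_i)\neq 0$; your $\epsilon$-cyclicity identity $Tr_{\epsilon}(fg)=\epsilon(f,g)Tr_{\epsilon}(gf)$ is a clean way to organise that calculation), but for part b) you take a genuinely different route. The paper introduces the operators $f(u,v)(w)=\epsilon(v,w)(u,w)v-(v,w)u$ in $\so_{\epsilon}(V,(\phantom{v},\phantom{v}))$ and proves (Lemma \ref{lm canonical moment map}) that $Tr_{\epsilon}(f\circ f(u,v))=-2(f(u),v)$ for every $f\in\so_{\epsilon}(V,(\phantom{v},\phantom{v}))$, so any element of the radical of the restricted form satisfies $(f(u),v)=0$ for all $u,v$ and vanishes by non-degeneracy of $(\phantom{v},\phantom{v})$. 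You instead define the $\epsilon$-adjoint, verify $(fg)^*=\epsilon(f,g)g^*f^*$ and $Tr_{\epsilon}(f^*)=Tr_{\epsilon}(f)$, split $\gl_{\epsilon}(V)=\so_{\epsilon}(V,(\phantom{v},\phantom{v}))\oplus\ppp$ into eigenspaces of $*$ (using $\mathrm{char}(k)\neq 2$), check the splitting is trace-orthogonal --- the sign bookkeeping you flag as the real work does come out right, since $\epsilon(f,g)\epsilon(g,f)=1$ --- and then quote part a). Each approach buys something: the paper's lemma does double duty, being reused immediately in Proposition \ref{pp moment map canonique} to identify the canonical moment map, and it exhibits explicit dual partners inside $\so_{\epsilon}$ itself; your argument is basis-free beyond part a), proves strictly more (the restriction to the $\epsilon$-symmetric part $\ppp$ is also non-degenerate), and never actually uses $\dim(V)\geq 2$, consistent with your closing observation that this hypothesis only excludes a trivial case. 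One cosmetic remark: your reduction $Tr_{\epsilon}(f^*)=Tr_{\epsilon}(f)$ is only sketched, but the three ingredients you name ($f^*=G^{-1}f^{\mathrm{t}}G$ in degree $0$, $\mathcal{E}G=G\mathcal{E}$, and cyclicity of the ordinary trace) are exactly what is needed, with $\mathcal{E}G=G\mathcal{E}$ following from the fact that $a\mapsto\epsilon(a,a)$ is a group homomorphism (Remark \ref{decolouration 1}) together with the degree-$0$ assumption on the form.
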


\begin{proof}
\begin{enumerate}[label=\alph*)]
\item The bilinear form of Equation \eqref{bilinear form on gl} is $ad$-invariant and $\epsilon$-symmetric by a straightforward calculation. It is non-degenerate since in a $\Gamma$-homogeneous basis, the elementary matrices $E_{v_{\gamma},w_{\gamma'}},E_{w_{\gamma'},v_{\gamma}}$ corresponding to homogeneous vectors $v_{\gamma}\in V_{\gamma},w_{\gamma'}\in V_{\gamma'}$ satisfy
$$Tr_{\epsilon}(E_{v_{\gamma},w_{\gamma'}} E_{w_{\gamma'},v_{\gamma}})\neq 0.$$
\item By \ref{gl is quadratic} the bilinear form considered is $\epsilon$-symmetric and $ad$-invariant. We need the following lemma.

\begin{lm} \label{lm canonical moment map}
Let $V$ be a finite-dimensional $\Gamma$-graded vector space such that $dim(V)\geq 2$ together with a non-degenerate $\epsilon$-symmetric bilinear form $(\phantom{v},\phantom{v})$. Let $u,v\in V$. The map $f(u,v) \in \so_{\epsilon}(V,(\phantom{v},\phantom{v}))$ defined by
$$f(u,v)(w):=\epsilon(v,w)(u,w)v-(v,w)u \qquad \forall w\in V$$
satisfies
$$Tr_{\epsilon}(f\circ f(u,v))=-2(f(u),v) \qquad \forall f\in \so_{\epsilon}(V,(\phantom{v},\phantom{v})).$$
\end{lm}

\begin{proof}
Let $u,v \in V$. One can check that $f(u,v) \in \so_{\epsilon}(V,(\phantom{v},\phantom{v}))$. Let $\lbrace e_i :  1\leq i \leq dim(V) \rbrace$ be a homogeneous basis of $V$. Let $e_i,e_j\in \lbrace e_i : 1\leq i \leq dim(V) \rbrace$ and $f \in \so_{\epsilon}(V,(\phantom{v},\phantom{v}))$. Let $\lbrace e^i :  1\leq i \leq dim(V) \rbrace$ be its dual basis in the sense that $(e_i,e^j)=\delta_{ij}$ for all $i,j \in \llbracket 1,dim(V) \rrbracket$. We have $|e^i|=-|e_i|$ and
$$f(e_i,e_j)(e^k)=\epsilon(e_k,e_j)\delta_{ik}e_j-\delta_{jk}e_i.$$
After a straightforward calculation we have
\begin{equation} \label{preuve formule mucan 1}
\frac{(\mathcal{E}(f(\mu(e_i,e_j)(e^k))),e_k)}{\epsilon(e_k,e_k)}=-\delta_{ik}(f(e_k),e_j)-\delta_{jk}(f(e_i),e_k).
\end{equation}
Since
$$\sum\limits_{k=1}^{dim(V)} \frac{(f(e^k),e_k)}{\epsilon(e_k,e_k)}=Tr(f) \qquad \forall f\in \gl_{\epsilon}(V)$$
by Equation \eqref{preuve formule mucan 1} we obtain
$$Tr_{\epsilon}( f\circ f(e_i,e_j))=-2(f(e_i),e_j).$$
\end{proof}

Let $f\in \so_{\epsilon}(V,(\phantom{v},\phantom{v}))$ be such that
$$Tr_{\epsilon}( f\circ g)=0\qquad \forall g \in \so_{\epsilon}(V,(\phantom{v},\phantom{v})).$$
In particular, we have
$$Tr_{\epsilon}( f\circ f(u,v))=0\qquad \forall u,v \in V.$$
By Lemma \ref{lm canonical moment map}, we have
$$(f(u),v) \qquad \forall u,v\in V$$
then, since $(\phantom{v},\phantom{v})$ is non-degenerate, we obtain that $f\equiv 0$ and so the restriction of \eqref{bilinear form on gl} to $\so_{\epsilon}(V,(\phantom{v},\phantom{v}))$ is non-degenerate.
\end{enumerate}
\end{proof}

\section{Moment map of $\epsilon$-orthogonal representations} \label{section moment map}

Let $\Gamma$ be an abelian group and let $\epsilon$ be a commutation factor of $\Gamma$. In this section, we define the moment map of an $\epsilon$-orthogonal representation $(V,(\phantom{v},\phantom{v}))$ of an $\epsilon$-quadratic colour Lie algebra $(\gg,B_{\gg})$. After giving some general properties, we study the moment map of the fundamental representation of $\so_{\epsilon}(V,(\phantom{v},\phantom{v}))$ which we will call the ``canonical'' moment map. Finally, we give a formula for the moment map of a tensor product of $\epsilon$-orthogonal representations in terms of the moment maps of the factors.
\vspace{0.2cm}

Unless otherwise stated, we suppose all $\epsilon$-orthogonal representations of dimension at least two.

\begin{df} \label{df moment map}
Let $\rho : \gg \rightarrow \so_{\epsilon}(V,(\phantom{v},\phantom{v}))$ be a finite-dimensional $\epsilon$-orthogonal representation of a finite-dimensional $\epsilon$-quadratic colour Lie algebra $(\gg,B_{\gg})$. We define the moment map of the representation $\rho : \gg \rightarrow \so_{\epsilon}(V,(\phantom{v},\phantom{v}))$ to be the bilinear map $\mu : V\times V \rightarrow \gg$ given by
$$B_{\gg}(x,\mu(v,w))=(\rho(x)(v),w) \qquad \forall v,w \in V, ~ \forall x \in \gg.$$
\end{df}
\vspace{0.1cm}

This generalises the usual moment map of a symplectic representation of a quadratic Lie algebra. We now show that the moment map is $\epsilon$-antisymmetric and equivariant.
\vspace{0.1cm}

\begin{pp} \label{proprietes de mu} Let $\rho : \gg \rightarrow \so_{\epsilon}(V,(\phantom{v},\phantom{v}))$ be a finite-dimensional $\epsilon$-orthogonal representation of a finite-dimensional $\epsilon$-quadratic colour Lie algebra $(\gg,B_{\gg})$ and let $\mu$ be its moment map. Then:
\begin{enumerate}[label=\alph*)]
\item The map $\mu$ is of degree $0$.
\item We have $\mu \in Alt_{\epsilon}^2(V,\gg)$.
\item For $x \in \gg$ and $v,w \in V$, we have
\begin{equation*}
\lbrace x , \mu (v,w) \rbrace=\mu(x(v),w)+\epsilon(x,v)\mu(v,x(w)).
\end{equation*}
\item Let $\lbrace e_i  :  1\leq i \leq dim(\gg) \rbrace$ be a basis of $\gg$ and let $\lbrace e^i  :  1\leq i \leq dim(\gg) \rbrace$ be the dual basis in the sense that $B_{\gg}(e_i,e^j)=\delta_{ij}$. We have
\begin{equation*}
\mu(v,w)=\sum\limits_{i=1}^{dim(\gg)} (e_i(v),w)e^i \qquad \forall v,w \in V.
\end{equation*}
\end{enumerate}
\end{pp}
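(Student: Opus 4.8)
The plan is to exploit the non-degeneracy of $B_{\gg}$ throughout: since $\mu(v,w)$ is characterised by the scalars $B_{\gg}(x,\mu(v,w))=(\rho(x)(v),w)$ as $x$ ranges over $\gg$, every assertion about $\mu$ reduces to an identity between such pairings, and hence to a computation with the non-degenerate form $(\phantom{v},\phantom{v})$ on $V$. Two of the four statements are immediate from this principle. For d), I would expand an arbitrary element in the dual basis as $z=\sum_i B_{\gg}(e_i,z)e^i$ and substitute $z=\mu(v,w)$ together with the definition of $\mu$. For a), I would decompose $\mu(v,w)$ into its homogeneous components and pair each against a homogeneous $x$: since $B_{\gg}$ and $(\phantom{v},\phantom{v})$ are of degree $0$ and $\rho(x)\in\gl_{\epsilon}(V)_{|x|}$, the right-hand side $(\rho(x)(v),w)$ vanishes unless $|x|+|v|+|w|=0$; non-degeneracy then forces every component of $\mu(v,w)$ except the one of degree $|v|+|w|$ to vanish.

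For b) it suffices, again by non-degeneracy, to check $B_{\gg}(x,\mu(v,w))=-\epsilon(v,w)B_{\gg}(x,\mu(w,v))$ for all $x$, that is, $(\rho(x)(v),w)=-\epsilon(v,w)(\rho(x)(w),v)$. I would obtain this from the defining relation of $\so_{\epsilon}(V,(\phantom{v},\phantom{v}))$ applied to $f=\rho(x)$, namely $(\rho(x)(w),v)=-\epsilon(x,w)(w,\rho(x)(v))$, followed by the $\epsilon$-symmetry of $(\phantom{v},\phantom{v})$ to move $w$ past $\rho(x)(v)$; the accumulated factors collapse to $-\epsilon(w,v)$ by means of $\epsilon(x,w)\epsilon(w,x)=1$ and the additivity of $\epsilon$. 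Combined with a) and Remark \ref{rem def equivalente alternating map}, this yields $\mu\in Alt^2_{\epsilon}(V,\gg)$.

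The main work is c). Here I would pair with an arbitrary $y\in\gg$ and use the $ad$-invariance of $B_{\gg}$ to pull $x$ out of the bracket, giving $B_{\gg}(y,\lbrace x,\mu(v,w)\rbrace)=-\epsilon(y,x)B_{\gg}(\lbrace x,y\rbrace,\mu(v,w))=-\epsilon(y,x)(\rho(\lbrace x,y\rbrace)(v),w)$. Since $\rho$ is a morphism of colour Lie algebras, $\rho(\lbrace x,y\rbrace)=\rho(x)\rho(y)-\epsilon(x,y)\rho(y)\rho(x)$, so the pairing splits into two terms. One term already equals $B_{\gg}(y,\mu(\rho(x)(v),w))=(\rho(y)\rho(x)(v),w)$; in the other I would invoke the $\so_{\epsilon}$-condition on $\rho(x)$ to rewrite $(\rho(x)\rho(y)(v),w)=-\epsilon(x,|y|+|v|)(\rho(y)(v),\rho(x)(w))$ and then recognise the result as $\epsilon(x,v)B_{\gg}(y,\mu(v,\rho(x)(w)))$.

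The delicate point throughout c) is the bookkeeping of commutation factors: the three separate occurrences of $\epsilon$ (from $ad$-invariance, from the morphism property, and from the $\so_{\epsilon}$-condition) must be combined using $\epsilon(a,b)\epsilon(b,a)=1$ and $\epsilon(a+b,c)=\epsilon(a,c)\epsilon(b,c)$ so as to leave exactly the factor $\epsilon(x,v)$ demanded by the statement. Once the two terms are matched, non-degeneracy of $B_{\gg}$ removes the pairing against $y$ and gives the desired identity. This cancellation of $\epsilon$-factors, rather than any structural difficulty, is the only genuine obstacle; the remaining parts are direct consequences of the defining relation and the dual-basis expansion.
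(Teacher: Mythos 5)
Your proposal is correct and follows essentially the same route as the paper: every claim is reduced, via non-degeneracy of $B_{\gg}$, to an identity among pairings $(\rho(x)(v),w)$, which is then verified using the degree-$0$ property of the forms, the defining relation of $\so_{\epsilon}(V,(\phantom{v},\phantom{v}))$, the $\epsilon$-symmetry of $(\phantom{v},\phantom{v})$, the $ad$-invariance of $B_{\gg}$ and the morphism property of $\rho$ -- exactly the chain of manipulations the paper writes out for part $b)$ before declaring $c)$ and $d)$ ``similar''. Your bookkeeping of commutation factors in $b)$ and $c)$ checks out (e.g.\ $\epsilon(y,x)\epsilon(x,y+v)=\epsilon(x,v)$), and your dual-basis expansion for $d)$ is the intended argument.
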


\begin{proof}
The map $\mu$ is of degree $0$ since $B_{\gg}$ and $(\phantom{v},\phantom{v})$ are of degree $0$. This proves $a)$, we now give a proof of $b)$ and the other parts of the proposition can be proved similarly.
\vspace{0.1cm}

For $x \in \gg$ we have
\begin{align*}
B_{\gg}(x,\mu(v,w))&=(\rho(x)(v),w)\\
&=-\epsilon(x,v)(v,\rho(x)(w))\\
&=-\epsilon(x,v)\epsilon(v,x+w)(\rho(x)(w),v)\\
&=-\epsilon(v,w)B_{\gg}(x,\mu(w,v)).
\end{align*}
\end{proof}

\begin{rem} \label{modification de mu par une cst}
Let $\rho : \gg \rightarrow \so_{\epsilon}(V,(\phantom{v},\phantom{v}))$ be a finite-dimensional $\epsilon$-orthogonal representation of a finite-dimensional $\epsilon$-quadratic colour Lie algebra $(\gg,B_{\gg})$ and let $\mu$ be its moment map. Let $\alpha, \beta \in k^*$. Then $\gg$ is also $\epsilon$-quadratic for the bilinear form $\alpha \cdot B_{\gg}$ and $\rho : \gg \rightarrow \so_{\epsilon}(V,\beta \cdot (\phantom{v},\phantom{v}))$ is also an $\epsilon$-orthogonal representation of $\gg$. The corresponding moment map $\mu_{\alpha,\beta}$ satisfies
$$\mu_{\alpha,\beta}(v,w)=\frac{\beta}{\alpha}\mu(v,w) \qquad \forall v,w\in V.$$
\end{rem}
\vspace{0.1cm}

We now study the moment map of the fundamental representation of $\so_{\epsilon}(V,(\phantom{v},\phantom{v}))$ which we call the ``canonical'' moment map.
\vspace{0.1cm}

\begin{pp} \label{pp moment map canonique}
Let $V$ be a finite-dimensional $\Gamma$-graded vector space together with a non-degenerate $\epsilon$-symmetric bilinear form $(\phantom{v},\phantom{v})$. Consider the $\epsilon$-orthogonal representation of the $\epsilon$-quadratic colour Lie algebra $(\so_{\epsilon}(V,(\phantom{v},\phantom{v})),B)$ where $B(f,g):=-\frac{1}{2}Tr_{\epsilon}(fg)$ for all $f,g \in \so_{\epsilon}(V,(\phantom{v},\phantom{v}))$. Then, the corresponding moment map $\mu_{can}$ satisfies
\begin{equation} \label{canonical moment map}
\mu_{can}(u,v)(w)=\epsilon(v,w)(u,w)v-(v,w)u \qquad \forall u,v,w\in V.
\end{equation}
\end{pp}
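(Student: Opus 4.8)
The plan is to recognise that the formula proposed for $\mu_{can}$ is precisely the map appearing in Lemma \ref{lm canonical moment map}, so that the proposition becomes an immediate corollary of that lemma together with the defining property of the moment map. First I would recall that, by Definition \ref{df moment map}, the moment map of the fundamental representation (which acts by $\rho(f)(u)=f(u)$) is the unique bilinear map $\mu_{can} : V\times V \to \so_{\epsilon}(V,(\phantom{v},\phantom{v}))$ satisfying
$$B(f,\mu_{can}(u,v))=(f(u),v) \qquad \forall f\in \so_{\epsilon}(V,(\phantom{v},\phantom{v})),\ \forall u,v\in V.$$
Uniqueness here is guaranteed because $B(f,g)=-\tfrac{1}{2}Tr_{\epsilon}(fg)$ is non-degenerate on $\so_{\epsilon}(V,(\phantom{v},\phantom{v}))$ by Proposition \ref{gl and so are quadratic}\ref{so is quadratic} (using the standing assumption $dim(V)\geq 2$). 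Thus it is enough to check that the candidate map satisfies this single relation.

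Next I would simply substitute the candidate into $B$. The map $\mu_{can}(u,v)$ defined by $\mu_{can}(u,v)(w)=\epsilon(v,w)(u,w)v-(v,w)u$ is exactly the element $f(u,v)$ of Lemma \ref{lm canonical moment map}; that lemma both confirms $\mu_{can}(u,v)\in \so_{\epsilon}(V,(\phantom{v},\phantom{v}))$ (so the candidate lands in the right space) and gives $Tr_{\epsilon}(f\circ \mu_{can}(u,v))=-2(f(u),v)$ for every $f\in \so_{\epsilon}(V,(\phantom{v},\phantom{v}))$. Feeding this into the definition of $B$ yields
$$B(f,\mu_{can}(u,v))=-\tfrac{1}{2}Tr_{\epsilon}(f\circ \mu_{can}(u,v))=(f(u),v),$$
which is precisely the defining relation. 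By uniqueness, the candidate is the moment map, proving the proposition.

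The essential point is that all the genuine computation has already been carried out in the proof of Lemma \ref{lm canonical moment map}, where $Tr_{\epsilon}$ is expanded in a homogeneous dual basis and the $\epsilon$-skew-symmetry of elements of $\so_{\epsilon}(V,(\phantom{v},\phantom{v}))$ is used; the proposition is then a formal consequence, so I do not anticipate any real obstacle. The only care needed is bookkeeping the normalisation $-\tfrac{1}{2}$ in $B$ so that it cancels the factor $-2$ supplied by the lemma and produces $(f(u),v)$ with coefficient exactly $1$, as required by Definition \ref{df moment map}.
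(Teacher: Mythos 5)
Your proposal is correct and follows exactly the paper's own route: the paper's proof of Proposition \ref{pp moment map canonique} likewise identifies $\mu_{can}(u,v)$ with the map $f(u,v)$ of Lemma \ref{lm canonical moment map}, using the trace identity $Tr_{\epsilon}(f\circ f(u,v))=-2(f(u),v)$ together with the normalisation $B=-\tfrac{1}{2}Tr_{\epsilon}$ and the non-degeneracy of $B$ established in Proposition \ref{gl and so are quadratic}. You merely make explicit the uniqueness argument that the paper leaves implicit, which is a fair expansion rather than a different approach.
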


\begin{proof}
By Lemma \ref{lm canonical moment map} we have
$$\mu_{can}(u,v)=f(u,v) \qquad \forall u,v\in V$$
and so
$$\mu_{can}(u,v)(w)=\epsilon(v,w)(u,w)v-(v,w)u \qquad \forall u,v,w \in V.$$
\end{proof}

We now calculate $\mu_{can}$ for the standard symplectic plane.
\vspace{0.1cm}

\begin{ex} \label{sl2 moment map}
Let $\lbrace p,q \rbrace$ be the canonical basis of $k^2$ and let $\omega$ be the symplectic form on $k^2$ defined by $\omega(p,q)=1$. The Lie algebra $\sp(k^2,\omega)$ is just $\sl(2,k)$ and $(k^2,\omega)$ is an $\epsilon$-orthogonal representation, where $\Gamma=\mathbb{Z}_2$, $k^2$ is $\mathbb{Z}_2$-graded by $(k^2)_0=\lbrace 0 \rbrace$, $(k^2)_1=k^2$ and $\epsilon(a,b)=(-1)^{ab}$ for all $a,b \in \mathbb{Z}_2$. The Lie algebra $\sl(2,k)$ is quadratic with respect to the form $\frac{1}{2}Tr(XY)$ for all $X,Y \in\sl(2,k)$ and the canonical moment map is given by
$$\mu_{can}(p,p)=\begin{pmatrix}
0 & -2 \\
0 & 0
\end{pmatrix}, \quad \mu_{can}(q,q)=\begin{pmatrix}
0 & 0 \\
2 & 0
\end{pmatrix}, \quad \mu_{can}(p,q)=\begin{pmatrix}
1 & 0 \\
0 & -1
\end{pmatrix}.$$
\end{ex}
\vspace{0.1cm}

\begin{rem} \label{rem mucan iso}
In fact, with the appropriate definition of the $\epsilon$-exterior algebra of a $\Gamma$-graded vector space together with a commutation factor, $\mu_{can}$ factors through an equivariant isomorphism of $\Lambda^2_{\epsilon}(V)$ with $\so_{\epsilon}(V,(\phantom{v},\phantom{v}))$ (see \cite{ChenKang16} if $char(k)=0$ or chapter 3 of \cite{MeyerThesis}). This generalises the fact that if $(V,\omega)$ is a symplectic vector space, then $S^2(V)$ is isomorphic to $\sp(V,\omega)$. It also includes the fact that if $(V,(\phantom{v},\phantom{v}))$ is a quadratic vector space, then $\Lambda^2(V)$ is isomorphic to $\so(V,(\phantom{v},\phantom{v}))$.
\end{rem}
\vspace{0.1cm}

We now give a formula for the moment map of a tensor product of $\epsilon$-orthogonal representations in terms of the moment maps of the factors.
\vspace{0.2cm}

Let $(\gg,B_{\gg})$ and $(\hh,B_{\hh})$ be finite-dimensional $\epsilon$-quadratic colour Lie algebras with respect to $(\Gamma,\epsilon)$ and let
$$\rho_{\gg} : \gg \rightarrow \so_{\epsilon}(V,(\phantom{v},\phantom{v})_V),\qquad \rho_{\hh} : \hh \rightarrow \so_{\epsilon}(W,(\phantom{v},\phantom{v})_W)$$
be finite-dimensional $\epsilon$-orthogonal representations. We denote by $\mu_V \in Alt_{\epsilon}^2(V,\gg)$ and $\mu_W \in Alt_{\epsilon}^2(W,\hh)$ the corresponding moment maps.
\vspace{0.2cm}

On the vector space $V\otimes W$ we define the bilinear form
$$(v\otimes w,v'\otimes w')_{V\otimes W}:=\epsilon(w,v')(v,v')_V (w,w')_W \qquad \forall v\otimes w,v'\otimes w' \in V\otimes W.$$

\begin{pp}
With the notation above, the bilinear form $(\phantom{v},\phantom{v})_{V\otimes W}$ is non-degenerate and $\epsilon$-symmetric for the natural $\Gamma$-grading of $V\otimes W$.
\end{pp}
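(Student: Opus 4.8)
The plan is to verify the three required properties in turn: that the formula genuinely defines a bilinear form, that it is $\epsilon$-symmetric, and that it is non-degenerate. Since the defining formula involves the degrees of $w$ and $v'$ through the factor $\epsilon(w,v')$, I would first address well-definedness. Decomposing $V=\bigoplus_{\gamma}V_{\gamma}$ and $W=\bigoplus_{\delta}W_{\delta}$, the factor $\epsilon(w,v')$ is constant on each product of homogeneous components $V_{\gamma_1}\times W_{\delta_1}\times V_{\gamma_2}\times W_{\delta_2}$; on such a component the expression is genuinely multilinear, and extending via the direct sum decomposition produces a well-defined $4$-linear map $V\times W\times V\times W\to k$, hence by the universal property of the tensor product a bilinear form on $V\otimes W$. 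Because both $(\phantom{v},\phantom{v})_V$ and $(\phantom{v},\phantom{v})_W$ are of degree $0$, the term $\epsilon(w,v')(v,v')_V(w,w')_W$ vanishes unless $|v'|=-|v|$ and $|w'|=-|w|$, so that $|v\otimes w|+|v'\otimes w'|=0$; this shows the form is of degree $0$ for the natural grading of $V\otimes W$.

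For $\epsilon$-symmetry I would compute directly on homogeneous simple tensors. Writing $\epsilon(v\otimes w,v'\otimes w')=\epsilon(v,v')\epsilon(v,w')\epsilon(w,v')\epsilon(w,w')$ by bi-additivity of $\epsilon$, and expanding $\epsilon(v\otimes w,v'\otimes w')\,(v'\otimes w',v\otimes w)_{V\otimes W}$ using the definition of the form together with the $\epsilon$-symmetry of $(\phantom{v},\phantom{v})_V$ and $(\phantom{v},\phantom{v})_W$, the scalar prefactor becomes a product of commutation factors in which the pairs $\epsilon(v,v')\epsilon(v',v)$, $\epsilon(w,w')\epsilon(w',w)$ and $\epsilon(v,w')\epsilon(w',v)$ each collapse to $1$ by the axiom $\epsilon(a,b)\epsilon(b,a)=1$. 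The sole surviving factor is $\epsilon(w,v')$, which reproduces $(v\otimes w,v'\otimes w')_{V\otimes W}$ exactly, giving the claim.

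For non-degeneracy I would argue with dual bases, in the spirit of Lemma \ref{lm canonical moment map}. Choose homogeneous bases $\lbrace e_i\rbrace$ of $V$ and $\lbrace f_a\rbrace$ of $W$ and, using non-degeneracy of $(\phantom{v},\phantom{v})_V$ and $(\phantom{v},\phantom{v})_W$, homogeneous dual bases $\lbrace e^i\rbrace$, $\lbrace f^a\rbrace$ with $(e_i,e^j)_V=\delta_{ij}$ and $(f_a,f^b)_W=\delta_{ab}$, noting $|e^i|=-|e_i|$ and $|f^a|=-|f_a|$. Then $\lbrace e_i\otimes f_a\rbrace$ and $\lbrace e^j\otimes f^b\rbrace$ are bases of $V\otimes W$, and the definition gives $(e_i\otimes f_a,e^j\otimes f^b)_{V\otimes W}=\epsilon(f_a,e^j)\delta_{ij}\delta_{ab}$. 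Using $\epsilon(a,-b)=\epsilon(b,a)$ from Remark \ref{decolouration 1}, the diagonal entry simplifies to $\epsilon(|e_i|,|f_a|)$, a nonzero scalar. Hence the matrix pairing these two bases is diagonal and invertible, so the map $z\mapsto(z,\phantom{v})_{V\otimes W}$ from $V\otimes W$ to its dual is an isomorphism, which is the desired non-degeneracy.

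The computations are all routine; the only point requiring care is the bookkeeping of commutation factors, in particular tracking which $\epsilon$-factors survive in the symmetry identity and correctly simplifying the twist $\epsilon(f_a,e^j)$ on the diagonal using the degree constraint imposed by $(\phantom{v},\phantom{v})_V$ and $(\phantom{v},\phantom{v})_W$. Alternatively, one could give a coordinate-free proof of non-degeneracy by identifying $(\phantom{v},\phantom{v})_{V\otimes W}$ with the composite of the isomorphism $\flat_V\otimes\flat_W\colon V\otimes W\to V^*\otimes W^*$ induced by the two non-degenerate forms with the canonical graded isomorphism $V^*\otimes W^*\cong(V\otimes W)^*$ carrying the Koszul sign $\epsilon(\eta,v)$; non-degeneracy is then immediate, a tensor product of isomorphisms being an isomorphism.
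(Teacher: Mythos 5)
Your proof is correct, and for the $\epsilon$-symmetry it is essentially the paper's own argument: the paper performs the same homogeneous computation, rewriting the prefactor $\epsilon(w,v')\epsilon(v,v')\epsilon(w,w')$ as $\epsilon(v+w,v'+w')\epsilon(w',v)$ via the cancellation $\epsilon(v,w')\epsilon(w',v)=1$, which is exactly your bookkeeping in a slightly different order. Where you genuinely diverge is non-degeneracy: the paper dispatches it in one line as ``a direct consequence of the fact that the tensor product of non-degenerate bilinear forms is non-degenerate'', whereas you give an explicit dual-basis computation showing the pairing matrix between $\lbrace e_i\otimes f_a\rbrace$ and $\lbrace e^j\otimes f^b\rbrace$ is diagonal with entries $\epsilon(f_a,e^i)=\epsilon(e_i,f_a)\in k^*$ (the simplification via $\epsilon(a,-b)=\epsilon(b,a)$ of Remark \ref{decolouration 1} is right, and the homogeneity $|e^i|=-|e_i|$ of the dual basis follows, as in Lemma \ref{lm canonical moment map}, from the form being non-degenerate of degree $0$). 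Your extra care is actually warranted: $(\phantom{v},\phantom{v})_{V\otimes W}$ is not literally the tensor product of the two forms but its twist by the factor $\epsilon(w,v')$, so the paper's one-line appeal, read strictly, skips a small step that your computation closes --- the twist merely rescales the pairing on each pair of homogeneous blocks by a unit of $k$, which is precisely what your diagonal entries record. Your coordinate-free alternative, identifying the form with the composite of $\flat_V\otimes\flat_W : V\otimes W \rightarrow V^*\otimes W^*$ with the Koszul-signed canonical isomorphism $V^*\otimes W^*\cong (V\otimes W)^*$, is the clean formulation of what the paper presumably intends, and either of your two arguments is a complete substitute for the paper's assertion.
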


\begin{proof}
Let $v\otimes w,v'\otimes w' \in V\otimes W$. We have
\begin{align*}
(v\otimes w,v'\otimes w')_{V\otimes W}&=\epsilon(w,v')(v,v')_V (w,w')_W\\
&=\epsilon(w,v')\epsilon(v,v')\epsilon(w,w')(v',v)_V (w',w)_W\\
&=\epsilon(v+w,v'+w')\epsilon(w',v)(v',v)_V (w',w)_W\\
&=\epsilon(v\otimes w,v'\otimes w')(v'\otimes w', v\otimes w)_{V\otimes W}
\end{align*} 
and so $(\phantom{v},\phantom{v})_{V\otimes W}$ is $\epsilon$-symmetric. It is non-degenerate as a direct consequence of the fact that the tensor product of non-degenerate bilinear forms is non-degenerate.
\end{proof}

Hence, one can define an $\epsilon$-orthogonal representation of the $\epsilon$-quadratic colour Lie algebra $(\gg\oplus \hh, B_{\gg}\perp B_{\hh})$
$$\rho : \gg\oplus \hh \rightarrow \so_{\epsilon}(V\otimes W,(\phantom{v},\phantom{v})_{V\otimes W})$$
by:
\begin{equation} \label{tens prod of rep}
\rho(g+h)(v\otimes w):= \rho_{\gg}(g)(v)\otimes w + \epsilon(h,v)v\otimes \rho_{\hh}(w) \quad \forall g\in \gg, ~\forall h \in \hh,~\forall v\otimes w \in V\otimes W.
\end{equation}

As we now show, the moment map of the tensor product is essentially the product of the moment maps of the factors.

\begin{pp}
With the notation above, the moment map $\mu_{V\otimes W} \in Alt_{\epsilon}^2(V\otimes W,\gg\oplus \hh)$ satisfies:
\begin{equation}\label{moment map of tens prod}
\mu_{V\otimes W}(v\otimes w, v'\otimes w')=\epsilon(w,v')\Big( \mu_V(v,v')(w,w')_W+(v,v')_V\mu_W(w,w')\Big)
\end{equation}
for all $v\otimes w$, $v'\otimes w'$ in $V\otimes W$.
\end{pp}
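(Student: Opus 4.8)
**

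The plan is to compute the moment map $\mu_{V\otimes W}$ directly from its defining property in Definition \ref{df moment map}, namely that for all $X \in \gg \oplus \hh$ and all tensors $v\otimes w, v'\otimes w' \in V\otimes W$,
$$B_{\gg}\perp B_{\hh}\big(X,\mu_{V\otimes W}(v\otimes w, v'\otimes w')\big)=\big(\rho(X)(v\otimes w),v'\otimes w'\big)_{V\otimes W}.$$
Since the pairing $B_{\gg}\perp B_{\hh}$ is non-degenerate and splits as an orthogonal sum, it suffices to test against $X=g\in\gg$ and against $X=h\in\hh$ separately, and the claimed formula \eqref{moment map of tens prod} will follow once both components match. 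First I would substitute the explicit action \eqref{tens prod of rep} into the right-hand side, so that $\rho(g+h)(v\otimes w)=\rho_{\gg}(g)(v)\otimes w+\epsilon(h,v)\,v\otimes\rho_{\hh}(h)(w)$.

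Taking $X=g\in\gg$ first, I would compute $\big(\rho(g)(v\otimes w),v'\otimes w'\big)_{V\otimes W}$ using the definition $(v\otimes w,v'\otimes w')_{V\otimes W}=\epsilon(w,v')(v,v')_V(w,w')_W$. Here $\rho(g)$ only acts on the $V$-factor, giving $\rho_{\gg}(g)(v)\otimes w$, and plugging into the tensor form yields a scalar $\epsilon$ prefactor times $(\rho_{\gg}(g)(v),v')_V(w,w')_W$, which by the defining property of $\mu_V$ equals $B_{\gg}(g,\mu_V(v,v'))(w,w')_W$. By $B_{\gg}$-linearity this identifies the $\gg$-component of $\mu_{V\otimes W}$ with the appropriate multiple of $\mu_V(v,v')$. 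The analogous computation for $X=h\in\hh$ identifies the $\hh$-component with a multiple of $\mu_W(w,w')$, using the defining property of $\mu_W$ and noting that $\big(\epsilon(h,v)\,v\otimes\rho_{\hh}(h)(w),v'\otimes w'\big)_{V\otimes W}$ produces the factor $(v,v')_V$ together with $(\rho_{\hh}(h)(w),w')_W$.

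The only real subtlety—and hence the step I expect to be the main obstacle—is bookkeeping the commutation factors $\epsilon$ correctly. The action \eqref{tens prod of rep} already carries an $\epsilon(h,v)$ sign in its second term, the bilinear form $(\phantom{v},\phantom{v})_{V\otimes W}$ carries an $\epsilon(w,v')$ sign, and the definition of $\mu_V$, $\mu_W$ via $B$ implicitly involves further degree conventions; these must be combined so that the overall prefactor collapses to the single $\epsilon(w,v')$ appearing in \eqref{moment map of tens prod}. I would verify this by working with homogeneous elements throughout and repeatedly applying the multiplicativity and inversion rules of Remark \ref{decolouration 1}, in particular $\epsilon(a,-b)=\epsilon(b,a)$ and $\epsilon(a+b,c)=\epsilon(a,c)\epsilon(b,c)$. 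To confirm the degrees match, note that both $\mu_V$ and $\mu_W$ are of degree $0$ by Proposition \ref{proprietes de mu}, so the outputs $\mu_V(v,v')$ and $\mu_W(w,w')$ are homogeneous of degrees $|v|+|v'|$ and $|w|+|w'|$ respectively, which is exactly what is needed for the two tested identities to be consistent. Once the $\gg$- and $\hh$-components are both confirmed, the orthogonal-sum structure of $B_{\gg}\perp B_{\hh}$ and non-degeneracy give \eqref{moment map of tens prod} uniquely.
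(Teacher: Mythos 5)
Your proposal is correct and takes essentially the same route as the paper: it tests the defining identity $B_{\gg\oplus\hh}\big(g+h,\mu_{V\otimes W}(v\otimes w,v'\otimes w')\big)=\big(\rho(g+h)(v\otimes w),v'\otimes w'\big)_{V\otimes W}$ against the two summands of the orthogonal decomposition, substitutes the action \eqref{tens prod of rep} and the tensor form, invokes the defining properties of $\mu_V$ and $\mu_W$, and concludes by non-degeneracy of $B_{\gg}\perp B_{\hh}$. The one point worth making explicit in the $\epsilon$-bookkeeping you defer is that the stray factor $\epsilon(h,v+v')$ arising in the $\hh$-component collapses to $1$ precisely because $(v,v')_V=0$ unless $|v'|=-|v|$, which is exactly how the paper reduces the overall prefactor to the single $\epsilon(w,v')$.
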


\begin{proof}
Let $g \in \gg$, $h\in \hh$ and $v\otimes w,v'\otimes w'\in V\otimes W$. We have
\begin{align*}
B_{\gg\oplus\hh}(g+h,\mu_{V\otimes W}(v\otimes w, v'\otimes w'))=&((g+h)(v\otimes w),v'\otimes w')_{V\otimes W} \\
=&(g(v)\otimes w,v'\otimes w')_{V\otimes W}+\epsilon(h,v)(v\otimes h(w),v'\otimes w')_{V\otimes W}\\
=&\epsilon(w,v')(g(v),v')_V(w,w')_W\\
&+\epsilon(h+w,v')\epsilon(h,v)(v,v')_V(h(w),w')_W\\
=&\epsilon(w,v')(B_{\gg}(g,\mu_V(v,v')(w,w')_W)\\
&+B_{\hh}(h,\epsilon(h,v+v')\mu_W(w,w')(v,v')_V))
\end{align*}
and since $(v,v')_V=0$ if $|v|\neq -|v'|$, we obtain
$$B_{\gg\oplus\hh}(g+h,\mu_{V\otimes W}(v\otimes w, v'\otimes w'))=B_{\gg\oplus\hh}\big(g+h,\epsilon(w,v')( \mu_V(v,v')(w,w')_W+\mu_W(w,w')(v,v')_V )\big).$$
\end{proof}

\section{Characterisation of $\epsilon$-orthogonal representations of colour Lie type and the norm of the moment map} \label{section colour Lie type}

Let $\Gamma$ be an abelian group and let $\epsilon$ be a commutation factor of $\Gamma$. In this section, we state and prove a theorem on $\epsilon$-orthogonal representations of $\epsilon$-quadratic colour Lie algebras which contains and generalises the results of Kostant (see \cite{Kos99}, \cite{Kos01}) on orthogonal and symplectic complex representations of quadratic Lie algebras and the results of Chen and Kang (see \cite{ChenKang2015}) on orthosymplectic complex representations of quadratic Lie superalgebras.

\begin{df}
Let $\rho : \gg \rightarrow \so_{\epsilon}(V,(\phantom{v},\phantom{v}))$ be a finite-dimensional $\epsilon$-orthogonal representation of a finite-dimensional $\epsilon$-quadratic colour Lie algebra $(\gg,B_{\gg})$. Let $\gt:=\gg\oplus V$ and let $B_{\gt}$ be the non-degenerate $\epsilon$-symmetric bilinear form on $\gt$ defined by $B_{\gt}:=B_{\gg}\perp (\phantom{v},\phantom{v})$. We say that the representation $\rho : \gg \rightarrow \so_{\epsilon}(V,(\phantom{v},\phantom{v}))$ is of colour Lie type if there exists a colour Lie algebra structure $\lbrace \phantom{v},\phantom{v} \rbrace$ on $\gt$ such that
\begin{itemize}
\item the $\epsilon$-quadratic form $B_{\gt}$ is $ad(\gt)$-invariant ;
\item $\lbrace x,y\rbrace=\lbrace x,y\rbrace_{\gg}$ for $x,y$ in $\gg$ ;
\item $\lbrace x,v \rbrace=\rho(x)(v)$ for $x$ in $\gg$, for $v$ in $V$.
\end{itemize}
If we also have $\lbrace V,V \rbrace\subseteq \gg$ then the representation $\rho : \gg \rightarrow \so_{\epsilon}(V,(\phantom{v},\phantom{v}))$ is said to be of colour $\mathbb{Z}_2$-Lie type.
\end{df}

\begin{ex} \label{ex nat rep of so is Lie type}
Let $V$ be a finite-dimensional $\Gamma$-graded vector space together with a non-degenerate $\epsilon$-symmetric bilinear form $(\phantom{v},\phantom{v})$. Then, $V$ is of colour $\mathbb{Z}_2$-Lie type as a representation of $(\so_{\epsilon}(V,(\phantom{v},\phantom{v})),B)$ where
\begin{align*}
B(f,g)&:=-\frac{1}{2}Tr_{\epsilon}(fg) \qquad \forall f,g \in \so_{\epsilon}(V,(\phantom{v},\phantom{v})), \\
\lbrace v,w \rbrace&:=\mu_{can}(v,w) \qquad \forall v,w \in V.
\end{align*}
The colour Lie algebra $\so_{\epsilon}(V,(\phantom{v},\phantom{v}))\oplus V$ is then isomorphic to the colour Lie algebra $\so_{\epsilon}(V\oplus L,(\phantom{v},\phantom{v})\perp (\phantom{v},\phantom{v})_{L} )$ where $L$ is a one-dimensional trivially $\Gamma$-graded vector space with an appropriate non-degenerate $\epsilon$-symmetric bilinear form $(\phantom{v},\phantom{v})_{L}$.
\end{ex}
\vspace{0.1cm}

Recall that (see Definition \ref{df exterior product}) the bilinear form $B_{\gt}$ on $\gt=\gg\oplus V$ allows us to define an exterior product
$$\wedge_{B_{\gt}} : Alt_{\epsilon}(V,\gt)\times Alt_{\epsilon}(V,\gt) \rightarrow Alt_{\epsilon}(V)$$
and hence the norm of any $f\in Alt_{\epsilon}(V,\gt)$ by $N_{B_{\gt}}(f)=f\wedge_{B_{\gt}} f$ (see Definition \ref{def norm}).
\vspace{0.1cm}

\begin{thm} \label{thm N(mu+phi)=0}
Let $\rho : \gg \rightarrow \so_{\epsilon}(V,(\phantom{v} , \phantom{v}))$ be a finite-dimensional $\epsilon$-orthogonal representation of a finite-dimensional $\epsilon$-quadratic colour Lie algebra $(\gg,B_{\gg})$ and let $\mu \in Alt_{\epsilon}^2(V,\gg)$ be its moment map (see Definition \ref{df moment map}).
\begin{enumerate}[label=\alph*)]
\item If $\rho : \gg \rightarrow \so_{\epsilon}(V,(\phantom{v} , \phantom{v}))$ is of colour Lie type, then $\phi \in Alt^2_{\epsilon}(V,\gg\oplus V)$ defined by
$$\phi(v,w)=\lbrace v,w \rbrace - \mu(v,w) \qquad \forall v,w\in V$$
is of degree $0$, takes its values in $V$ and satisfies:
\begin{align}
\rho(x)(\phi(v,w))&=\phi(\rho(x)(v),w)+\epsilon(x,v)\phi(v,\rho(x)(w)) \quad &\forall x \in \gg, ~ \forall v,w \in V \label{phi g-inv}, \\
(\phi(u,v),w)&=-\epsilon(u,v)(v,\phi(u,w)) \quad &\forall u,v,w \in V \label{phi ()-inv}.
\end{align}
\item Let $\phi \in Alt_{\epsilon}^2(V,V)$ be of degree $0$ and satisfy \eqref{phi g-inv} and \eqref{phi ()-inv}. Let $\gt:=\gg\oplus V$, let $B_{\gt}:=B_{\gg}\perp (\phantom{v},\phantom{v})$ and let $\lbrace \phantom{v},\phantom{v} \rbrace \in Alt^2_{\epsilon}(\gt,\gt)$ be the unique map which extends the bracket of $\gg$, the action of $\gg$ on $V$ and such that
$$\lbrace v,w\rbrace=\mu(v,w)+\phi(v,w) \qquad \forall v,w \in V.$$
Then the following are equivalent:
\begin{enumerate}[label=\roman*)]
\item $(\gt,B_{\gt},\lbrace \phantom{v},\phantom{v} \rbrace)$ is an $\epsilon$-quadratic colour Lie algebra.
\item $N_{B_{\gt}}(\mu +\phi)=0$.
\item $N_{B_{\gt}}(\mu)=-N_{B_{\gt}}(\phi)$.
\end{enumerate}
\end{enumerate}
\end{thm}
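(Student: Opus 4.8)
The plan is to treat the two parts separately: part a) will follow from unwinding the defining properties of a colour Lie type structure, while part b) reduces, after a case analysis of the $\epsilon$-Jacobi identity, to a single computation identifying the Jacobiator with the norm $N_{B_{\gt}}(\mu+\phi)$.

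For part a), suppose $\lbrace\cdot,\cdot\rbrace$ is a colour Lie type structure on $\gt$. I would first show that the $\gg$-component of $\lbrace v,w\rbrace$ (for $v,w\in V$) is exactly $\mu(v,w)$, so that $\phi$ takes its values in $V$; being the difference of two degree-$0$, $\epsilon$-antisymmetric maps, $\phi$ then automatically lies in $Alt^2_{\epsilon}(V,V)$ and is of degree $0$. To identify this $\gg$-component I would pair $\lbrace v,w\rbrace$ with an arbitrary $x\in\gg$ via $B_{\gt}$ and move $x$ across the bracket using the $\epsilon$-symmetry and $ad$-invariance of $B_{\gt}$ together with $\lbrace x,v\rbrace=\rho(x)(v)$; the $\epsilon$-factors cancel and one recovers $B_{\gg}(x,\mu(v,w))=(\rho(x)(v),w)$, which pins down the $\gg$-component by non-degeneracy of $B_{\gg}$. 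Condition \eqref{phi g-inv} is then the $V$-projection of the $\epsilon$-Jacobi identity for a triple $(x,v,w)$ with $x\in\gg$ and $v,w\in V$ (the $\gg$-projection merely reproducing the equivariance of $\mu$ from Proposition \ref{proprietes de mu}), and condition \eqref{phi ()-inv} is precisely the $ad$-invariance of $B_{\gt}$ applied to a triple $(u,v,w)\in V^3$, once one notes that $\mu(u,v)$ is orthogonal to $w$.

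For part b), I would first dispose of the equivalence ii)$\Leftrightarrow$iii): since $B_{\gt}=B_{\gg}\perp(\phantom{v},\phantom{v})$ and $\mu$, $\phi$ take values in the orthogonal summands $\gg$ and $V$, the mixed terms in $(\mu+\phi)\wedge_{B_{\gt}}(\mu+\phi)$ vanish, so that $N_{B_{\gt}}(\mu+\phi)=N_{B_{\gt}}(\mu)+N_{B_{\gt}}(\phi)$, giving the equivalence at once. The substance of the theorem is i)$\Leftrightarrow$ii). By construction $\lbrace\cdot,\cdot\rbrace$ is $\epsilon$-antisymmetric and $B_{\gt}$ is non-degenerate and $\epsilon$-symmetric, so the only conditions to verify for an $\epsilon$-quadratic colour Lie algebra are the $ad$-invariance of $B_{\gt}$ and the $\epsilon$-Jacobi identity. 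I would establish $ad$-invariance by the usual case analysis on the number of arguments lying in $V$: the case $V^3$ is exactly \eqref{phi ()-inv}, the cases with two $V$-arguments reduce to the $\so_{\epsilon}$-condition and the defining relation of $\mu$, and the remaining cases are immediate; hence $ad$-invariance holds under our hypotheses.

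It then remains to show that the $\epsilon$-Jacobi identity is equivalent to $N_{B_{\gt}}(\mu+\phi)=0$. Running through the same case analysis, the identity is automatic when at most one argument lies in $V$ (respectively the Jacobi identity of $\gg$, the fact that $\rho$ is a morphism, and, for one $\gg$-argument, the combination of Proposition \ref{proprietes de mu} and \eqref{phi g-inv}), so everything comes down to the Jacobiator $J(u,v,w)$ of a triple $u,v,w\in V$, which has a $\gg$-component and a $V$-component. The first step is that the $\gg$-component vanishes identically: it is built only from terms $\mu(u,\phi(v,w))$, and writing $T(a,b,c):=(\phi(a,b),c)$ I would observe that \eqref{phi ()-inv} makes $T\in Alt^3_{\epsilon}(V)$ while \eqref{phi g-inv} together with $\rho(\gg)\subseteq\so_{\epsilon}$ makes $T$ invariant; the cyclic sum defining the $\gg$-component, paired against any $x\in\gg$, then becomes the three terms of the invariance identity for $T$ and cancels. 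The second step, which I expect to be the main obstacle, is the $V$-component: testing it against $t\in V$ (legitimate since $(\phantom{v},\phantom{v})$ is non-degenerate), the defining relation of $\mu$ turns $(\rho(\mu(v,w))(u),t)$ into $B_{\gg}(\mu(v,w),\mu(u,t))$ and \eqref{phi ()-inv} turns $(\phi(u,\phi(v,w)),t)$ into $(\phi(v,w),\phi(u,t))$, yielding
\[
B_{\gt}(J(u,v,w),t)=-\sum_{\mathrm{cyclic}}\epsilon(u,v)\,B_{\gt}\big((\mu+\phi)(v,w),(\mu+\phi)(u,t)\big).
\]
The delicate point is to recognise the right-hand side as the norm: expanding $N_{B_{\gt}}(\mu+\phi)(u,v,w,t)$ over the six shuffles of $S(\llbracket1,2\rrbracket,\llbracket3,4\rrbracket)$ and pairing complementary shuffles using the $p$-coefficients of Proposition \ref{pp action} and the $\epsilon$-symmetry of $B_{\gt}$, both sides become sums over the three pairings of $\lbrace u,v,w,t\rbrace$ into two pairs, and a careful bookkeeping of the $\epsilon$-factors gives $B_{\gt}(J(u,v,w),t)=-\tfrac{1}{2}\epsilon(w,u)\,N_{B_{\gt}}(\mu+\phi)(u,v,w,t)$. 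Since $\epsilon(w,u)\neq 0$ and $(\phantom{v},\phantom{v})$ is non-degenerate, the $V$-component of $J$ vanishes for all $u,v,w$ if and only if $N_{B_{\gt}}(\mu+\phi)=0$, which completes i)$\Leftrightarrow$ii).
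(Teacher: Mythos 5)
Your proposal is correct and follows essentially the same route as the paper's proof: part a) by pinning down the $\gg$-component via $B_{\gt}$-pairing and projecting the $\epsilon$-Jacobi identity and $ad$-invariance, and part b) by reducing the Jacobi identity to triples in $V$, killing the $\gg$-component of the Jacobiator using \eqref{phi g-inv} and \eqref{phi ()-inv}, expanding $N_{B_{\gt}}(\mu+\phi)$ over the six shuffles of $S(\llbracket1,2\rrbracket,\llbracket3,4\rrbracket)$ to identify it with the $V$-component tested against a fourth vector (your constant $-\tfrac{1}{2}\epsilon(w,u)$ matches the paper's factor $2\epsilon(v_1,v_3)$ once the two Jacobiator conventions, $\lbrace x,\lbrace y,z\rbrace\rbrace$ versus $\lbrace\lbrace x,y\rbrace,z\rbrace$, are reconciled), and deriving ii)$\Leftrightarrow$iii) from $\mu\wedge_{B_{\gt}}\phi=0$. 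Your only departure is cosmetic: where the paper disposes of the $\gg$-component by a direct bracket computation in an unnamed lemma, you repackage the same cancellation as the $\gg$-invariance of the $\epsilon$-alternating trilinear form $T(a,b,c)=(\phi(a,b),c)$, which is an equivalent argument.
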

\vspace{0.1cm}

\begin{proof}
$a)$ Clearly, $\lbrace v,w \rbrace - \mu(v,w) \in V$ since $B_{\gg}$ is orthogonal to $(\phantom{v},\phantom{v})$ and
$$B_{\gt}(x,\lbrace v,w \rbrace)=B_{\gt}(x,\mu(v,w))\qquad \forall x \in \gg.$$
Equation \eqref{phi g-inv} follows from the $\epsilon$-Jacobi identity of $\gt$ and \eqref{phi ()-inv} from the $ad$-invariance of $B_{\gt}$.
\vspace{0.5cm}

$b)$ A straightforward calculation shows that $B_{\gt}$ is ad($\gt$)-invariant in the sense of Definition \ref{def quadratic}. Moreover, for $u,v,w\in\gt$, if $u,v$ or $w$ is an element of $\gg$ then
$$\epsilon(w,u)\lbrace \lbrace u,v \rbrace , w \rbrace+\epsilon(u,v)\lbrace \lbrace v,w \rbrace , u \rbrace +\epsilon(v,w)\lbrace \lbrace w,u \rbrace , v \rbrace=0$$
since $V$ is $\gg$-equivariant. We need the following lemma:
\begin{lm}
Let $u,v,w \in V$. We have
$$B_{\gg}\Big(\epsilon(w,u)\lbrace \lbrace u,v \rbrace , w \rbrace+\epsilon(u,v)\lbrace \lbrace v,w \rbrace , u \rbrace +\epsilon(v,w)\lbrace \lbrace w,u \rbrace , v \rbrace,x\Big)=0 \qquad \forall x \in \gg.$$
\end{lm}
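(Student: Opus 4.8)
The plan is to reduce everything to the $\gg$-component of the Jacobiator and then recognise an invariance identity. Writing $\lbrace a,b\rbrace=\mu(a,b)+\phi(a,b)$ for $a,b\in V$ and $\lbrace g,c\rbrace=\rho(g)(c)$ for $g\in\gg$, $c\in V$, the only part of $\lbrace\lbrace u,v\rbrace,w\rbrace$ lying in $\gg$ is $\mu(\phi(u,v),w)$, since $\rho(\mu(u,v))(w)$ and $\phi(\phi(u,v),w)$ both lie in $V$. Hence the expression in the lemma equals
$$\sum_{\mathrm{cyc}}\epsilon(w,u)\,B_{\gg}\big(\mu(\phi(u,v),w),x\big),$$
and by the defining relation of the moment map (Definition \ref{df moment map}) together with the $\epsilon$-symmetry of $B_{\gg}$ each summand is a unit multiple of $(\rho(x)(\phi(u,v)),w)$, so it suffices to show that $\sum_{\mathrm{cyc}}\epsilon(w,u)(\rho(x)(\phi(u,v)),w)=0$.

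Second, I would introduce the trilinear form $C(a,b,c):=(\phi(a,b),c)$ on $V$ and establish its two symmetry properties. It is $\epsilon$-antisymmetric in the first two slots because $\phi\in Alt^2_{\epsilon}(V,V)$, and \eqref{phi ()-inv} combined with the $\epsilon$-symmetry of $(\phantom{v},\phantom{v})$ makes it $\epsilon$-antisymmetric in the last two; thus $C\in Alt^3_{\epsilon}(V)$. Moreover $C$ is $\gg$-invariant: feeding $\rho(x)$ into the first two arguments and using \eqref{phi g-inv} produces $(\rho(x)(\phi(a,b)),c)$, while the $\so_{\epsilon}$-condition on $\rho(x)$ identifies the third-argument term with the negative of this, so the three terms cancel, giving
$$C(\rho(x)(a),b,c)+\epsilon(x,a)\,C(a,\rho(x)(b),c)+\epsilon(x,a+b)\,C(a,b,\rho(x)(c))=0. \qquad (\star)$$

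Finally I would expand $(\rho(x)(\phi(u,v)),w)=C(\rho(x)(u),v,w)+\epsilon(x,u)\,C(u,\rho(x)(v),w)$ using \eqref{phi g-inv}, substitute into the cyclic sum, and reorder all resulting terms into the three canonical quantities $C(\rho(x)(u),v,w)$, $C(u,\rho(x)(v),w)$ and $C(u,v,\rho(x)(w))$ by means of the $\epsilon$-alternation of $C$. The coefficients then collapse, thanks to the systematic cancellations $\epsilon(p,q)\epsilon(q,p)=1$, to a single nonzero multiple of $\epsilon(w,u)$ times the left-hand side of $(\star)$, which vanishes. The conceptual point is thus that hypotheses \eqref{phi g-inv} and \eqref{phi ()-inv} exactly say that $C$ is a $\gg$-invariant $\epsilon$-alternating $3$-form, and the lemma is merely the contraction of this invariance against a cyclic sum. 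The only real difficulty is the bookkeeping of commutation-factor signs in the reordering step; once one tracks, for each cyclic shift of $C$, the factor $\epsilon$ of the displaced entry against the remaining ones, everything reduces to $(\star)$ and no further input is needed.
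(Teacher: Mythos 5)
Your proof is correct, but it is organised differently from the paper's. The paper proves the lemma by a direct chain of rewritings: starting from $B_{\gg}(\lbrace u,\lbrace v,w\rbrace\rbrace,x)$, it uses the moment-map identity to pass to $(x(\phi(v,w)),u)$, expands with \eqref{phi g-inv}, and then converts each resulting term back into a bracket expression, ending with the relation
$$B_{\gg}(\lbrace u,\lbrace v,w\rbrace\rbrace,x)=-\epsilon(u,v+w)B_{\gg}(\lbrace v,\lbrace w,u\rbrace\rbrace,x)+\epsilon(u,v+w)\epsilon(v,w)B_{\gg}(\lbrace w,\lbrace v,u\rbrace\rbrace,x),$$
which rearranges into the vanishing of the cyclic sum. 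You instead isolate a structural object: the trilinear form $C(a,b,c)=(\phi(a,b),c)$, which you correctly show lies in $Alt^3_{\epsilon}(V)$ (the $\epsilon$-antisymmetry in the last two slots follows from \eqref{phi ()-inv}, the $\epsilon$-symmetry of $(\phantom{v},\phantom{v})$ and the fact that $\phi$ has degree $0$) and is $\gg$-invariant (your identity $(\star)$, from \eqref{phi g-inv} plus the $\mathfrak{so}_{\epsilon}$-condition on $\rho(x)$). Your preliminary reduction is also sound: the $\gg$-component of $\lbrace\lbrace u,v\rbrace,w\rbrace$ is indeed $\mu(\phi(u,v),w)$, and the common factor $\epsilon(u+v+w,x)$ produced by the moment-map identity and the $\epsilon$-symmetry of $B_{\gg}$ is the same for all three cyclic terms, so it factors out. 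I checked the final reordering: the six terms pair up and the cyclic sum collapses to exactly $2\epsilon(w,u)$ times the left-hand side of $(\star)$, so the conclusion needs no division (harmless in any case, as $char(k)\neq 2$). What your route buys is conceptual transparency: it makes explicit, in the colour setting, Kostant's observation recalled in the paper's introduction that the $V$-component of the bracket is governed by an invariant alternating trilinear form, and it explains why the cancellation happens rather than exhibiting it term by term; the cost is the commutation-factor bookkeeping in the reordering step, which you correctly identified as the only delicate point and which does work out as you claim. Both arguments consume exactly the same hypotheses, namely \eqref{phi g-inv}, \eqref{phi ()-inv}, the definition of the moment map and the invariance of $(\phantom{v},\phantom{v})$ under $\rho(\gg)$.
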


\begin{proof}
Let $x \in \gg$. We have
\begin{align*}
B_{\gg}(\lbrace u , \lbrace v,w \rbrace \rbrace,x)&=\epsilon(u+v+w,x)B_{\gg}(x,\lbrace u , \lbrace v,w \rbrace \rbrace) \\
&=-\epsilon(u,v+w)\epsilon(u+v+w,x)B_{\gg}(x,\lbrace \lbrace v,w \rbrace , u \rbrace)\\
&=-\epsilon(u,v+w)\epsilon(u+v+w,x)(\lbrace x, \lbrace v,w \rbrace \rbrace, u)\\
&=-\epsilon(u,v+w)\epsilon(u+v+w,x)(x( \phi (v,w)), u)
\end{align*}
and since 
$$\rho(x)(\phi(v,w))=\phi(\rho(x)(v),w)+\epsilon(x,v)\phi(v,\rho(x)(w)),$$
then
\begin{align*}
B_{\gg}(\lbrace u , \lbrace v,w \rbrace \rbrace,x)&=-\epsilon(u,v+w)\epsilon(u+v+w,x)\Big(( \phi(x(v),w)  , u)+\epsilon(x,v)(\phi(v,x(w)),u)\Big)\\
&=-\epsilon(u,v+w)\epsilon(u+v+w,x)\Big(( \lbrace x(v),w \rbrace  , u)+\epsilon(x,v)(\lbrace v,x(w) \rbrace ,u)\Big)\\
&=-\epsilon(u,v+w)\epsilon(u+v+w,x)\Big(-\epsilon(x+v,w)( \lbrace w, x(v) \rbrace  , u)+\epsilon(x,v)(\lbrace v,x(w) \rbrace ,u)\Big)\\
&=-\epsilon(u,v+w)\epsilon(u+v+w,x)\Big(( x(v) , \lbrace w,u\rbrace )-\epsilon(x,v)\epsilon(v,x+w)(x(w),\lbrace v,u\rbrace)\Big)\\
&=-\epsilon(u,v+w)\epsilon(u+v+w,x)\Big(B_{\gg}( x , \lbrace v,\lbrace w,u\rbrace \rbrace)-\epsilon(v,w)B_{\gg}(x,\lbrace w,\lbrace v,u\rbrace \rbrace)\Big)\\
&=-\epsilon(u,v+w)B_{\gg}( \lbrace v,\lbrace w,u\rbrace \rbrace,x)+\epsilon(u,v+w)\epsilon(v,w)B_{\gg}(\lbrace w,\lbrace v,u\rbrace \rbrace,x).
\end{align*}
Hence,
$$B_{\gg}\Big(\epsilon(w,u)\lbrace \lbrace u,v \rbrace , w \rbrace+\epsilon(u,v)\lbrace \lbrace v,w \rbrace , u \rbrace +\epsilon(v,w)\lbrace \lbrace w,u \rbrace , v \rbrace,x\Big)=0.$$
\end{proof}

From the previous lemma, it follows that $\gt$ is a colour Lie algebra if and only if
$$\Big(\epsilon(v_3,v_1)\lbrace \lbrace v_1,v_2 \rbrace , v_3 \rbrace+\epsilon(v_1,v_2)\lbrace \lbrace v_2,v_3 \rbrace , v_1 \rbrace +\epsilon(v_2,v_3)\lbrace \lbrace v_3,v_1 \rbrace , v_2 \rbrace,v_4\Big)=0 \quad \forall v_1,v_2,v_3,v_4 \in V.$$
We set $\psi:=\mu+\phi \in Alt^2_{\epsilon}(V,\gt)$ and consider $N_{B_{\gt}}(\psi)=\psi\wedge_{B_{\gt}}\psi \in Alt^4_{\epsilon}(V)$. Let $v_1,v_2,v_3,v_4 \in V$. We have
\begin{align*}
N_{B_{\gt}}(\psi)(v_1,v_2,v_3,v_4)=\sum \limits_{\sigma \in S(\lbrace 1,2\rbrace,\lbrace 3,4 \rbrace)} p(\sigma;v_1,v_2,v_3,v_4)B_{\gt}(\psi(v_{\sigma(1)},v_{\sigma(2)}),\psi(v_{\sigma(3)},v_{\sigma(4)})).
\end{align*}
Since
$$S(\lbrace 1,2\rbrace,\lbrace 3,4 \rbrace)=\lbrace id,(123),(1243),(23),(13)(24),(243) \rbrace$$
and
$$B_{\gt}(x,y)=\epsilon(x,y)B_{\gt}(y,x) \qquad \forall x,y\in \gt,$$
we obtain
\begin{align*}
N_{B_{\gt}}(\psi)(v_1,v_2,v_3,v_4)=&B_{\gt}(\psi(v_1,v_2),\psi(v_3,v_4))+\epsilon(v_1,v_2+v_3)B_{\gt}(\psi(v_2,v_3),\psi(v_1,v_4))\\
&-\epsilon(v_3,v_4)\epsilon(v_1,v_2+v_4)B_{\gt}(\psi(v_2,v_4),\psi(v_1,v_3))\\
&-\epsilon(v_2,v_3)B_{\gt}(\psi(v_1,v_3),\psi(v_2,v_4))\\
&+\epsilon(v_1+v_2,v_3+v_4)B_{\gt}(\psi(v_3,v_4),\psi(v_1,v_2))\\
&+\epsilon(v_2+v_3,v_4)B_{\gt}(\psi(v_1,v_4),\psi(v_2,v_3))\\
=&2\Big(B_{\gt}(\psi(v_1,v_2),\psi(v_3,v_4))+\epsilon(v_1,v_2+v_3)B_{\gt}(\psi(v_2,v_3),\psi(v_1,v_4))\\
&-\epsilon(v_2,v_3)B_{\gt}(\psi(v_1,v_3),\psi(v_2,v_4)) \Big).
\end{align*}
Moreover, since
\begin{align*}
B_{\gt}(\psi(v_1,v_2),\psi(v_3,v_4))&=B_{\gg}(\mu(v_1,v_2),\mu(v_3,v_4))+(\phi(v_1,v_2),\phi(v_3,v_4))\\
&=(\mu(v_1,v_2)(v_3),v_4)+\epsilon(v_1+v_2,v_3+v_4)(\phi(v_3,v_4),\phi(v_1,v_2))\\
&=(\mu(v_1,v_2)(v_3),v_4)+(\phi(\phi(v_1,v_2),v_3),v_4)\\
&=(\lbrace \lbrace v_1,v_2\rbrace,v_3 \rbrace,v_4),
\end{align*}
we have
$$N_{B_{\gt}}(\psi)(v_1,v_2,v_3,v_4)=2\epsilon(v_1,v_3)\Big(\epsilon(v_3,v_1)\lbrace \lbrace v_1,v_2 \rbrace,v_3\rbrace+\epsilon(v_1,v_2)\lbrace \lbrace v_2,v_3\rbrace , v_1\rbrace+\epsilon(v_2,v_3)\lbrace \lbrace v_3,v_1\rbrace , v_2\rbrace,v_4\Big).$$
As pointed out above $\gt=\gg\oplus V$ is a colour Lie algebra if and only if
$$(\epsilon(v_3,v_1)\lbrace \lbrace v_1,v_2 \rbrace , v_3 \rbrace+\epsilon(v_1,v_2)\lbrace \lbrace v_2,v_3 \rbrace , v_1 \rbrace +\epsilon(v_2,v_3)\lbrace \lbrace v_3,v_1 \rbrace , v_2 \rbrace,v_4)=0 \quad \forall v_1,v_2,v_3,v_4 \in V$$
and so this is equivalent to $N_{B_{\gt}}(\mu + \phi)=0$. This proves $i)$ is equivalent to $ii)$. Note that since $V$ is orthogonal to $\gg$, we have
$$\mu \wedge_{B_{\gt}} \phi=0,$$
hence
$$N_{B_{\gt}}(\mu + \phi)=N_{B_{\gt}}(\mu) + N_{B_{\gt}}(\phi)$$
and so $ii)$ is equivalent to $iii)$.
\end{proof}
\vspace{0.1cm}

Taking $\phi=0$, the theorem implies the following characterisations of representations of colour $\mathbb{Z}_2$-Lie type.
\vspace{0.1cm}

\begin{cor}
Let  $\rho : \gg \rightarrow \so_{\epsilon}(V,(\phantom{v} , \phantom{v}))$ be a finite-dimensional $\epsilon$-orthogonal representation of a finite-dimensional $\epsilon$-quadratic colour Lie algebra $(\gg,B_{\gg})$ and let $\mu \in Alt_{\epsilon}^2(V,\gg)$ be its moment map (see Definition \ref{df moment map}). Let $\gt:=\gg\oplus V$, let $B_{\gt}:=B_{\gg}\perp (\phantom{v},\phantom{v})$ and let $\lbrace \phantom{v},\phantom{v} \rbrace \in Alt^2_{\epsilon}(\gt,\gt)$ be the unique map which extends the bracket of $\gg$, the action of $\gg$ on $V$ and such that
$$\lbrace v,w\rbrace=\mu(v,w) \qquad \forall v,w \in V.$$
Then the following are equivalent:
\begin{enumerate}[label=\alph*)]
\item $(\gt,B_{\gt},\lbrace \phantom{v},\phantom{v} \rbrace)$ is an $\epsilon$-quadratic colour Lie algebra.
\item $N_{B_{\gt}}(\mu)=0$.
\item $V$ is of colour $\mathbb{Z}_2$-Lie type.
\end{enumerate}
\end{cor}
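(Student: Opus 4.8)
The plan is to deduce this corollary directly from Theorem \ref{thm N(mu+phi)=0} by specialising to $\phi = 0$, and then to unwind the definition of colour $\mathbb{Z}_2$-Lie type to bring in condition c). First I would check that $\phi = 0$ is an admissible choice in part b) of the theorem: the zero map lies in $Alt_{\epsilon}^2(V,V)$, is of degree $0$, and satisfies both \eqref{phi g-inv} and \eqref{phi ()-inv} trivially, since every term vanishes. With this choice the bracket $\lbrace v,w\rbrace = \mu(v,w)+\phi(v,w)$ of the theorem reduces to exactly the bracket $\lbrace v,w\rbrace = \mu(v,w)$ of the corollary, and $N_{B_{\gt}}(\mu+\phi)=N_{B_{\gt}}(\mu)$. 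Hence the equivalence of i) and ii) in the theorem yields at once the equivalence of a) and b) in the corollary (and iii) collapses onto ii), giving no further content).

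For the implication a) $\Rightarrow$ c), if the structure in a) is an $\epsilon$-quadratic colour Lie algebra then by construction it extends the bracket of $\gg$ and the action of $\gg$ on $V$, the form $B_{\gt}$ is $ad$-invariant, and the bracket satisfies $\lbrace v,w\rbrace = \mu(v,w)\in\gg$ for all $v,w\in V$, so that $\lbrace V,V\rbrace\subseteq\gg$. This is precisely the definition of colour $\mathbb{Z}_2$-Lie type.

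The converse c) $\Rightarrow$ a) is where the one genuinely non-formal step occurs. Suppose $V$ is of colour $\mathbb{Z}_2$-Lie type via some colour Lie algebra structure $\lbrace \phantom{v},\phantom{v}\rbrace'$ on $\gt$ satisfying the three bullet conditions together with $\lbrace V,V\rbrace'\subseteq\gg$. Applying part a) of Theorem \ref{thm N(mu+phi)=0} to this structure, the map $\phi'(v,w):=\lbrace v,w\rbrace' - \mu(v,w)$ takes its values in $V$. On the other hand $\lbrace V,V\rbrace'\subseteq\gg$ and $\mu$ is $\gg$-valued, so $\phi'$ also takes its values in $\gg$. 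Since $\gg\cap V=\lbrace 0\rbrace$ in $\gt=\gg\oplus V$, we conclude $\phi'\equiv 0$, whence $\lbrace v,w\rbrace' = \mu(v,w)$ for all $v,w\in V$. Thus $\lbrace \phantom{v},\phantom{v}\rbrace'$ coincides with the bracket $\lbrace \phantom{v},\phantom{v}\rbrace$ of the corollary, and a) holds.

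The main obstacle is simply recognising that the colour $\mathbb{Z}_2$-condition $\lbrace V,V\rbrace\subseteq\gg$ forces the correction term to vanish, so that the colour $\mathbb{Z}_2$-Lie type structure, if it exists, is necessarily the one built from $\mu$ alone; once this is observed, everything else is a direct specialisation of the preceding theorem.
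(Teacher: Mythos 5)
Your proposal is correct and follows essentially the same route as the paper, which derives the corollary simply by taking $\phi=0$ in Theorem \ref{thm N(mu+phi)=0}. The one step you spell out that the paper leaves implicit --- that any colour $\mathbb{Z}_2$-Lie type bracket forces $\phi'$ to take values in both $V$ (by part a) of the theorem) and $\gg$ (since $\lbrace V,V\rbrace'\subseteq\gg$), hence $\phi'\equiv 0$ and the bracket is necessarily the one built from $\mu$ --- is exactly the right justification for the equivalence with c).
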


\section{The Bianchi map and special $\epsilon$-orthogonal representations} \label{section bianchi}

Let $\Gamma$ be an abelian group and let $\epsilon$ be a commutation factor of $\Gamma$. In the previous section we saw (Theorem \ref{thm N(mu+phi)=0}) that one can associate a colour Lie algebra $\gt$ to certain data on an $\epsilon$-orthogonal representation $V$ of an $\epsilon$-quadratic colour Lie algebra if a particular invariant of this data vanishes. In this section we will give a different interpretation of the data as an element of a space of ``curvature tensors'' on $V$. We show that the vanishing of the above invariant is equivalent to an algebraic ``Bianchi identity'' for the corresponding curvature tensor. Further analysis of this identity makes it clear that there are other natural conditions we can impose on $\epsilon$-orthogonal representations and this leads to the notion of special $\epsilon$-orthogonal representations.
\vspace{0.1cm}

\begin{df}
Let $V$ be a $\Gamma$-graded vector space. We define $\mathcal{R}(V)$ to be the vector space of all multilinear maps $R : V\times V \times V \times V \rightarrow k$ which satisfy
\begin{align}
R(A,B,C,D)&=-\epsilon(A,B)R(B,A,C,D) \qquad \forall A,B,C,D \in V, \label{R prop 1} \\
R(A,B,C,D)&=\epsilon(A+B,C+D)R(C,D,A,B) \qquad \forall A,B,C,D \in V. \label{R prop 2}
\end{align}
\end{df}

\begin{rem}
A map $R\in \mathcal{R}(V)$ satisfies
$$R(A,B,C,D)=-\epsilon(C,D)R(A,B,D,C) \qquad \forall A,B,C,D \in V.$$
\end{rem}
\vspace{0.1cm}

In general a map $R \in \mathcal{R}(V)$ is not $\epsilon$-alternating but we can define the Bianchi map $\beta : \mathcal{R}(V) \rightarrow \mathcal{R}(V)$ which has the property that $\beta(R) \in Alt_{\epsilon}^4(V)$ for all $R \in \mathcal{R}(V)$.
\vspace{0.1cm}

\begin{df} Let $V$ be a $\Gamma$-graded vector space.
\begin{enumerate}[label=\alph*)]
\item The Bianchi map $\beta : \mathcal{R}(V) \rightarrow \mathcal{R}(V)$ is defined by
$$\beta(R)(A,B,C,D):=R(A,B,C,D)+\epsilon(A,B+C)R(B,C,A,D)+\epsilon(A+B,C)R(C,A,B,D)$$
for all $A,B,C,D \in V$.
\item The vector space $Ker(\beta)$ is called the space of formal curvature tensors of $V$.
\end{enumerate}
\end{df}
\vspace{0.1cm}

\begin{pp} \label{pp projection curvature} Let $V$ be a $\Gamma$-graded vector space and let $R \in \mathcal{R}(V)$. Then $\frac{1}{3}\beta(R)$ is the projection of $R$ onto $Alt_{\epsilon}^4(V)$ parallel to $Ker(\beta)$.
\end{pp}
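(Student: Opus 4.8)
The plan is to show that the endomorphism $P:=\tfrac{1}{3}\beta$ of $\mathcal{R}(V)$ is idempotent with image $Alt^4_{\epsilon}(V)$ and kernel $Ker(\beta)$. Once this is established the proposition follows from the elementary fact that any idempotent $P$ on a vector space gives a direct sum decomposition $\mathcal{R}(V)=Im(P)\oplus Ker(P)$ for which $P$ is precisely the projection onto $Im(P)$ parallel to $Ker(P)$. Thus everything reduces to three points: (i) $Alt^4_{\epsilon}(V)\subseteq\mathcal{R}(V)$; (ii) $\beta$ acts as multiplication by $3$ on $Alt^4_{\epsilon}(V)$; and (iii) $\beta(\mathcal{R}(V))\subseteq Alt^4_{\epsilon}(V)$, which is the property of $\beta$ already recorded above.

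For (i), an $\epsilon$-alternating form is antisymmetric under any adjacent transposition (Remark \ref{rem def equivalente alternating map}), which is exactly \eqref{R prop 1}; moreover the permutation $(A,B,C,D)\mapsto(C,D,A,B)$ is a product of four adjacent transpositions whose accumulated sign and commutation factors multiply to $\epsilon(A+B,C+D)$, giving \eqref{R prop 2}. The main computation is (ii). Let $R\in Alt^4_{\epsilon}(V)$. Rotating the first three arguments by two adjacent transpositions and using Remark \ref{rem def equivalente alternating map} yields $\epsilon(A,B+C)R(B,C,A,D)=R(A,B,C,D)$ and $\epsilon(A+B,C)R(C,A,B,D)=R(A,B,C,D)$, since in each case the $\epsilon$-factors produced by the swaps are inverse to those already attached to the corresponding term in the definition of $\beta$. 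Substituting into the definition of $\beta$ shows that all three terms coincide, so $\beta(R)=3R$.

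With these in hand the conclusion is immediate. By (ii) and (iii), for any $R\in\mathcal{R}(V)$ we have $\beta(R)\in Alt^4_{\epsilon}(V)$ and hence $\beta^2(R)=3\beta(R)$; thus $\beta^2=3\beta$ and $P^2=\tfrac{1}{9}\beta^2=\tfrac{1}{3}\beta=P$, so $P$ is idempotent. Its kernel is $Ker(\beta)$, and its image equals $Alt^4_{\epsilon}(V)$: it is contained in $Alt^4_{\epsilon}(V)$ by (iii), and it contains $Alt^4_{\epsilon}(V)$ because $P$ restricts to the identity there by (ii). The decomposition recalled above then identifies $P=\tfrac{1}{3}\beta$ with the projection of $\mathcal{R}(V)$ onto $Alt^4_{\epsilon}(V)$ parallel to $Ker(\beta)$. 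The only delicate step is the bookkeeping of commutation factors in (ii), but this is a short and self-cancelling computation.
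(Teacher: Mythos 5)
Your proof is correct and follows essentially the same route as the paper's: both rest on the two key computations that $\beta(R)\in Alt^4_{\epsilon}(V)$ for every $R\in\mathcal{R}(V)$ and that $\beta(R)=3R$ for $R\in Alt^4_{\epsilon}(V)$, from which the projection property is immediate. Your write-up merely makes explicit the idempotency bookkeeping ($\beta^2=3\beta$, so $P=\tfrac{1}{3}\beta$ satisfies $P^2=P$ with image $Alt^4_{\epsilon}(V)$ and kernel $Ker(\beta)$, using $char(k)\neq 3$) and the inclusion $Alt^4_{\epsilon}(V)\subseteq\mathcal{R}(V)$, both of which the paper leaves implicit.
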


\begin{proof}
Let $A,B,C,D \in V$. We show that $\beta(R)$ is an $\epsilon$-alternating multilinear map. By Remark \ref{rem def equivalente alternating map} $\beta(R)$ is $\epsilon$-alternating if and only if
\begin{align*}
\beta(R)(A,B,C,D)&=-\epsilon(A,B)\beta(R)(B,A,C,D), \\
\beta(R)(A,B,C,D)&=-\epsilon(B,C)\beta(R)(A,C,B,D), \\
\beta(R)(A,B,C,D)&=-\epsilon(C,D)\beta(R)(A,B,D,C)
\end{align*}
for all $A,B,C,D \in V$ and these identities can be shown by straightforward calculations. Moreover, if $R\in Alt_{\epsilon}^4(V)$, we have
\begin{align*}
\epsilon(A,B+C)R(B,C,A,D)&=R(A,B,C,D), \\
\epsilon(A+B,C)R(C,A,B,D)&=R(A,B,C,D),
\end{align*}
and so $\beta(R)=3R$. Hence the map $R \mapsto \frac{1}{3}\beta(R)$ for $R \in \mathcal{R}(V)$ is a projection.
\end{proof}
\vspace{0.1cm}

We now show that, to the data of Theorem \ref{thm N(mu+phi)=0} one can associate an element of $\mathcal{R}(V)$ and interpret the vanishing condition therein in terms of the Bianchi map $\beta$.
\vspace{0.1cm}

\begin{df}
Let  $\rho : \gg \rightarrow \so_{\epsilon}(V,(\phantom{v} , \phantom{v}))$ be a finite-dimensional $\epsilon$-orthogonal representation of a finite-dimensional $\epsilon$-quadratic colour Lie algebra $(\gg,B_{\gg})$, let $\mu \in Alt_{\epsilon}^2(V,\gg)$ be its moment map and let $\phi \in Alt_{\epsilon}^2(V,V)$ be of degree $0$ and satisfy \eqref{phi g-inv} and \eqref{phi ()-inv}. Let $\gt:=\gg\oplus V$, let $B_{\gt}:=B_{\gg}\perp (\phantom{v},\phantom{v})$ and let $\lbrace \phantom{v},\phantom{v} \rbrace \in Alt^2_{\epsilon}(\gt,\gt)$ be the unique map which extends the bracket of $\gg$, the action of $\gg$ on $V$ and such that
$$\lbrace v,w\rbrace=\mu(v,w)+\phi(v,w) \qquad \forall v,w \in V.$$
Define the multilinear map $R_{\mu+\phi} : V\times V \times V \times V \rightarrow k$ by
$$R_{\mu+\phi}(A,B,C,D):=B_{\gt}(\lbrace \lbrace A,B \rbrace, C\rbrace ,D) \qquad \forall A,B,C,D \in V.$$
\end{df}
\vspace{0.1cm}

\begin{pp} \label{pp norme egal bianchi}
With the notation above:
\begin{enumerate}[label=\alph*)]
\item The map $R_{\mu+\phi}$ is an element of $\mathcal{R}(V)$.
\item We have $N_{B_{\gt}}(\mu+\phi)=2\beta(R_{\mu+\phi})$.
\item The following are equivalent:
\begin{enumerate}[label=\roman*)]
\item $(\gt,B_{\gt},\lbrace \phantom{v},\phantom{v} \rbrace)$ is an $\epsilon$-quadratic colour Lie algebra.
\item $\beta(R_{\mu+\phi})=0$.
\end{enumerate}
\end{enumerate}
\end{pp}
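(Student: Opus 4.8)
The plan is to reduce all three parts to the single observation that $R_{\mu+\phi}$ is nothing but the composite of $\psi:=\mu+\phi\in Alt^2_{\epsilon}(V,\gt)$ with the form $B_{\gt}$. Precisely, I would first establish
\begin{equation*}
R_{\mu+\phi}(A,B,C,D)=B_{\gt}(\psi(A,B),\psi(C,D)) \qquad \forall A,B,C,D\in V.
\end{equation*}
This is not a new computation: writing $\lbrace A,B\rbrace=\mu(A,B)+\phi(A,B)$ with $\mu(A,B)\in\gg$ and $\phi(A,B)\in V$, expanding $\lbrace\lbrace A,B\rbrace,C\rbrace$, and pairing with $D\in V$ (so that only the $V$-component survives, since $B_{\gg}\perp(\phantom{v},\phantom{v})$) reproduces exactly the chain of equalities near the end of the proof of Theorem \ref{thm N(mu+phi)=0}, where $B_{\gt}(\psi(v_1,v_2),\psi(v_3,v_4))$ is shown to equal $(\lbrace\lbrace v_1,v_2\rbrace,v_3\rbrace,v_4)$. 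The only inputs are Definition \ref{df moment map} and property \eqref{phi ()-inv}.

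Granting this identity, part a) is immediate. Since $\psi$ is $\epsilon$-alternating, $\psi(A,B)=-\epsilon(A,B)\psi(B,A)$ yields \eqref{R prop 1}; and since $B_{\gt}$ is $\epsilon$-symmetric and $\psi$ has degree $0$, so that $|\psi(A,B)|=|A|+|B|$, interchanging the two arguments of $B_{\gt}$ produces precisely the factor $\epsilon(A+B,C+D)$ required by \eqref{R prop 2}. Hence $R_{\mu+\phi}\in\mathcal{R}(V)$.

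For part b) I would substitute the identity into the definition of the Bianchi map:
\begin{align*}
\beta(R_{\mu+\phi})(A,B,C,D)=&\,B_{\gt}(\psi(A,B),\psi(C,D))+\epsilon(A,B+C)B_{\gt}(\psi(B,C),\psi(A,D))\\
&+\epsilon(A+B,C)B_{\gt}(\psi(C,A),\psi(B,D)).
\end{align*}
Rewriting the third term by $\psi(C,A)=-\epsilon(C,A)\psi(A,C)$ and using $\epsilon(A+B,C)\epsilon(C,A)=\epsilon(B,C)$ converts it into $-\epsilon(B,C)B_{\gt}(\psi(A,C),\psi(B,D))$. The resulting three terms are exactly those appearing in the simplified expression for $N_{B_{\gt}}(\psi)=\psi\wedge_{B_{\gt}}\psi$ already computed in the proof of Theorem \ref{thm N(mu+phi)=0}, where that norm was shown to equal twice their sum. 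This gives $N_{B_{\gt}}(\mu+\phi)=2\beta(R_{\mu+\phi})$. Finally, part c) follows by combining part b) with the equivalence i)$\Leftrightarrow$ii) of Theorem \ref{thm N(mu+phi)=0}: since $char(k)\neq 2$, we have $N_{B_{\gt}}(\mu+\phi)=0$ if and only if $\beta(R_{\mu+\phi})=0$.

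I expect no conceptual obstacle here; the whole statement is a repackaging of Theorem \ref{thm N(mu+phi)=0}. The only delicate point is the bookkeeping of commutation factors, first in extracting the composite formula for $R_{\mu+\phi}$ and then in matching the $\epsilon$-prefactors of $\beta(R_{\mu+\phi})$ against those in the shuffle expansion of $\psi\wedge_{B_{\gt}}\psi$. To keep this under control I would quote the already-verified norm computation from Theorem \ref{thm N(mu+phi)=0} rather than redo the shuffle sum.
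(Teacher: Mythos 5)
Your proposal is correct and takes essentially the same route as the paper: the paper's own proof likewise rests on the identity $R_{\mu+\phi}(A,B,C,D)=B_{\gg}(\mu(A,B),\mu(C,D))+(\phi(A,B),\phi(C,D))=B_{\gt}(\psi(A,B),\psi(C,D))$ (the chain of equalities in its part $a)$), quotes the already-simplified shuffle expansion of $N_{B_{\gt}}(\mu+\phi)$ from the proof of Theorem \ref{thm N(mu+phi)=0} for part $b)$, and deduces part $c)$ from that theorem together with $char(k)\neq 2$. Making this identity the single explicit organizing step, and your $\epsilon$-bookkeeping (e.g.\ $\epsilon(A+B,C)\epsilon(C,A)=\epsilon(B,C)$ to convert the third Bianchi term), are a faithful repackaging of the paper's argument, not a different one.
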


\begin{proof}
$a)$ Since the bracket is $\epsilon$-antisymmetric, \eqref{R prop 1} is satisfied. For $A,B,C,D \in V$, we have
\begin{align*}
R_{\mu+\phi}(A,B,C,D)&=(\mu(A,B)(C),D)+(\phi(\phi(A,B),C),D)\\
&=B_{\gg}(\mu(A,B),\mu(C,D))+(\phi(A,B),\phi(C,D))\\
&=\epsilon(A+B,C+D)\Big(B_{\gg}(\mu(C,D),\mu(A,B))+(\phi(C,D),\phi(A,B))\Big)\\
&=\epsilon(A+B,C+D)\Big((\mu(C,D)(A),B)+(\phi(\phi(C,D),A),B)\Big)\\
&=\epsilon(A+B,C+D)R_{\mu+\phi}(C,D,A,B)
\end{align*}
and then Equation \eqref{R prop 2} is satisfied.
\vspace{0.2cm}

\noindent
$b)$ For $A,B,C,D\in V$, we have
\begin{align*}
&\beta(R_{\mu+\phi})(A,B,C,D)\\
&=R_{\mu+\phi}(A,B,C,D)+\epsilon(A,B+C)R_{\mu+\phi}(B,C,A,D)+\epsilon(A+B,C)R_{\mu+\phi}(C,A,B,D)\\
&=\Big(\lbrace \lbrace A,B \rbrace, C\rbrace+\epsilon(A,B+C)\lbrace \lbrace B,C \rbrace, A\rbrace+\epsilon(A+B,C)\lbrace \lbrace C,A \rbrace, B\rbrace ,D\Big)\\
&=\epsilon(A,C)\Big(\epsilon(C,A)\lbrace\lbrace A,B\rbrace,C\rbrace+\epsilon(A,B)\lbrace\lbrace B,C\rbrace,A\rbrace+\epsilon(B,C)\lbrace\lbrace C,A\rbrace,B\rbrace ,D\Big).
\end{align*}
As we have seen in the proof of the theorem \ref{thm N(mu+phi)=0} we have
$$N_{B_{\gt}}(\mu+\phi)(A,B,C,D)=2\epsilon(A,C)(\epsilon(C,A)\lbrace\lbrace A,B\rbrace,C\rbrace+\epsilon(A,B)\lbrace\lbrace B,C\rbrace,A\rbrace+\epsilon(B,C)\lbrace\lbrace C,A\rbrace,B\rbrace,D)$$
and so $N_{B_{\gt}}(\mu+\phi)=2\beta(R_{\mu+\phi})$.
\vspace{0.2cm}

\noindent
$c)$ Follows from Theorem \ref{thm N(mu+phi)=0} and $b)$ above.
\end{proof}

By Example \ref{ex nat rep of so is Lie type}, Theorem \ref{thm N(mu+phi)=0} and this proposition, $\beta(R_{\mu_{can}})=0$. Since $Id-\frac{1}{3}\beta$ is the projection onto $Ker(\beta)$ by Proposition \ref{pp projection curvature}, it is natural to ask which $\epsilon$-orthogonal representations have curvature $R_{\mu}$ such that $R_{\mu}-\frac{1}{3}\beta(R_{\mu})$ is equal to $R_{\mu_{can}}$.
\vspace{0.1cm}

\begin{df}
Let $\rho : \gg \rightarrow \so_{\epsilon}(V,(\phantom{v},\phantom{v}))$ be a finite-dimensional $\epsilon$-orthogonal representation  of a finite-dimensional $\epsilon$-quadratic colour Lie algebra. The representation $V$ is special if its moment map $\mu$ satisfies
$$R_{\mu}-\frac{1}{3}\beta(R_{\mu})=R_{\mu_{can}}.$$
\end{df}
\vspace{0.1cm}

We now characterise special $\epsilon$-orthogonal representations in terms of their moment map.

\begin{pp} \label{equivalent conditions CS}
Let $\rho : \gg \rightarrow \so_{\epsilon}(V,(\phantom{v},\phantom{v}))$ be a finite-dimensional $\epsilon$-orthogonal representation of a finite-dimensional $\epsilon$-quadratic colour Lie algebra. The following are equivalent:
\begin{enumerate}[label=\alph*)]
\item $V$ is a special $\epsilon$-orthogonal representation.
\item $\mu(A,B)(C)+\epsilon(B,C)\mu(A,C)(B)=(A,B)C+\epsilon(B,C)(A,C)B-2(B,C)A$ 

$\forall A,B,C \in V$. \label{CS}
\item \label{CS 2} $\mu(A,B)(C)+\epsilon(B,C)\mu(A,C)(B)=\mu_{can}(A,B)(C)+\epsilon(B,C)\mu_{can}(A,C)(B)$

$\forall A,B,C \in V$.
\end{enumerate}
\end{pp}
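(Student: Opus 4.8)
The plan is to prove the proposition by first dispatching the equivalence \ref{CS} $\Leftrightarrow$ \ref{CS 2} through a direct computation, and then establishing $a)\Leftrightarrow$ \ref{CS 2} using the projection description of $\beta$ furnished by Proposition \ref{pp projection curvature}. For \ref{CS} $\Leftrightarrow$ \ref{CS 2}, I would substitute the explicit formula $\mu_{can}(u,v)(w)=\epsilon(v,w)(u,w)v-(v,w)u$ of Proposition \ref{pp moment map canonique} into the right-hand side of \ref{CS 2}. Computing $\mu_{can}(A,B)(C)+\epsilon(B,C)\mu_{can}(A,C)(B)$ and simplifying with the commutation-factor identity $\epsilon(B,C)\epsilon(C,B)=1$ together with the $\epsilon$-symmetry $(C,B)=\epsilon(C,B)(B,C)$, the expression collapses exactly to $(A,B)C+\epsilon(B,C)(A,C)B-2(B,C)A$. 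This is routine bookkeeping and shows the two right-hand sides coincide, so \ref{CS} and \ref{CS 2} express the same condition on $\mu$.

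The substance lies in $a)\Leftrightarrow$ \ref{CS 2}. First I would record that, for the moment map alone (i.e. $\phi\equiv 0$), one has $R_{\mu}(A,B,C,D)=(\mu(A,B)(C),D)$ and likewise $R_{\mu_{can}}(A,B,C,D)=(\mu_{can}(A,B)(C),D)$, since $\lbrace A,B\rbrace=\mu(A,B)\in\gg$ acts on $C\in V$ and $B_{\gt}$ restricts to $(\phantom{v},\phantom{v})$ on $V$. By Proposition \ref{pp projection curvature}, $\frac{1}{3}\beta$ is the projection of $\mathcal{R}(V)$ onto $Alt^4_{\epsilon}(V)$ parallel to $Ker(\beta)$, so $Id-\frac{1}{3}\beta$ is the complementary projection and $Ker\big(Id-\frac{1}{3}\beta\big)=Alt^4_{\epsilon}(V)$. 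Because $\beta(R_{\mu_{can}})=0$, the tensor $R_{\mu_{can}}$ lies in $Ker(\beta)$ and is fixed by $Id-\frac{1}{3}\beta$. Hence the defining condition of a special representation, $R_{\mu}-\frac{1}{3}\beta(R_{\mu})=R_{\mu_{can}}$, is equivalent to $(Id-\frac{1}{3}\beta)(R_{\mu}-R_{\mu_{can}})=0$, that is, to the membership $R_{\mu}-R_{\mu_{can}}\in Alt^4_{\epsilon}(V)$.

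It then remains to turn this membership into \ref{CS 2}. Writing $T:=R_{\mu}-R_{\mu_{can}}$, both summands belong to $\mathcal{R}(V)$, so $T$ already carries the $\epsilon$-antisymmetries under the transpositions $(1,2)$ and $(3,4)$; by Remark \ref{rem def equivalente alternating map} it lies in $Alt^4_{\epsilon}(V)$ if and only if it is in addition $\epsilon$-antisymmetric under the middle transposition $(2,3)$, i.e. $T(A,B,C,D)=-\epsilon(B,C)T(A,C,B,D)$. Since $T(A,B,C,D)=\big(\mu(A,B)(C)-\mu_{can}(A,B)(C),D\big)$ and $(\phantom{v},\phantom{v})$ is non-degenerate, this identity holding for all $D$ is equivalent to $\mu(A,B)(C)+\epsilon(B,C)\mu(A,C)(B)=\mu_{can}(A,B)(C)+\epsilon(B,C)\mu_{can}(A,C)(B)$, which is precisely \ref{CS 2}.

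The main obstacle, and really the only conceptual point, is the reduction of the four-index condition defining ``special'' to the three-index identity \ref{CS}: one must correctly identify $Ker\big(Id-\frac{1}{3}\beta\big)$ with $Alt^4_{\epsilon}(V)$ and then observe that, since a tensor of $\mathcal{R}(V)$ already satisfies the pair-antisymmetries, imposing $\epsilon$-alternation amounts to imposing only the single transposition $(2,3)$. Once this is seen, everything else follows from the explicit formula for $\mu_{can}$ and the non-degeneracy of the form.
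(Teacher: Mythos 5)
Your proof is correct, but your treatment of the equivalence between $a)$ and $c)$ takes a genuinely different route from the paper's. The paper argues computationally in two separate directions: for $b)\Rightarrow a)$ it expands $R_{\mu}-\frac{1}{3}\beta(R_{\mu})$ explicitly, applies the identity of $b)$ twice and recognises $R_{\mu_{can}}$; for $a)\Rightarrow b)$ it symmetrises in the second and third slots, using that this symmetrisation kills the $\epsilon$-alternating component $\frac{1}{3}\beta(R_{\mu})$, and concludes by non-degeneracy. You instead package everything into one biconditional chain: since $\frac{1}{3}\beta$ is idempotent with image $Alt^4_{\epsilon}(V)$ and kernel $Ker(\beta)$ (Proposition \ref{pp projection curvature}), you get $Ker(Id-\frac{1}{3}\beta)=Alt^4_{\epsilon}(V)$, and together with $\beta(R_{\mu_{can}})=0$ --- a fact the paper records just before defining special representations, so you may legitimately quote it --- specialness becomes exactly the membership $R_{\mu}-R_{\mu_{can}}\in Alt^4_{\epsilon}(V)$. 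Your reduction of this membership, for a tensor already lying in $\mathcal{R}(V)$ (Proposition \ref{pp norme egal bianchi} with $\phi=0$ covers $R_{\mu}$, and it also covers $R_{\mu_{can}}$), to $\epsilon$-antisymmetry under the single middle transposition via Remark \ref{rem def equivalente alternating map} is the right observation, and non-degeneracy of $(\phantom{v},\phantom{v})$ then yields $c)$. What your route buys is symmetry (one chain of equivalences rather than two implications) and a conceptual reformulation of ``special'' as a linear condition on the difference tensor; what it costs is the extra input $\beta(R_{\mu_{can}})=0$, which the paper's proof of this proposition does not need (its $b)\Rightarrow a)$ computation in effect re-derives it). Note also the kinship between the two arguments: the paper's symmetrisation $R(A,B,C,D)+\epsilon(B,C)R(A,C,B,D)$ is precisely the operator that detects failure of your middle-transposition condition, so the proofs diverge in organisation rather than in underlying mechanism. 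Finally, your handling of $b)\Leftrightarrow c)$ --- substituting Equation \eqref{canonical moment map} and simplifying with $\epsilon(B,C)\epsilon(C,B)=1$ and $(C,B)=\epsilon(C,B)(B,C)$ --- coincides with the paper's one-line reduction and is correct.
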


\begin{proof}
Property $b)$ is equivalent to $c)$ by Equation \eqref{canonical moment map}. Let $A,B,C,D\in V$. We now show that $b)$ implies $a)$. We have:
\begin{align*}
&R_{\mu}(A,B,C,D)-\frac{1}{3}\beta(R_{\mu})(A,B,C,D)\\
&=\frac{1}{3}\Big(2\mu(A,B)(C)-\epsilon(A,B+C)\mu(B,C)(A)+\epsilon(B,C)\mu(A,C)(B) ,D\Big)\\
&=\frac{1}{3}\Big(\mu(A,B)(C)+\epsilon(B,C)\mu(A,C)(B)-\epsilon(A,B)(\mu(B,A)(C)+\epsilon(A,C)\mu(B,C)(A))  ,D\Big).
\end{align*}
Now, using $b)$ twice, we have
\begin{align*}
\mu(A,B)(C)+\epsilon(B,C)\mu(A,C)(B)&=(A,B)C+\epsilon(B,C)(A,C)B-2(B,C)A,\\
\mu(B,A)(C)+\epsilon(A,C)\mu(B,C)(A)&=(B,A)C+\epsilon(A,C)(B,C)A-2(A,C)B
\end{align*}
and hence:
\begin{align*}
&R_{\mu}(A,B,C,D)-\frac{1}{3}\beta(R_{\mu})(A,B,C,D)\\
&=\frac{1}{3}\Big((A,B)C+\epsilon(B,C)(A,C)B-2(B,C)A-\epsilon(A,B)((B,A)C+\epsilon(A,C)(B,C)A-2(A,C)B) ,D\Big)\\
&=\Big(\mu_{can}(A,B)(C),D\Big)\\
&=R_{\mu_{can}}(A,B,C,D).
\end{align*}
This proves $a)$. To show that $a)$ implies $b)$, suppose that $R_{\mu}-\frac{1}{3}\beta(R_{\mu})=R_{\mu_{can}}$. Then
\begin{align*}
&\Big((A,B)C+\epsilon(B,C)(A,C)B-2(B,C)A,D \Big)\\
&=\Big(\mu_{can}(A,B)(C)+\epsilon(B,C)\mu_{can}(A,C)(B),D \Big)\\
&=R_{\mu_{can}}(A,B,C,D)+\epsilon(B,C)R_{\mu_{can}}(A,C,B,D)\\
&=\Big(R_{\mu}-\frac{1}{3}\beta(R_{\mu})\Big)(A,B,C,D)+\epsilon(B,C)\Big(R_{\mu}-\frac{1}{3}\beta(R_{\mu})\Big)(A,C,B,D)\\
&=R_{\mu}(A,B,C,D)+\epsilon(B,C)R_{\mu}(A,C,B,D)\\
&=\Big(\mu(A,B)(C)+\epsilon(B,C)\mu(A,C)(B),D \Big),
\end{align*}
and since $(\phantom{v},\phantom{v})$ is non-degenerate, we obtain $b)$.
\end{proof}

\begin{rem}
The equivalent conditions $b)$ and $c)$ define the notion of special $\epsilon$-orthogonal representations over fields of characteristic three and hence certain representations considered in \cite{Elduque06} are special $\epsilon$-orthogonal representations.
\end{rem}

\begin{ex}
Let $\gg$ be a Lie algebra and let $(V,(\phantom{v},\phantom{v}))$ be a symplectic representation of $\gg$. It follows from Proposition \ref{equivalent conditions CS} that $\gg\rightarrow \so_{\epsilon}(V,(\phantom{v},\phantom{v}))=\sp(V,(\phantom{v},\phantom{v}))$ is a special $\epsilon$-orthogonal representation if and only if it is a special symplectic representation in the sense of \cite{StantonSlupinski15}.
\end{ex}
\vspace{0.2cm}

We now investigate when the tensor product of two $\epsilon$-orthogonal representations is of colour $\mathbb{Z}_2$-Lie type if one of them, $W$, is the fundamental representation of an $\epsilon$-orthogonal colour Lie algebra. It turns out that a necessary and sufficient condition for this to be the case is that the other representation, $V$, is also the fundamental representation of an $\epsilon$-orthogonal colour Lie algebra unless $dim(W)=1$ or $dim(W)=2$. If $dim(W)=1$, the necessary and sufficient condition is that $V$ is of colour $\mathbb{Z}_2$-Lie type and if $dim(W)=2$, that $V$ is a special $\epsilon$-orthogonal representation.

\begin{thm} \label{rep tensor prod with sl2}
Suppose that the representation $k^2$ of $\sl(2,k)$ is an $\epsilon$-orthogonal representation with respect to $(\Gamma,\epsilon)$. Let $\rho : \gg \rightarrow \so_{\epsilon}(V,(\phantom{v},\phantom{v})_V)$ be a finite-dimensional faithful $\epsilon$-orthogonal representation of a finite-dimensional $\epsilon$-quadratic colour Lie algebra and let $W$ be a finite-dimensional $\Gamma$-graded vector space together with a non-degenerate $\epsilon$-symmetric bilinear form $(\phantom{v},\phantom{v})_W$. Then the $\epsilon$-orthogonal representation (see Section \ref{section moment map})
$$\gg\oplus \so_{\epsilon}(W,(\phantom{v},\phantom{v})_W) \rightarrow \so_{\epsilon}(V\otimes W,(\phantom{v},\phantom{v})_{V\otimes W})$$
is of colour $\mathbb{Z}_2$-Lie type if and only if one of the following holds:
\begin{enumerate}[label=\alph*)]
\item $\gg$ is isomorphic to $\so_{\epsilon}(V,(\phantom{v},\phantom{v})_V)$ ;
\item $dim(W)=1$, $(\phantom{v},\phantom{v})_W$ is symmetric and $\gg \rightarrow \so_{\epsilon}(V,(\phantom{v},\phantom{v})_V)$ is of colour $\mathbb{Z}_2$-Lie type ;
\item $dim(W)=2$, $(\phantom{v},\phantom{v})_W$ is antisymmetric and $\gg \rightarrow \so_{\epsilon}(V,(\phantom{v},\phantom{v})_V)$ is special $\epsilon$-orthogonal.
\end{enumerate}
\end{thm}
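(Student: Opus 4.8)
The plan is to reduce the statement to the vanishing of the norm of the moment map of the tensor-product representation, compute that norm via \eqref{moment map of tens prod}, and then read off the trichotomy by isolating two ``obstruction tensors''. Write $\hh:=\so_{\epsilon}(W,(\phantom{v},\phantom{v})_W)$ and equip it with $B_{\hh}(f,g):=-\tfrac12 Tr_{\epsilon}(fg)$, so that by Proposition~\ref{pp moment map canonique} its moment map is the canonical one, which I denote $\mu_W$. By the corollary following Theorem~\ref{thm N(mu+phi)=0} (the case $\phi\equiv0$), the representation of $\gg\oplus\hh$ on $V\otimes W$ is of colour $\mathbb{Z}_2$-Lie type if and only if the norm of $\mu_{V\otimes W}$ with respect to $B_{\gg}\perp B_{\hh}$ vanishes, equivalently (Proposition~\ref{pp norme egal bianchi}) if and only if $\beta(R_{\mu_{V\otimes W}})=0$. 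Using \eqref{moment map of tens prod} and \eqref{tens prod of rep} together with the orthogonality $B_{\gg}\perp B_{\hh}$ (which kills all cross pairings between the $\gg$- and $\hh$-components of $\mu_{V\otimes W}$), I would first show that $R_{\mu_{V\otimes W}}$ decomposes as $\tilde\Phi_V+\tilde\Phi_W$, where $\tilde\Phi_V$ couples the curvature $R_{\mu}$ of $V$ to the product $(\phantom{v},\phantom{v})_W\otimes(\phantom{v},\phantom{v})_W$ and $\tilde\Phi_W$ couples the canonical curvature $R_{\mu_W}$ to $(\phantom{v},\phantom{v})_V\otimes(\phantom{v},\phantom{v})_V$, with explicit commutation-factor signs.

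The heart of the argument is the expansion of $\beta(\tilde\Phi_V+\tilde\Phi_W)$. Since $\beta$ cyclically permutes the three pairs $v_1\otimes w_1,v_2\otimes w_2,v_3\otimes w_3$, it acts at once on the $V$-arguments of $R_{\mu}$ and on the surviving forms $(\phantom{v},\phantom{v})_W$, so the terms do not factorise and the computation produces genuinely mixed contractions. To organise these, I would use that $W$, being the fundamental representation of $\hh$, is automatically of colour $\mathbb{Z}_2$-Lie type (Example~\ref{ex nat rep of so is Lie type}), whence $\beta(R_{\mu_W})=0$, and I would use Proposition~\ref{equivalent conditions CS} to replace the $(B,C)$-symmetrisation of $\mu$ by that of $\mu_{can}$. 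The outcome I expect is that $\beta(R_{\mu_{V\otimes W}})$ collapses to a linear combination
$$\beta(R_{\mu_{V\otimes W}})=c_1\,\widetilde{\beta(R_{\mu})}+c_2\,\widetilde{\Big(R_{\mu}-\tfrac13\beta(R_{\mu})-R_{\mu_{can}}\Big)},$$
where the two tensors in brackets are precisely the obstruction to $V$ being of colour $\mathbb{Z}_2$-Lie type and the obstruction to $V$ being special, each suitably embedded into $Alt^4_{\epsilon}(V\otimes W)$, and $c_1,c_2$ are explicit scalars depending only on $\dim(W)$ and on whether $(\phantom{v},\phantom{v})_W$ is symmetric or antisymmetric (through its self-contraction). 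Because these two embedded tensors depend differently on the $W$-variables, they lie in distinct components of $Alt^4_{\epsilon}(V\otimes W)$, so vanishing of the sum forces $c_1\,\widetilde{\beta(R_{\mu})}=0$ and $c_2\,\widetilde{(\,\cdots)}=0$ separately.

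The trichotomy then follows by inspecting the scalars. For $(\phantom{v},\phantom{v})_W$ symmetric with $\dim(W)=1$ one has $\hh=0$ and $V\otimes W\cong V$, the coefficient $c_2$ of the special obstruction drops out, and the condition reduces to $\beta(R_{\mu})=0$, i.e. $V$ of colour $\mathbb{Z}_2$-Lie type, giving (b); for $(\phantom{v},\phantom{v})_W$ antisymmetric with $\dim(W)=2$ one has $\hh=\sp(W)=\sl(2,k)$, the coefficient $c_1$ vanishes, and the surviving condition is $R_{\mu}-\tfrac13\beta(R_{\mu})=R_{\mu_{can}}$, i.e. $V$ special, giving (c); and in all remaining cases both $c_1$ and $c_2$ are nonzero, forcing $\beta(R_{\mu})=0$ and $R_{\mu}=R_{\mu_{can}}$ simultaneously, hence $\rho(\mu(A,B))=\mu_{can}(A,B)$ by non-degeneracy of $(\phantom{v},\phantom{v})_V$; since $\mu_{can}$ is onto $\so_{\epsilon}(V,(\phantom{v},\phantom{v})_V)$ (Remark~\ref{rem mucan iso}), faithfulness then yields $\gg\cong\so_{\epsilon}(V,(\phantom{v},\phantom{v})_V)$, giving (a). The converse implications I would verify directly: case (a) is the symmetric-space decomposition $\so_{\epsilon}(V\oplus W)=\big(\so_{\epsilon}(V)\oplus\so_{\epsilon}(W)\big)\oplus(V\otimes W)$, while (b) and (c) follow by substituting ``$V$ of colour $\mathbb{Z}_2$-Lie type'' and ``$V$ special'' back into the displayed formula. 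The main obstacle is the sign and coefficient bookkeeping in the Bianchi expansion: one must compute $c_1,c_2$ precisely, confirm that the two obstruction tensors are genuinely independent inside $Alt^4_{\epsilon}(V\otimes W)$, and check that the self-contraction of $(\phantom{v},\phantom{v})_W$ makes $c_2$ vanish only in the one-dimensional symmetric case and $c_1$ vanish only in the two-dimensional antisymmetric case.
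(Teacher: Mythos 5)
Your reduction of the problem to $N_{B_{\gt}}(\mu_{V\otimes W})=0$, equivalently $\beta(R_{\mu_{V\otimes W}})=0$ (via the corollary to Theorem \ref{thm N(mu+phi)=0} and Proposition \ref{pp norme egal bianchi}), and the splitting $R_{\mu_{V\otimes W}}=\tilde\Phi_V+\tilde\Phi_W$ coming from \eqref{moment map of tens prod} and the orthogonality $B_{\gg}\perp B_{\hh}$, are both sound, and your trichotomy endpoint agrees with the theorem. The genuine gap is the pivotal claimed identity $\beta(R_{\mu_{V\otimes W}})=c_1\,\widetilde{\beta(R_{\mu})}+c_2\,\widetilde{\bigl(R_{\mu}-\tfrac13\beta(R_{\mu})-R_{\mu_{can}}\bigr)}$: no such two-term decomposition with scalar coefficients exists. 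Evaluating on decomposables $v_i\otimes w_i$, the Bianchi sum cyclically permutes the \emph{pairs}, so what one actually obtains is $\sum_{\alpha} T_{\alpha}(v_1,\dots,v_4)\,P_{\alpha}(w_1,\dots,w_4)$, a sum over the \emph{three} pair-partition products $P_{(12)(34)}=(w_1,w_2)_W(w_3,w_4)_W$, $P_{(13)(24)}$, $P_{(14)(23)}$ (up to commutation factors; the $\tilde\Phi_W$-contribution also expands in these, by \eqref{canonical moment map}), with three distinct $V$-side coefficient tensors $T_{\alpha}$. In particular, no self-contraction of $(\phantom{v},\phantom{v})_W$ ever appears --- all four $W$-vectors remain free arguments --- so the mechanism you invoke for the vanishing of $c_1$ or $c_2$ has nothing to act on; and your separation argument (``the two embedded tensors lie in distinct components, hence vanish separately'') fails precisely in the interesting cases, because the dependence on $W$ enters through linear relations among the $P_{\alpha}$: all three are proportional when $dim(W)=1$; they satisfy exactly one Pl\"ucker-type relation $P_{(12)(34)}-P_{(13)(24)}+P_{(14)(23)}=0$ (with commutation factors) when $dim(W)=2$ and $(\phantom{v},\phantom{v})_W$ is antisymmetric; and they are linearly independent as soon as $dim(W_0)+\tfrac12 dim(W_1)\geq 2$.

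With the corrected expansion the theorem does follow, but the conclusions read differently from your plan: in the generic case the vanishing of the coefficient of a \emph{single} pairing already forces $\rho(\mu_V(v,v'))=\mu_{can}(v,v')$ pointwise --- stronger on its face than the conjunction ``colour $\mathbb{Z}_2$-Lie type and special'', though equivalent to it --- after which surjectivity of $\mu_{can}$ (Remark \ref{rem mucan iso}) and faithfulness give (a); when $dim(W)=2$ the one relation leaves exactly the $(B,C)$-symmetrised condition \ref{CS} of Proposition \ref{equivalent conditions CS}, i.e.\ speciality; when $dim(W)=1$ the total collapse leaves $\beta(R_{\mu})=0$. This is in fact what the paper does, in dual form: instead of expanding a global tensor identity it evaluates the $\epsilon$-Jacobi identity on decomposable triples whose $W$-entries isolate one pairing at a time (choosing $(w,w')_W\neq 0$, $(w,w'')_W=(w',w'')_W=0$ in case (a), and $v\otimes p,\ v'\otimes p,\ v''\otimes q$ in a homogeneous symplectic basis in case (c)), which bypasses all bookkeeping over $Alt^4_{\epsilon}(V\otimes W)$. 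If you keep your route, you must actually compute the three $T_{\alpha}$ with all commutation factors and prove the three (in)dependence statements above; that is where the entire content of the proof lies, and your proposal currently treats it as an expected output. Finally, in your converse for (a), note that being of colour $\mathbb{Z}_2$-Lie type depends on the normalisation of $B_{\gg}$ (Remark \ref{modification de mu par une cst}: rescaling $B_{\gg}$ rescales $\mu_V$), so ``$\gg$ isomorphic to $\so_{\epsilon}(V,(\phantom{v},\phantom{v})_V)$'' must be read with the trace form normalised as in Proposition \ref{pp moment map canonique}.
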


\begin{proof} By \eqref{moment map of tens prod}, the moment map $\mu_{V\otimes W} \in Alt^2_{\epsilon}(V\otimes W,\gg\oplus\hh)$ satisfies for $v\otimes w, v'\otimes w'\in V\otimes W$
$$\mu_{V\otimes W}(v\otimes w, v'\otimes w')=\epsilon(w,v')\Big( \mu_V(v,v')(w,w')_W+\mu_W(w,w')(v,v')_V \Big)$$
where $\mu_V \in Alt_{\epsilon}^2(V,\gg)$ and $\mu_W \in Alt_{\epsilon}^2(W,\hh)$ are the corresponding moment maps of $V$ and $W$. Recall that we have a decomposition $W=W_0\oplus W_1$ (see Proposition \ref{pp V=V0+V1}).
\vspace{0.2cm}

\noindent
$a)$ Suppose that $dim(W_0)+\frac{dim(W_1)}{2}\geq 2$. Then there exists $w,w',w'' \in W$ such that
$$(w,w')_W\neq 0, \qquad (w,w'')_W=0, \qquad (w',w'')_W=0.$$
Let $v,v',v'' \in V$. We now compute the $\epsilon$-Jacobi identity for the three elements $v\otimes w,v'\otimes w', v''\otimes w''$ using Properties \eqref{tens prod of rep}, \eqref{moment map of tens prod} and the moment map of $W$ given by formula \eqref{canonical moment map}.
\vspace{0.2cm}

The first term of this identity is
\begin{align}
&\epsilon(v''+w'',v+w)\mu_{V\otimes W}(v\otimes w,v'\otimes w')(v''\otimes w'') \nonumber \\
&=\epsilon(v''+w'',v+w)\epsilon(w,v')\mu_V(v,v')(v'')\otimes (w,w')_Ww'' \nonumber \\
&=\epsilon(v'',v)\epsilon(v'',w)\epsilon(w'',v)\epsilon(w'',w)\epsilon(w,v')\mu_V(v,v')(v'')\otimes (w,w')_Ww'', \label{jac1}
\end{align}
the second term is
\begin{align}
&\epsilon(v+w,v'+w')\mu_{V\otimes W}(v'\otimes w',v''\otimes w'')(v\otimes w) \nonumber \\
&=\epsilon(v+w,v'-w)\epsilon(-w,v'')\epsilon(-w+w'',v)(v',v'')_V v' \otimes \epsilon(w'',w)\epsilon(w,w)(w,w')_W w'' \nonumber \\
&=\epsilon(v,v')\epsilon(w,v')\epsilon(v'',w)\epsilon(w'',v)\epsilon(w'',w)(v',v'')_V v' \otimes (w,w')_W w'' \label{jac2}
\end{align}
and the last term is
\begin{align}
&\epsilon(v'+w',v''+w'')\mu_{V\otimes W}(v''\otimes w'',v\otimes w)(v'\otimes w') \nonumber \\
&=\epsilon(v'-w,v''+w'')\epsilon(w'',v)\epsilon(w''+w,v')(v'',v)_Vv'\otimes-(w,w')_Ww'' \nonumber \\
&=-\epsilon(v',v'') \epsilon(v'',w)\epsilon(w'',w)\epsilon(w'',v)\epsilon(w,v')(v'',v)_Vv'\otimes(w,w')_Ww''. \label{jac3}
\end{align}
Taking the sum of Equations \eqref{jac1}, \eqref{jac2} and \eqref{jac3} we see that the $\epsilon$-Jacobi identity for the three elements $v\otimes w,v'\otimes w', v''\otimes w''$ is satisfied if and only if
$$\epsilon(v'',v)\mu_V(v,v')(v'')+\epsilon(v,v')(v',v'')_V v'-\epsilon(v',v'')(v'',v)_Vv'=0,$$
and by \eqref{canonical moment map} this is equivalent to
$$\mu_V(v,v')(v'')=\mu_{can}(v,v')(v'').$$
Since $\mu_{can} \in Alt_{\epsilon}^2\Big(V,\so_{\epsilon}(V,(\phantom{v},\phantom{v})_V)\Big)$ is surjective (see Remark \ref{rem mucan iso} and Proposition 3.9.8 of \cite{MeyerThesis}), $\rho ~ \circ ~ \mu \in Alt_{\epsilon}^2(V,\so_{\epsilon}(V,(\phantom{v},\phantom{v})_V))$ is surjective and then $\rho$ is surjective. Since $\rho$ is injective by assumption, $\gg$ is isomorphic to $\so_{\epsilon}(V,(\phantom{v},\phantom{v})_V)$.
\vspace{0.2cm}

\noindent
$b)$ Suppose that $dim(W)=1$. Then we have $\so_{\epsilon}(W,(\phantom{v},\phantom{v})_W)=\lbrace 0 \rbrace$ and the representation
$$\gg\oplus \so_{\epsilon}(W,(\phantom{v},\phantom{v})_W)\rightarrow \so_{\epsilon}(V\otimes W,(\phantom{v},\phantom{v})_{V\otimes W})$$
is of colour $\mathbb{Z}_2$-Lie type if and only if the representation $\gg\rightarrow \so_{\epsilon}(V,(\phantom{v},\phantom{v})_{V})$ is of colour $\mathbb{Z}_2$-Lie type.
\vspace{0.2cm}

\noindent
$c)$ Suppose that $dim(W)=2$ and $(\phantom{v},\phantom{v})_W$ is antisymmetric. Let $\lbrace p,q \rbrace$ be an homogeneous symplectic basis of $(W,(\phantom{v},\phantom{v})_W)$. There exists $\gamma \in \Gamma$ such that $\epsilon(\gamma,\gamma)=-1$, $|p|=\gamma$ and $|q|=-\gamma$ and let $\lbrace E,H,F \rbrace$ be a $\sl(2,k)$-triple of $\so_{\epsilon}(W,(\phantom{v},\phantom{v})_W)$. From Example \ref{sl2 moment map} we have
$$\mu_W(p,p)=-2E,\quad \mu_W(q,q)=2F, \quad \mu_W(p,q)=H.$$

Let $v,v',v'' \in V$. One can check that the $\epsilon$-Jacobi identities of $v\otimes p,v'\otimes p,v''\otimes p$ and $v\otimes q,v'\otimes q,v''\otimes q$ are satisfied. We now compute the $\epsilon$-Jacobi identity for $v\otimes p,v'\otimes p$ and $v''\otimes q$. Its first term is
$$\epsilon(v''+\gamma,v+\gamma)\mu_{V\otimes W}(v\otimes p,v'\otimes p)(v''\otimes q)=2\epsilon(v'',v)\epsilon(\gamma,v'')\epsilon(v,\gamma)^2 (v,v')_V v''\otimes p.$$
The second term is
$$\epsilon(v+\gamma,v'+\gamma)\mu_{V\otimes W}(v'\otimes p,v''\otimes q)(v\otimes p)=\epsilon(v+\gamma,v'+\gamma)\epsilon(\gamma,v'')\Big(\mu_V(v',v'')(v)\otimes p+(v',v'')_V v\otimes p \Big)$$
and the last term is
$$\epsilon(v'+\gamma,v''+\gamma)\mu_{V\otimes W}(v''\otimes q ,v\otimes p)(v'\otimes p)=\epsilon(v'+\gamma,v''-\gamma)\epsilon(v,\gamma)\Big( -\mu_V(v'',v)(v')\otimes p+(v'',v)_Vv'\otimes p \Big).$$
Hence, summing these three terms, $\epsilon$-Jacobi identity for $v\otimes p,v'\otimes p$ and $v''\otimes q$ is satisfied if and only if we have
$$2\epsilon(v'',v)\epsilon(\gamma,v'')\epsilon(v,\gamma)^2 (v,v')_V v'' +\epsilon(v+\gamma,v'+\gamma)(v',v'')_V v+\epsilon(v'+\gamma,v''-\gamma)\epsilon(v,\gamma)(v'',v)_Vv'$$
$$=-\epsilon(v+\gamma,v'+\gamma)\epsilon(\gamma,v'')\mu_V(v',v'')(v)+\epsilon(v'+\gamma,v''-\gamma)\epsilon(v,\gamma) \mu_V(v'',v)(v)$$
and now if we multiply both sides by $\epsilon(v''-\gamma,v'+\gamma)\epsilon(\gamma,v)$, this is equivalent to
$$-2(v,v')_V v''+\epsilon(v,v')(v'',v')_V v+(v'',v)_V v'=-\epsilon(v+v'',v')\mu_V(v',v'')(v)+\mu_V(v'',v)(v')$$
and this is equivalent to
\begin{equation*}
\mu_V(v,v')(v'')+\epsilon(v',v'')\mu_V(v,v'')(v')=(v,v')_V v''+\epsilon(v',v'')(v,v'')_V v'-2(v',v'')_V v. 
\end{equation*}
Finally, by a straightforward calculation, the $\epsilon$-Jacobi identity of $v\otimes p,v'\otimes q,v''\otimes q$ is satisfied if and only if the $\epsilon$-Jacobi identity of $v\otimes p,v'\otimes p,v''\otimes q$ is satisfied.
\end{proof}

\begin{rem}
Colour Lie algebras $\gt$ obtained from special $\epsilon$-orthogonal representations are not arbitrary. They have the particularity to admit a gradation of Heisenberg type (see \cite{Faulkner71} and \cite{StantonSlupinski15}), that is to say that there exists a $H$ in $\gt$ of degree $0$ such that
$$\gt=\gt_{-2}\oplus\gt_{-1}\oplus\gt_0\oplus\gt_1\oplus\gt_2$$
where $\gt_i=\lbrace x\in \gt ~ | ~ \lbrace H,x\rbrace=ix \rbrace$ and $dim(\gt_{-2})=dim(\gt_2)=1$. This property certainly characterises colour Lie algebras that we can obtain from special $\epsilon$-orthogonal representations (for a discussion of this problem in the frame of Lie algebras, see \cite{Faulkner71}, \cite{Cheng87} and \cite{StantonSlupinski15}).
\end{rem}

\section{Mathews identities for the covariants of a special $\epsilon$-orthogonal representation} \label{section covariants}

Let $\Gamma$ be an abelian group and let $\epsilon$ be a commutation factor of $\Gamma$. In this section we introduce and study a bilinear, a trilinear and a quadrilinear $\epsilon$-alternating multilinear map associated to a special $\epsilon$-orthogonal representation. These maps generalise the three classical covariants of the space of binary cubics (see \cite{Eisenstein1844}) which is a special $\epsilon$-orthogonal representation of the Lie algebra $\sl(2,k)$. We will prove a set of identities satisfied by them which generalise the Mathews identities for binary cubics (see \cite{Mathews11}) and their analogues for special symplectic representations of Lie algebras (see \cite{StantonSlupinski15}).
\vspace{0.1cm}

\begin{df} Let $\rho : \gg \rightarrow \so_{\epsilon}(V,(\phantom{v},\phantom{v}))$ be a finite-dimensional $\epsilon$-orthogonal representation of a finite-dimensional $\epsilon$-quadratic colour Lie algebra and let $\mu$ be its moment map. We define the multilinear maps $\psi : V\times V\times V \rightarrow V$ and $Q : V\times V\times V\times V \rightarrow k$ as follows:
\begin{align*}
\psi(v_1,v_2,v_3)&=\mu(v_1,v_2)(v_3)+\epsilon(v_1+v_2,v_3)\mu(v_3,v_1)(v_2)+\epsilon(v_1,v_2+v_3)\mu(v_2,v_3)(v_1), \\
Q(v_1,v_2,v_3,v_4)&=(v_1,\psi(v_2,v_3,v_4))-\epsilon(v_1+v_2+v_3,v_4)(v_4,\psi(v_1,v_2,v_3)) \\
 &+\epsilon(v_1+v_2,v_3+v_4)(v_3,\psi(v_4,v_1,v_2))-\epsilon(v_1,v_2+v_3+v_4)(v_2,\psi(v_3,v_4,v_1))
\end{align*}
for all $v_1,v_2,v_3,v_4 \in V$. The maps $\mu$, $\psi$ and $Q$ are called the covariants of $V$.
\end{df}

In fact, the maps $\mu$, $\psi$ and $Q$ are $\epsilon$-alternating mutilinear maps.

\begin{pp}
Let $\rho : \gg \rightarrow \so_{\epsilon}(V,(\phantom{v},\phantom{v}))$ be a finite-dimensional $\epsilon$-orthogonal representation of a finite-dimensional $\epsilon$-quadratic colour Lie algebra and let $\mu, \psi, Q$ be its covariants. We have $\mu \in Alt^2_{\epsilon}(V,\gg)$, $\psi \in Alt_{\epsilon}^3(V,V)$ and $Q \in Alt_{\epsilon}^4(V)$.
\end{pp}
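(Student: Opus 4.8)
The plan is to identify each of $\mu$, $\psi$ and $Q$ as an exterior product of maps already known to be $\epsilon$-alternating, and then to apply Proposition~\ref{pp product of mult forms}, which guarantees that any such product lies in the appropriate $Alt^{\bullet}_{\epsilon}$ space. For $\mu$ itself there is nothing new: it was shown to lie in $Alt^2_{\epsilon}(V,\gg)$ in Proposition~\ref{proprietes de mu}.

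First I would treat $\psi$. Regard the action as a bilinear map $\rho : \gg \times V \rightarrow V$ and note that $Id_V$ is an element of $Alt^1_{\epsilon}(V,V)$, the defining invariance condition being trivially satisfied since $S_1$ is trivial. I claim that $\psi = \mu \wedge_{\rho} Id_V$. Indeed $(\mu\, Id_V)_{\rho}(v_1 \otimes v_2 \otimes v_3) = \mu(v_1,v_2)(v_3)$, and the shuffle set $S(\llbracket 1,2\rrbracket,\llbracket 3,3\rrbracket)$ has exactly three elements, which place $v_3$ in each of the three slots. Computing the corresponding coefficients $p(\sigma;v_1,v_2,v_3)$ via Proposition~\ref{pp action} and using the $\epsilon$-antisymmetry of $\mu$ to rewrite each summand in the normal form $\mu(\cdot,\cdot)(\cdot)$, one recovers exactly the three terms of the definition of $\psi$. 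Proposition~\ref{pp product of mult forms} then gives $\psi \in Alt^3_{\epsilon}(V,V)$.

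Next I would treat $Q$ in the same spirit, taking the bilinear form $(\phantom{v},\phantom{v}) : V \times V \rightarrow k$ as the pairing and claiming $Q = Id_V \wedge_{(\phantom{v},\phantom{v})} \psi$. Here $(Id_V\, \psi)_{(\phantom{v},\phantom{v})}(v_1 \otimes \cdots \otimes v_4) = (v_1,\psi(v_2,v_3,v_4))$, and the shuffle set $S(\llbracket 1,1\rrbracket,\llbracket 2,4\rrbracket)$ has four elements, moving $v_1$ through the four slots. Computing the four coefficients $p(\sigma;v_1,v_2,v_3,v_4)$ and, where necessary, using the (now established) $\epsilon$-antisymmetry of $\psi$ to cyclically reorder its three arguments, one matches the four terms of $Q$. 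Proposition~\ref{pp product of mult forms} then yields $Q \in Alt^4_{\epsilon}(V)$.

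The only genuine work, and the step deserving care, is the coefficient bookkeeping that establishes these two identifications: one must check that each shuffle coefficient $p(\sigma;\ldots)$, combined with the commutation factors created when normalising $\mu$ (respectively when reordering $\psi$), reproduces precisely the commutation factor attached to the corresponding term in the definition. For instance, the shuffle sending $v_1$ to the last slot contributes coefficient $-\epsilon(v_1+v_2+v_3,v_4)$ and matches the second term of $Q$ directly, whereas the shuffle sending $v_1$ to the third slot produces $\epsilon(v_1+v_2,v_3)(v_3,\psi(v_1,v_2,v_4))$, which agrees with the third term of $Q$ only after the rewriting $\psi(v_1,v_2,v_4)=\epsilon(v_1+v_2,v_4)\psi(v_4,v_1,v_2)$. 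These are finite mechanical verifications using Proposition~\ref{pp action} and Remark~\ref{rem formule de p}, and require no further input.
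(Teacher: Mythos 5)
Your proposal is correct, and the coefficient checks you describe do come out right (for instance $p(\sigma;v_1,\ldots,v_4)=-\epsilon(v_1+v_2+v_3,v_4)$ for the shuffle placing $v_1$ last, and the rewriting $\psi(v_1,v_2,v_4)=\epsilon(v_1+v_2,v_4)\psi(v_4,v_1,v_2)$, which is legitimate because you establish $\psi\in Alt^3_{\epsilon}(V,V)$ before using it). The paper takes a slightly different, though closely related, route: it proves the proposition by exhibiting the covariants as \emph{full} $\epsilon$-antisymmetrisations, namely $\psi(v_1,v_2,v_3)=\tfrac{1}{2}\sum_{\sigma\in S_3}p(\sigma;v)\,\mu(v_{\sigma(1)},v_{\sigma(2)})(v_{\sigma(3)})$ and $Q(v_1,\ldots,v_4)=\tfrac{1}{2}\sum_{\sigma\in S_4}p(\sigma;v)\,(v_{\sigma(1)},\mu(v_{\sigma(2)},v_{\sigma(3)})(v_{\sigma(4)}))$, from which $\epsilon$-alternation follows directly (via the multiplier property of $p$, Remark~\ref{rem formule de p}), with no appeal to Proposition~\ref{pp product of mult forms}. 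Your version instead identifies $\psi=\mu\wedge_{\rho}Id_V$ and $Q=Id_V\wedge_{(\phantom{v},\phantom{v})}\psi$ as shuffle products and cites Proposition~\ref{pp product of mult forms}; by the remark following Definition~\ref{df exterior product}, a full-group antisymmetrisation equals $i!\,j!$ times the shuffle product, and since $2$ and $6$ are invertible in $k$ the two identifications involve the same bookkeeping, so the proofs are equivalent in content. Each packaging has a payoff: yours produces the structural identities $\psi=\mu\wedge_{\rho}Id_V$ and $Q=Id_V\wedge_{(\phantom{v},\phantom{v})}\psi$ (with $Q$ built inductively from the already-alternating $\psi$ rather than directly from $\mu$), which mesh with the covariant calculus of Theorem~\ref{thm id mathews}; the paper's produces the closed formulas \eqref{formule Bpsi pour mathews} and \eqref{formule BQ pour mathews}, which it reuses verbatim in the proof of the Mathews identities.
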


\begin{proof}
Let $v_1,v_2,v_3,v_4 \in V$. We have
\begin{align}
\psi(v_1,v_2,v_3)&=\frac{1}{2}\sum \limits_{\sigma\in S_3} p(\sigma ; v_1,v_2,v_3) \mu(v_{\sigma(1)},v_{\sigma(2)})(v_{\sigma(3)}), \label{formule Bpsi pour mathews}\\
Q(v_1,v_2,v_3,v_4)&=\frac{1}{2}\sum \limits_{\sigma\in S_4} p(\sigma ; v_1,v_2,v_3,v_4) (v_{\sigma(1)},\mu(v_{\sigma(2)},v_{\sigma(3)})(v_{\sigma(4)})) \label{formule BQ pour mathews}
\end{align}
and so $\psi$ and $Q$ are $\epsilon$-alternating multilinear maps.
\end{proof}

In the case of special $\epsilon$-orthogonal representations, the quadrilinear covariant is essentially the norm of the moment map.

\begin{pp} 
Let $\rho : \gg \rightarrow \so_{\epsilon}(V,(\phantom{v},\phantom{v}))$ be a finite-dimensional special $\epsilon$-orthogonal representation of a finite-dimensional $\epsilon$-quadratic colour Lie algebra $(\gg,B_{\gg})$ and let $\mu, \psi, Q$ be its covariants.
\begin{enumerate}[label=\alph*)]
\item \label{pp formules lien BQ Bpsi dans le cas CS} For all $v_1,v_2,v_3,v_4 \in V$, we have
\begin{align*}
\psi(v_1,v_2,v_3)&=3(\mu(v_1,v_2)(v_3)-\mu_{can}(v_1,v_2)(v_3)), \\
Q(v_1,v_2,v_3,v_4)&=4(v_1,\psi(v_2,v_3,v_4)).
\end{align*}
\item \label{pp BQ=-2N(mu)} We have
$$Q=-2N_{B_{\gg}}(\mu)=-4\beta(R_{\mu}).$$
\end{enumerate}
\end{pp}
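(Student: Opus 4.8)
The plan is to reduce everything to a single totally $\epsilon$-alternating quadrilinear form. First I would record the general identity, valid for \emph{any} $\epsilon$-orthogonal representation, that
$$\beta(R_{\mu})(A,B,C,D) = (\psi(A,B,C),D) \qquad \forall A,B,C,D\in V,$$
which is immediate from the definitions of $\beta$, of $R_{\mu}$ (taking $\phi=0$, so that $R_{\mu}(A,B,C,D)=(\mu(A,B)(C),D)$) and of $\psi$, after reordering the three cyclic terms. Combined with Proposition \ref{pp norme egal bianchi}$b)$ (applied with $\phi=0$) this already gives $N_{B_{\gg}}(\mu)=N_{B_{\gt}}(\mu)=2\beta(R_{\mu})$ and shows that $(\psi(A,B,C),D)$ lies in $Alt^4_{\epsilon}(V)$, i.e. is totally $\epsilon$-alternating. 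This last fact is the structural input I will exploit repeatedly.

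For the first identity of $a)$ I would set $\nu:=\mu-\mu_{can}$, viewed as a map $V\times V\to\so_{\epsilon}(V,(\phantom{v},\phantom{v}))$. The characterisation of special representations (Proposition \ref{equivalent conditions CS}$b)$) says precisely that $\nu(A,B)(C)$ is $\epsilon$-antisymmetric in $B,C$; since $\mu$ and $\mu_{can}$ are moment maps it is also $\epsilon$-antisymmetric in $A,B$, so the trilinear map $T(A,B,C):=\nu(A,B)(C)$ is totally $\epsilon$-alternating (the transpositions $(12)$ and $(23)$ generate $S_3$). Because $\psi$ depends linearly on $\mu$ through its action on $V$, and because $\psi_{\mu_{can}}=0$ (the fundamental representation of $\so_{\epsilon}$ is of colour $\mathbb{Z}_2$-Lie type by Example \ref{ex nat rep of so is Lie type}, whence $\beta(R_{\mu_{can}})=0$ and $(\psi_{\mu_{can}}(A,B,C),D)=0$ for all $D$), the cyclic sum defining $\psi$ reduces to the cyclic sum of the alternating tensor $T$; each cyclic term equals $T(v_1,v_2,v_3)$ once the commutation factors in the definition cancel against those produced by the corresponding $3$-cycle, giving $\psi(v_1,v_2,v_3)=3T(v_1,v_2,v_3)=3\big(\mu(v_1,v_2)(v_3)-\mu_{can}(v_1,v_2)(v_3)\big)$.

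For the second identity of $a)$ I would first show that $g(v_1,v_2,v_3,v_4):=(v_1,\psi(v_2,v_3,v_4))$ is itself totally $\epsilon$-alternating. Using $\psi=3T$, the $\epsilon$-symmetry of $(\phantom{v},\phantom{v})$, and the total alternation of $(\psi(A,B,C),D)=\beta(R_{\mu})(A,B,C,D)$ established above, a short computation moving $v_1$ from the last slot back to the first yields $g=-\beta(R_{\mu})$, hence $g\in Alt^4_{\epsilon}(V)$. Granting this, each of the four summands in the definition of $Q$ is $g$ evaluated on a cyclic (or block-swapped) rearrangement of $(v_1,v_2,v_3,v_4)$, multiplied by exactly the commutation factor that the alternation rule of Proposition \ref{pp action} attaches to that rearrangement; each summand therefore equals $g(v_1,v_2,v_3,v_4)$, so $Q=4g=4(v_1,\psi(v_2,v_3,v_4))$.

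Part $b)$ is then pure assembly: $Q=4g=-4\beta(R_{\mu})$ by the previous step, while $N_{B_{\gg}}(\mu)=2\beta(R_{\mu})$ by the general identity, so $Q=-2N_{B_{\gg}}(\mu)=-4\beta(R_{\mu})$. The only genuine obstacle is the bookkeeping: every ``cancellation'' rests on checking that the commutation factors attached to the cyclic and block permutations in the definitions of $\psi$ and $Q$ are exactly the $p(\sigma;\cdots)$ dictated by Proposition \ref{pp action}, so that they cancel against the factors produced by $\epsilon$-alternation. I would keep this under control by using the antisymmetrised presentations \eqref{formule Bpsi pour mathews} and \eqref{formule BQ pour mathews}, which let the whole argument rest on the single structural fact that $T$ and $g$ are totally $\epsilon$-alternating.
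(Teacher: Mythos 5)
Your proof is correct, and it is organised genuinely differently from the paper's. The paper proves the first identity of $a)$ by direct substitution: it feeds the speciality identity of Proposition \ref{equivalent conditions CS} twice into the three cyclic terms defining $\psi$ and cancels the bilinear-form terms by hand, dismisses the second identity as ``a similar calculation'', and then obtains $b)$ by comparing the explicit expression of $N_{B_{\gg}}(\mu)$ recalled from the proof of Theorem \ref{thm N(mu+phi)=0} with $Q=4(v_1,\psi(v_2,v_3,v_4))$. You instead isolate the general identity $(\psi(A,B,C),D)=\beta(R_{\mu})(A,B,C,D)$ --- valid for every $\epsilon$-orthogonal representation and nowhere stated explicitly in the paper, though it is an immediate reading of the definitions --- and route everything through two facts already in the paper: $\beta$ lands in $Alt^4_{\epsilon}(V)$ (Proposition \ref{pp projection curvature}) and $N_{B_{\gg}}(\mu)=N_{B_{\gt}}(\mu)=2\beta(R_{\mu})$ (Proposition \ref{pp norme egal bianchi} with $\phi=0$, which applies since $\phi=0$ trivially satisfies \eqref{phi g-inv} and \eqref{phi ()-inv}). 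Speciality then enters exactly once, in the clean form that $T(A,B,C)=\mu(A,B)(C)-\mu_{can}(A,B)(C)$ is $\epsilon$-antisymmetric under the swap of $B$ and $C$, hence, being already $\epsilon$-antisymmetric in $(A,B)$, totally $\epsilon$-alternating; combined with $\psi_{\mu_{can}}=0$, which you legitimately deduce from $\beta(R_{\mu_{can}})=0$ (Example \ref{ex nat rep of so is Lie type} plus Proposition \ref{pp norme egal bianchi}) and nondegeneracy of the form, the cyclic sum collapses to $3T$. I checked the deferred $\epsilon$-factor verifications (the two $3$-cycles against $T$; $g=-\beta(R_{\mu})$ by moving $v_1$ through three graded transpositions, using that $\psi$ has degree $0$; the four summands of $Q$ each reducing to $g$, with the signs $+,-,+,-$ in the definition of $Q$ matching the parities of the $4$-cycles and the block swap) and they all come out right. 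Two remarks: your citation of Proposition \ref{equivalent conditions CS}$\,b)$ should strictly be part $c)$, which is the statement you actually use (the two are of course proved equivalent there); and, more interestingly, your argument shows that the second identity of $a)$ and all of $b)$ hold for \emph{arbitrary} $\epsilon$-orthogonal representations, speciality being needed only for $\psi=3(\mu-\mu_{can})$ --- a sharpening that the paper's computational route leaves invisible, at the price of not producing the explicit intermediate formulas the paper reuses later in the proof of Theorem \ref{thm id mathews}.
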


\begin{proof}
$a)$ We prove the first identity. Using Proposition \ref{equivalent conditions CS} \ref{CS}, we have
\begin{align*}
\epsilon(v_1+v_2,v_3)\mu(v_3,v_1)(v_2)&=-\epsilon(v_2,v_3)\mu(v_1,v_3)(v_2)\\
&=\mu(v_1,v_2)(v_3)-\epsilon(v_2,v_3)(v_1,v_3)v_2-(v_1,v_2)v_3+2\epsilon(v_2,v_3)(v_3,v_2)v_1
\end{align*}
and
\begin{align*}
\epsilon(v_1,v_2+v_3)\mu(v_2,v_3)(v_1)=&-\epsilon(v_1,v_2)\mu(v_2,v_1)(v_3)+(v_2,v_3)v_1\\
&+\epsilon(v_1,v_2)(v_2,v_1)v_3-2\epsilon(v_1,v_2+v_3)(v_3,v_1)v_2.
\end{align*}
Hence,
\begin{align*}
\psi(v_1,v_2,v_3)=&3\mu(v_1,v_2)(v_3)-\epsilon(v_2,v_3)(v_1,v_3)v_2-(v_1,v_2)v_3+2\epsilon(v_2,v_3)(v_3,v_2)v_1+(v_2,v_3)v_1\\
&+\epsilon(v_1,v_2)(v_2,v_1)v_3-2\epsilon(v_1,v_2+v_3)(v_3,v_1)v_2\\
=&3\mu(v_1,v_2)(v_3)-3\epsilon(v_2,v_3)(v_1,v_3)v_2+3(v_2,v_3)v_1\\
=&3(\mu(v_1,v_2)(v_3)-\mu_{can}(v_1,v_2)(v_3)).
\end{align*}
The second identity can be proved by a similar calculation.
\vspace{0.2cm}

\noindent
$b)$ Let $v_1,v_2,v_3,v_4 \in V$. We know from the proof of Theorem \ref{thm N(mu+phi)=0} that
$$N_{B_{\gg}}(\mu)(v_1, v_2, v_3, v_4)=-2(v_1,\epsilon(v_2,v_3+v_4)\mu(v_3,v_4)(v_2)+\epsilon(v_2+v_3,v_4)\mu(v_4,v_2)(v_3)+\mu(v_2,v_3)(v_4)),$$
by \ref{pp formules lien BQ Bpsi dans le cas CS} we have
$$Q(v_1, v_2, v_3, v_4)=4(v_1,\mu(v_2,v_3)(v_4)+\epsilon(v_2+v_3,v_4)\mu(v_4,v_2)(v_3)+\epsilon(v_2,v_3+v_4)\mu(v_3,v_4)(v_2))$$
and hence $Q=-2N_{B_{\gg}}(\mu)$. Furthermore, by Proposition \ref{pp norme egal bianchi} we also have $Q=-4\beta(R_{\mu})$.
\end{proof}

\begin{ex}
If $\mu=\mu_{can}$ then the covariants $\psi$ and $Q$ are trivial.
\end{ex}
\vspace{0.1cm}

Recall that (see Definition \ref{df exterior product}) the representation $\rho$ allows us to define an exterior product
$$\wedge_{\rho} : Alt_{\epsilon}(V,\gg)\times Alt_{\epsilon}(V,V) \rightarrow Alt_{\epsilon}(V,V)$$
and the scalar multiplication allows us to define exterior products
\begin{alignat*}{3}
&\wedge_{\epsilon} &&: Alt_{\epsilon}(V,k)\times Alt_{\epsilon}(V,k) && \rightarrow Alt_{\epsilon}(V,k),\\
&\wedge_{\times} &&: Alt_{\epsilon}(V,k)\times Alt_{\epsilon}(V,V) && \rightarrow Alt_{\epsilon}(V,V),\\
&\wedge_{\times} &&: Alt_{\epsilon}(V,k)\times Alt_{\epsilon}(V,\gg) && \rightarrow Alt_{\epsilon}(V,\gg).
\end{alignat*}
Recall also that (see Definition \ref{df exterior composition}) we have composition maps
\begin{alignat*}{2}
&\circ : Alt_{\epsilon}(V,\gg) && \times Alt_{\epsilon}(V,V) \rightarrow Alt_{\epsilon}(V,\gg),\\
&\circ : Alt_{\epsilon}(V,V) && \times Alt_{\epsilon}(V,V) \rightarrow Alt_{\epsilon}(V,V),\\
&\circ : Alt_{\epsilon}(V,k) && \times Alt_{\epsilon}(V,V) \rightarrow Alt_{\epsilon}(V,k).
\end{alignat*}

With this notation we now prove some identities satisfied by the covariants of a special $\epsilon$-orthogonal representation which generalise the classical Mathews identities (see \cite{Mathews11}).

\begin{thm} \label{thm id mathews}
Let $\rho : \gg \rightarrow \so_{\epsilon}(V,(\phantom{v},\phantom{v}))$ be a finite-dimensional special $\epsilon$-orthogonal representation of a finite-dimensional $\epsilon$-quadratic colour Lie algebra and let $\mu \in Alt^2_{\epsilon}(V,\gg)$, $\psi \in Alt_{\epsilon}^3(V,V)$ and $Q \in Alt_{\epsilon}^4(V)$ be its covariants. We have the following identities:
\begin{alignat}{4}
a)&\qquad\qquad\qquad\qquad&\mu\wedge_{\rho} \psi&=-\frac{3}{2}Q\wedge_{\times} Id_V \quad &&\text{ in } Alt^5_{\epsilon}(V,V), \label{id premat} \\
b)&\qquad\qquad\qquad\qquad&\mu \circ \psi &= 3Q \wedge_{\times} \mu \quad &&\text{ in } Alt^6_{\epsilon}(V,\gg), \label{id mat1} \\
c)&\qquad\qquad\qquad\qquad&\psi\circ \psi&=-\frac{27}{2} Q\wedge_{\epsilon} Q \wedge_{\times} Id_V \quad &&\text{ in } Alt^9_{\epsilon}(V,V), \label{id mat2} \\
d)&\qquad\qquad\qquad\qquad& Q\circ \psi&=-54 Q\wedge_{\epsilon} Q \wedge_{\epsilon} Q \quad && \text{ in } Alt^{12}_{\epsilon}(V,k). \label{id mat3}
\end{alignat}
\end{thm}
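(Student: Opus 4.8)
The plan is to reduce all four identities to polynomial relations in the single covariant $\mu$ and then verify them by evaluation on homogeneous vectors. The three structural facts I will lean on are: the reformulation $\psi=\mu\wedge_{\rho}Id_V$, which is immediate from \eqref{formule Bpsi pour mathews} and the remark following Definition \ref{df exterior product} comparing $\wedge_{\rho}$ with full antisymmetrisation, together with its pointwise form $\rho(\mu(a,b))=\mu_{can}(a,b)+\tfrac13\,\psi(a,b,\cdot)$ as elements of $\so_{\epsilon}(V)$ (item \ref{pp formules lien BQ Bpsi dans le cas CS}); the two expressions $Q=4(\,\cdot\,,\psi)$ and $Q=-2N_{B_{\gg}}(\mu)$ from items \ref{pp formules lien BQ Bpsi dans le cas CS} and \ref{pp BQ=-2N(mu)}; and the special condition of Proposition \ref{equivalent conditions CS}, item \ref{CS}, which trades the ``special part'' of $\mu$ for terms built from $(\,\cdot\,,\cdot\,)$. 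I will also use that the canonical moment map satisfies $\beta(R_{\mu_{can}})=0$ (established after Proposition \ref{pp norme egal bianchi}), i.e. it obeys the curvature Jacobi identity. Throughout, the products $\wedge_{\rho},\wedge_{\epsilon},\wedge_{\times}$ and the composition $\circ$ are unwound into their defining shuffle sums (Definitions \ref{df exterior product} and \ref{df exterior composition}), so that each side becomes an explicit $\epsilon$-alternating map that I can compare coefficient by coefficient, stripping the $V$- or $\gg$-valued outputs by non-degeneracy of $(\,\cdot\,,\cdot\,)$ and $B_{\gg}$ when needed.

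First I would prove a), the pre-Mathews identity \eqref{id premat}, which serves as the base case. Evaluating $\mu\wedge_{\rho}\psi$ on $v_1,\dots,v_5$ and expanding over the ten $(2,3)$-shuffles in $S(\llbracket1,2\rrbracket,\llbracket3,5\rrbracket)$, every summand has the form $\pm\,\epsilon(\cdots)\,\rho(\mu(v_i,v_j))\bigl(\psi(v_k,v_l,v_m)\bigr)$. In each summand I substitute $\rho(\mu(a,b))=\mu_{can}(a,b)+\tfrac13\psi(a,b,\cdot)$, so that the summand splits into a $\mu_{can}$-piece and a piece still carrying a genuine $\psi$. By the explicit formula \eqref{canonical moment map} the $\mu_{can}$-piece is built from the scalars $(v_i,\psi(\cdots))$ and $(v_j,\psi(\cdots))$, which reassemble through $Q=4(\,\cdot\,,\psi)$ into $-\tfrac32\,Q\wedge_{\times}Id_V$; the remaining piece cancels after shuffle-antisymmetrisation by the total $\epsilon$-antisymmetry and the $\gg$-equivariance \eqref{phi g-inv} of $\psi$. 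Bookkeeping of the commutation factors attached to each shuffle is what fixes the constant $-\tfrac32$.

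Next I would treat b), c) and d) in that order. Identity b) is a direct computation of the same type: unwinding $\mu\circ\psi$ over the $(3,3)$-shuffles gives the shuffle-antisymmetrisation of $\mu(\psi(v_1,v_2,v_3),\psi(v_4,v_5,v_6))$; pairing with an arbitrary $x\in\gg$ via $B_{\gg}$ and using the defining property $B_{\gg}(x,\mu(a,b))=(\rho(x)(a),b)$ of the moment map, together with the equivariance of $\psi$ and $Q=4(\,\cdot\,,\psi)$, yields $3\,Q\wedge_{\times}\mu$ after removing $x$ by non-degeneracy of $B_{\gg}$. For c), I write the outer $\psi$ of $\psi\circ\psi$ as $\mu\wedge_{\rho}Id_V$; composition with $\psi$ then distributes over $\wedge_{\rho}$ up to a shuffle multiplicity, and applying b) to $\mu\circ\psi$ and then a) to the residual $\mu\wedge_{\rho}\psi$ gives $3\cdot(-\tfrac32)=-\tfrac92$ times $Q\wedge_{\epsilon}Q\wedge_{\times}Id_V$ before that multiplicity, which the shuffle count must raise to $-\tfrac{27}{2}$. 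Finally d) follows from c): since $Q=4(\,\cdot\,,\psi)$, the composition $Q\circ\psi$ is obtained from $\psi\circ\psi$ by contracting the leading slot against $(\,\cdot\,,\cdot\,)$, so feeding in c) and using $Q=4(\,\cdot\,,\psi)$ once more produces $Q\wedge_{\epsilon}Q\wedge_{\epsilon}Q$, with the arithmetic $4\cdot(-\tfrac{27}{2})\cdot\tfrac14=-\tfrac{27}{2}$ and a shuffle multiplicity of $4$ giving $-54$.

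The main obstacle throughout is combinatorial rather than conceptual: matching the two sides requires controlling the signs and commutation factors $\epsilon$ attached to each shuffle permutation and counting shuffle multiplicities correctly, and it is precisely this that pins down the constants $-\tfrac32,\,3,\,-\tfrac{27}{2},\,-54$. Getting identity a) exactly right — reorganising all ten shuffle terms so that the bilinear-form contributions close up into $Q\wedge_{\times}Id_V$ while the rest cancels — is the crux; once a) (and the parallel b)) holds with the stated constant, c) and d) are formal consequences, but verifying that the interleaving multiplicities reproduce the claimed constants (rather than, say, $-\tfrac92$ in c)) is where the care is needed.
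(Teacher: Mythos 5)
Your overall architecture (expand every product and composition into $p(\sigma;v)$-weighted antisymmetrised sums, use the special condition, then chain a) into b), c), d)) matches the paper's strategy, but your proof of a) --- which you correctly identify as the crux --- contains a genuine error. After substituting $\rho(\mu(a,b))=\mu_{can}(a,b)+\tfrac13\psi(a,b,\cdot)$, the residual piece is, up to normalisation, $\sum_{\sigma\in S_5}p(\sigma;v)\,\psi\big(v_{\sigma(1)},v_{\sigma(2)},\psi(v_{\sigma(3)},v_{\sigma(4)},v_{\sigma(5)})\big)$, and this does \emph{not} cancel. Expand the outer $\psi$ into its three $\mu$-terms: the two terms in which the inner $\psi$-value sits inside an argument of $\mu$ do vanish under antisymmetrisation, but only because of the nontrivial identity
\begin{equation*}
\sum \limits_{\sigma\in S_4} p(\sigma ; v)\,\mu\big(v_{\sigma(1)},\mu(v_{\sigma(2)},v_{\sigma(3)})(v_{\sigma(4)})\big)=0,
\end{equation*}
the paper's Equation \eqref{preuve mat eq=0 intermediaire}, which is proved by pairing orbits of an order-$8$ subgroup of $S_4$ and using the equivariance of $\mu$ so that the summands collapse into cancelling commutators $[\mu(\cdot,\cdot),\mu(\cdot,\cdot)]$. ``Total $\epsilon$-antisymmetry plus equivariance'' does not deliver this (and \eqref{phi g-inv}, which you cite, is the equivariance condition for $\phi$, not $\psi$); you never state or prove this lemma, yet it is the engine behind \emph{all} the cancellations in a)--d). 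Worse, the third term of the outer $\psi$ reproduces the main quantity: writing $[\,\cdot\,]$ for the $p$-weighted sum over $S_5$, one gets $[\psi(v_1,v_2,\psi(\cdots))]=[\mu(v_1,v_2)(\psi(\cdots))]$, so the equation is self-referential: $[\mu(\psi)]=[\mu_{can}(\psi)]+\tfrac13[\mu(\psi)]$, i.e.\ $[\mu(\psi)]=\tfrac32[\mu_{can}(\psi)]$. The canonical piece alone, reassembled via $Q=4(\,\cdot\,,\psi)$, gives only $-Q\wedge_{\times}Id_V$ (its two terms $\pm(v_i,\psi)v_j$ carry total weight $2$, whereas weight $3$ would be needed), so your claimed clean split outputs the wrong constant $-1$; the missing factor $\tfrac32$ is exactly the feedback from the non-vanishing residual. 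That the residual is nonzero is forced by the theorem itself: it equals $12\,\mu\wedge_{\rho}\psi=-18\,Q\wedge_{\times}Id_V$, already nonzero for binary cubics.

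A second, lesser weakness: in c) and d) you fix the constants $-\tfrac{27}{2}$ and $-54$ by asserting that ``the shuffle count must raise'' $-\tfrac92$ by $3$, respectively by supplying a multiplicity of $4$ --- that is reverse-engineering the constants, not deriving them. The paper avoids this by never leaving the level of fully antisymmetrised sums: it proves the reusable $S_5$- and $S_6$-level identities \eqref{preuve mathews eq1} and \eqref{preuve mathews eq2}, then obtains c) and d) by splitting $S_9$ and $S_{12}$ along shuffle decompositions (Remark \ref{rem shuffle}) and substituting, so every multiplicity is explicit. Your plan is repairable --- keep the substitution $\rho(\mu)=\mu_{can}+\tfrac13\psi$, isolate and prove the $S_4$-vanishing lemma above, and account for the self-referential term --- and with a) corrected your chaining of b), c), d) is sound in outline; but as written the proposal does not prove the theorem and would produce wrong constants.
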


\begin{proof} In this proof, for $v_1,\ldots,v_n \in V$ and $\sigma \in S_n$ we denote $p(\sigma ; v_1, \ldots, v_n)$ by $p(\sigma ; v)$ and we use implicitly Remarks \ref{rem formule de p} and \ref{rem shuffle}. Let $v_1,\ldots , v_{12}\in V$.
\vspace{0.3cm}

\noindent
$a)$ Since
$$\mu(v_1,v_2)=\frac{1}{2} \sum \limits_{\sigma\in S_2} p(\sigma ; v) \mu(v_{\sigma(1)},v_{\sigma(2)}),$$
it follows from Equation \eqref{formule Bpsi pour mathews} that
\begin{equation}\label{preuve mat formule mu wedge psi}
\mu\wedge_{\rho} \psi(v_1,v_2,v_3,v_4,v_5)=\frac{1}{4}\sum \limits_{\sigma\in S_5} p(\sigma ; v)\mu(v_{\sigma(1)},v_{\sigma(2)})(\mu(v_{\sigma(3)},v_{\sigma(4)})(v_{\sigma(5)})).
\end{equation}
Since $V$ is a special $\epsilon$-orthogonal representation, using Proposition \ref{equivalent conditions CS} \ref{CS}, we have
\begin{align}
&\mu(v_{\sigma(1)},v_{\sigma(2)})(\mu(v_{\sigma(3)},v_{\sigma(4)})(v_{\sigma(5)}))=\epsilon(v_{\sigma(1)},v_{\sigma(2)})\Big(v_{\sigma(1)},\mu(v_{\sigma(3)},v_{\sigma(4)})(v_{\sigma(5)})\Big)v_{\sigma(2)}\nonumber\\
&-2\Big(v_{\sigma(2)},\mu(v_{\sigma(3)},v_{\sigma(4)})(v_{\sigma(5)})\Big)v_{\sigma(1)}+(v_{\sigma(1)},v_{\sigma(2)})\mu(v_{\sigma(3)},v_{\sigma(4)})(v_{\sigma(5)})\nonumber\\
&-\epsilon(v_{\sigma(2)},v_{\sigma(3)}+v_{\sigma(4)}+v_{\sigma(5)})\mu(v_{\sigma(1)},\mu(v_{\sigma(3)},v_{\sigma(4)})(v_{\sigma(5)}))(v_{\sigma(2)})\label{preuve mat CS}.
\end{align}
When we take the $\epsilon$-antisymmetric sum over all permutations as in \eqref{preuve mat formule mu wedge psi} it turns out that the sums corresponding to the third and fourth terms in \eqref{preuve mat CS} vasnish as we now show.
\vspace{0.2cm}

Consider the subgroup $H:=\lbrace id,(12) \rbrace$ acting on the right on $S_5$. We have a partition $S_5=\cup \mathcal{O}_{\sigma}$ where $\mathcal{O}_{\sigma}$ is the orbit of the element $\sigma$ under the action of $H$, and there are $60$ orbits each containing $2$ elements. Since 
$$p(\sigma ; v)(v_{\sigma(1)},v_{\sigma(2)})\mu(v_{\sigma(3)},v_{\sigma(4)})(v_{\sigma(5)})+p(\sigma(12) ; v)(v_{\sigma(2)},v_{\sigma(1)})\mu(v_{\sigma(3)},v_{\sigma(4)})(v_{\sigma(5)})=0$$
then we have
\begin{equation} \label{preuve mat =0 1}
\sum \limits_{\sigma\in S_5} p(\sigma ; v)(v_{\sigma(1)},v_{\sigma(2)})\mu(v_{\sigma(3)},v_{\sigma(4)})(v_{\sigma(5)})=0.
\end{equation}
We now show that
\begin{equation} \label{preuve mat =0 2}
-\sum \limits_{\sigma\in S_5} p(\sigma ; v)\epsilon(v_{\sigma(2)},v_{\sigma(3)}+v_{\sigma(4)}+v_{\sigma(5)})\mu(v_{\sigma(1)},\mu(v_{\sigma(3)},v_{\sigma(4)})(v_{\sigma(5)}))(v_{\sigma(2)})=0.
\end{equation}
First of all, by setting $\sigma':=(2345)\in S_5$ we have
\begin{align}
&-\sum \limits_{\sigma\in S_5} p(\sigma ; v)\epsilon(v_{\sigma(2)},v_{\sigma(3)}+v_{\sigma(4)}+v_{\sigma(5)})\mu(v_{\sigma(1)},\mu(v_{\sigma(3)},v_{\sigma(4)})(v_{\sigma(5)}))(v_{\sigma(2)})\nonumber\\
&=-\sum \limits_{\sigma\in S_5} p(\sigma ; v)\epsilon(v_{\sigma\sigma'(5)},v_{\sigma\sigma'(2)}+v_{\sigma\sigma'(3)}+v_{\sigma\sigma'(4)})\mu(v_{\sigma\sigma'(1)},\mu(v_{\sigma\sigma'(2)},v_{\sigma\sigma'(3)})(v_{\sigma\sigma'(4)}))(v_{\sigma\sigma'(5)})\nonumber\\
&=\sum \limits_{\sigma\in S_5} p(\sigma ; v)\mu(v_{\sigma(1)},\mu(v_{\sigma(2)},v_{\sigma(3)})(v_{\sigma(4)}))(v_{\sigma(5)}).\label{preuve mat egalite reparam 1}
\end{align}
Moreover,
\begin{align}
&\sum \limits_{\sigma\in S_5} p(\sigma ; v)\mu(v_{\sigma(1)},\mu(v_{\sigma(2)},v_{\sigma(3)})(v_{\sigma(4)}))(v_{\sigma(5)})\nonumber\\
&=\sum \limits_{\sigma'\in S(\llbracket 1,4 \rrbracket,\lbrace 5\rbrace)} p(\sigma' ; v) \sum \limits_{\sigma \in S_4} \Big( p(\sigma ; v_{\sigma'})\mu(v_{\sigma\sigma'(1)},\mu(v_{\sigma\sigma'(2)},v_{\sigma\sigma'(3)})(v_{\sigma\sigma'(4)})) \Big)(v_{\sigma'(5)}). \label{preuve mat egalite reparam 1.2}
\end{align}
Consider the subgroup
$$H:=\lbrace id, (14),(12)(34),(1342),(23),(14)(23),(1243),(13)(24)\rbrace$$
acting on the right on $S_4$. We have a partition $S_4=\cup \mathcal{O}_{\sigma}$ where $\mathcal{O}_{\sigma}$ is the orbit of the element $\sigma$ under the action of $H$, and there are $3$ orbits each containing $8$ elements. Using the $\epsilon$-antisymmetry of $\mu$ we obtain
$$\sum \limits_{\sigma'\in \mathcal{O}_{\sigma}} p(\sigma' ; v)\mu(v_{\sigma'(1)},\mu(v_{\sigma'(2)},v_{\sigma'(3)})(v_{\sigma'(4)}))=2\sum \limits_{\sigma'\in \tilde{\mathcal{O}}_{\sigma}} p(\sigma' ; v)\mu(v_{\sigma'(1)},\mu(v_{\sigma'(2)},v_{\sigma'(3)})(v_{\sigma'(4)}))$$
where
$$\tilde{\mathcal{O}}_{\sigma}=\lbrace \sigma, \sigma(14),\sigma(12)(34),\sigma(1342) \rbrace.$$
We have
\begin{align*}
&\sum \limits_{\sigma'\in \tilde{\mathcal{O}}_{\sigma}} p(\sigma' ; v)\mu(v_{\sigma'(1)},\mu(v_{\sigma'(2)},v_{\sigma'(3)})(v_{\sigma'(4)}))\\
&=p(\sigma ; v)\mu(v_{\sigma(1)},\mu(v_{\sigma(2)},v_{\sigma(3)})(v_{\sigma(4)}))+p(\sigma(14) ; v)\mu(v_{\sigma(4)},\mu(v_{\sigma(2)},v_{\sigma(3)})(v_{\sigma(1)}))\\
&+p(\sigma (12)(34) ; v)\mu(v_{\sigma(2)},\mu(v_{\sigma(1)},v_{\sigma(4)})(v_{\sigma(3)}))+p(\sigma(1342) ; v)\mu(v_{\sigma(3)},\mu(v_{\sigma(1)},v_{\sigma(4)})(v_{\sigma(2)}))\\
&=p(\sigma ; v)\epsilon(v_{\sigma(1)},v_{\sigma(2)}+v_{\sigma(3)})[\mu(v_{\sigma(2)},v_{\sigma(3)}),\mu(v_{\sigma(1)},v_{\sigma(4)})]\\
&+p(\sigma ; v)\epsilon(v_{\sigma(2)}+v_{\sigma(3)},v_{\sigma(4)})[\mu(v_{\sigma(1)},v_{\sigma(4)}),\mu(v_{\sigma(2)},v_{\sigma(3)})]\\
&=0.
\end{align*}
Hence
\begin{equation} \label{preuve mat eq=0 intermediaire}
\sum \limits_{\sigma\in S_4} p(\sigma ; v)\mu(v_{\sigma(1)},\mu(v_{\sigma(2)},v_{\sigma(3)})(v_{\sigma(4)}))=0
\end{equation}
and using \eqref{preuve mat egalite reparam 1.2} and \eqref{preuve mat egalite reparam 1} this implies \eqref{preuve mat =0 2}. Therefore, from Equations \eqref{preuve mat CS} and \eqref{preuve mat =0 1}, \eqref{preuve mat =0 2} it follows that
\begin{align*}
&\sum \limits_{\sigma\in S_5} p(\sigma ; v)\mu(v_{\sigma(1)},v_{\sigma(2)})(\mu(v_{\sigma(3)},v_{\sigma(4)})(v_{\sigma(5)}))\\
&=\sum \limits_{\sigma\in S_5} p(\sigma ; v)\Big( \epsilon(v_{\sigma(1)},v_{\sigma(2)})(v_{\sigma(1)},\mu(v_{\sigma(3)},v_{\sigma(4)})(v_{\sigma(5)}))v_{\sigma(2)}-2(v_{\sigma(2)},\mu(v_{\sigma(3)},v_{\sigma(4)})(v_{\sigma(5)}))v_{\sigma(1)} \Big).
\end{align*}
But, if we set $\sigma'=(2345)$, we have
\begin{align*}
&\sum \limits_{\sigma\in S_5} p(\sigma ; v) \epsilon(v_{\sigma(1)},v_{\sigma(2)})\Big(v_{\sigma(1)},\mu(v_{\sigma(3)},v_{\sigma(4)})(v_{\sigma(5)})\Big)v_{\sigma(2)}\\
&=-\sum \limits_{\sigma\in S_5} p(\sigma \sigma' ; v) \Big(v_{\sigma\sigma'(1)},\mu(v_{\sigma\sigma'(2)},v_{\sigma\sigma'(3)})(v_{\sigma\sigma'(4)})\Big)v_{\sigma\sigma'(5)}\\
&=-\sum \limits_{\sigma\in S_5} p(\sigma ; v) \Big(v_{\sigma(1)},\mu(v_{\sigma(2)},v_{\sigma(3)})(v_{\sigma(4)})\Big)v_{\sigma(5)}.
\end{align*}
Similarly with $\sigma'=(12345)$ we have
$$-2\sum \limits_{\sigma\in S_5} p(\sigma ; v)\Big(v_{\sigma(2)},\mu(v_{\sigma(3)},v_{\sigma(4)})(v_{\sigma(5)})\Big)v_{\sigma(1)}=-2\sum \limits_{\sigma\in S_5} p(\sigma ; v) \Big(v_{\sigma(1)},\mu(v_{\sigma(2)},v_{\sigma(3)})(v_{\sigma(4)})\Big)v_{\sigma(5)}.$$
Therefore, we obtain
\begin{equation} \label{preuve mathews eq1}
\sum \limits_{\sigma\in S_5} p(\sigma ; v)\mu(v_{\sigma(1)},v_{\sigma(2)})(\mu(v_{\sigma(3)},v_{\sigma(4)})(v_{\sigma(5)}))=-3\sum \limits_{\sigma\in S_5} p(\sigma ; v) \Big(v_{\sigma(1)},\mu(v_{\sigma(2)},v_{\sigma(3)})(v_{\sigma(4)})\Big)v_{\sigma(5)}.
\end{equation}
Since by definition
\begin{equation}\label{preuve mat formule Q wedge Id}
Q \wedge_{\times} Id_V (v_1,v_2,v_3,v_4,v_5)=\frac{1}{2}\sum \limits_{\sigma\in S_5} p(\sigma ; v) (v_{\sigma(1)},\mu(v_{\sigma(2)},v_{\sigma(3)})(v_{\sigma(4)}))v_{\sigma(5)}
\end{equation}
it follows from Equations \eqref{preuve mat formule mu wedge psi} and \eqref{preuve mathews eq1} that
$$\mu\wedge_{\rho} \psi=-\frac{3}{2}Q \wedge_{\times} Id_V.$$
\vspace{0.2cm}

\noindent
$b)$ By definition
\begin{equation}\label{preuve mat formule mu circ psi}
\mu\circ \psi (v_1,v_2,v_3,v_4,v_5,v_6)=\frac{1}{4} \sum \limits_{\sigma\in S_6} p(\sigma ; v)\mu\left( \mu(v_{\sigma(1)},v_{\sigma(2)})(v_{\sigma(3)}),\mu(v_{\sigma(4)},v_{\sigma(5)})(v_{\sigma(6)}) \right).
\end{equation}
By equivariance, we have
\begin{align*}
&\sum \limits_{\sigma\in S_6} p(\sigma ; v)\mu\left( \mu(v_{\sigma(1)},v_{\sigma(2)})(v_{\sigma(3)}),\mu(v_{\sigma(4)},v_{\sigma(5)})(v_{\sigma(6)}) \right)\\
&=\sum \limits_{\sigma\in S_6} p(\sigma ; v)  \Big( \left[\mu(v_{\sigma(1)},v_{\sigma(2)}),\mu(v_{\sigma(3)},\mu(v_{\sigma(4)},v_{\sigma(5)})(v_{\sigma(6)}))\right]\\
&-\epsilon(v_{\sigma(1)}+v_{\sigma(2)},v_{\sigma(3)})\mu ( v_{\sigma(3)},\mu(v_{\sigma(1)},v_{\sigma(2)})(\mu(v_{\sigma(4)},v_{\sigma(5)})(v_{\sigma(6)})) ) \Big).
\end{align*}
Using \eqref{preuve mat eq=0 intermediaire}, we have
\begin{align*}
&\sum \limits_{\sigma\in S_6} p(\sigma ; v)  \left[\mu(v_{\sigma(1)},v_{\sigma(2)}),\mu(v_{\sigma(3)},\mu(v_{\sigma(4)},v_{\sigma(5)})(v_{\sigma(6)}))\right]\\
&=\sum \limits_{\sigma'\in S(\llbracket 1,2 \rrbracket,\llbracket 3,6 \rrbracket)} p(\sigma' ; v)  \left[\sum \limits_{\sigma \in S_2} p(\sigma; v_{\sigma'}) \mu(v_{\sigma\sigma'(1)},v_{\sigma\sigma'(2)}),\sum \limits_{\sigma \in S_4} p(\sigma; v_{\sigma'}) \mu(v_{\sigma\sigma'(3)},\mu(v_{\sigma\sigma'(4)},v_{\sigma\sigma'(5)})(v_{\sigma\sigma'(6)}))\right]\\
&=0.
\end{align*}
Thus 
\begin{align*}
&\sum \limits_{\sigma\in S_6} p(\sigma ; v)\mu\left( \mu(v_{\sigma(1)},v_{\sigma(2)})(v_{\sigma(3)}),\mu(v_{\sigma(4)},v_{\sigma(5)})(v_{\sigma(6)}) \right)\\
&=-\sum \limits_{\sigma\in S_6} p(\sigma ; v)\epsilon(v_{\sigma(1)}+v_{\sigma(2)},v_{\sigma(3)})\mu ( v_{\sigma(3)},\mu(v_{\sigma(1)},v_{\sigma(2)})(\mu(v_{\sigma(4)},v_{\sigma(5)})(v_{\sigma(6)})))
\end{align*}
and with a change of index using $\sigma':=(132)$ this is equal to
$$-\sum \limits_{\sigma\in S_6} p(\sigma ; v)\mu ( v_{\sigma(1)},\mu(v_{\sigma(2)},v_{\sigma(3)})(\mu(v_{\sigma(4)},v_{\sigma(5)})(v_{\sigma(6)})))$$
and this is equal to
$$=-\sum \limits_{\sigma'\in S(\llbracket 1 \rrbracket,\llbracket 2,6 \rrbracket)} \Big( p(\sigma' ; v)\mu ( v_{\sigma'(1)},\sum \limits_{\sigma\in S_5} p(\sigma ; v_{\sigma'}) \mu(v_{\sigma\sigma'(2)},v_{\sigma\sigma'(3)})(\mu(v_{\sigma\sigma'(4)},v_{\sigma\sigma'(5)})(v_{\sigma\sigma'(6)})))\Big).$$
Using Equation \eqref{preuve mathews eq1}, we have
\begin{align}
&\sum \limits_{\sigma\in S_6} p(\sigma ; v)\mu\left( \mu(v_{\sigma(1)},v_{\sigma(2)})(v_{\sigma(3)}),\mu(v_{\sigma(4)},v_{\sigma(5)})(v_{\sigma(6)}) \right)\nonumber\\
&=3\sum \limits_{\sigma'\in S(\llbracket 1 \rrbracket,\llbracket 2,6 \rrbracket)} \Big( p(\sigma' ; v)\mu ( v_{\sigma'(1)}, \sum \limits_{\sigma\in S_5} p(\sigma ; v_{\sigma'}) (v_{\sigma\sigma'(2)},\mu(v_{\sigma\sigma'(3)},v_{\sigma\sigma'(4)})(v_{\sigma\sigma'(5)}))v_{\sigma\sigma'(6)} \Big)\nonumber\\
&=3\sum \limits_{\sigma\in S_6} \Big( p(\sigma ; v)\mu ( v_{\sigma(1)}, (v_{\sigma(2)},\mu(v_{\sigma(3)},v_{\sigma(4)})(v_{\sigma(5)}))v_{\sigma(6)}) \Big)\nonumber\\
&=3\sum \limits_{\sigma\in S_6} \Big( p(\sigma ; v)(v_{\sigma(1)},\mu(v_{\sigma(2)},v_{\sigma(3)})(v_{\sigma(4)}))\mu ( v_{\sigma(5)}, v_{\sigma(6)}) \Big). \label{preuve mathews eq2}
\end{align}
Since by definition
\begin{equation} \label{preuve mat formule Q wedge mu}
Q\wedge_{\times}\mu(v_1,v_2,v_3,v_4,v_5,v_6)=\frac{1}{4}\sum \limits_{\sigma\in S_6} p(\sigma ; v)(v_{\sigma(1)},\mu(v_{\sigma(2)},v_{\sigma(3)})(v_{\sigma(4)}))\mu(v_{\sigma(5)},v_{\sigma(6)})
\end{equation}
it follows from Equations \eqref{preuve mat formule mu circ psi} and \eqref{preuve mathews eq2} that
$$\mu\circ \psi=3Q\wedge_{\times} \mu.$$
\vspace{0.2cm}

\noindent
$c)$ By definition
\begin{align}
&\psi\circ \psi(v_1,v_2,v_3,v_4,v_5,v_6,v_7,v_8,v_9)\nonumber\\
&=\frac{3}{8}\sum \limits_{\sigma\in S_9} p(\sigma ; v) \mu\Big( \mu(v_{\sigma(1)},v_{\sigma(2)})(v_{\sigma(3)}),\mu(v_{\sigma(4)},v_{\sigma(5)})(v_{\sigma(6)}) \Big)(\mu(v_{\sigma(7)},v_{\sigma(8)})(v_{\sigma(9)})). \label{preuve mat formule psi circ psi}
\end{align}
We have
\begin{align*}
&\sum \limits_{\sigma\in S_9} p(\sigma ; v) \mu\Big( \mu(v_{\sigma(1)},v_{\sigma(2)})(v_{\sigma(3)}),\mu(v_{\sigma(4)},v_{\sigma(5)})(v_{\sigma(6)}) \Big)(\mu(v_{\sigma(7)},v_{\sigma(8)})(v_{\sigma(9)}))\\
&=\sum \limits_{\sigma' \in S(\llbracket 1,6 \rrbracket,\llbracket 7,9 \rrbracket)} p(\sigma' ; v) \sum \limits_{\sigma \in S_6} p(\sigma;v_{\sigma'}) \mu\Big( \mu(v_{\sigma\sigma' (1)},v_{\sigma\sigma' (2)})(v_{\sigma\sigma' (3)}),\mu(v_{\sigma\sigma' (4)},v_{\sigma\sigma' (5)})(v_{\sigma\sigma' (6)}) \Big)\\
&\Big(\sum \limits_{\sigma \in S_3}p(\sigma;v_{\sigma'}) \mu(v_{\sigma\sigma' (7)},v_{\sigma\sigma' (8)})(v_{\sigma\sigma' (9)})\Big)
\end{align*}
and using Equation \eqref{preuve mathews eq2} this is equal to
\begin{align*}
&3\sum \limits_{\sigma \in S_9} p(\sigma ; v) (v_{\sigma(1)},\mu(v_{\sigma(2)},v_{\sigma(3)})(v_{\sigma(4)}))\mu(v_{\sigma(5)},v_{\sigma(6)}) (\mu(v_{\sigma(7)},v_{\sigma(8)})(v_{\sigma(9)}))\\
&=3\sum \limits_{\sigma' \in S(\llbracket 1,4 \rrbracket, \llbracket 5,9 \rrbracket)} p(\sigma' ; v) \sum \limits_{\sigma \in S_4}p(\sigma ; v_{\sigma'}) (v_{\sigma\sigma'(1)},\mu(v_{\sigma\sigma'(2)},v_{\sigma\sigma'(3)})(v_{\sigma\sigma'(4)}))\\
&\sum \limits_{\sigma \in S_5} p(\sigma ; v_{\sigma'})\mu(v_{\sigma\sigma'(5)},v_{\sigma\sigma'(6)})( \mu(v_{\sigma\sigma'(7)},v_{\sigma\sigma'(8)})(v_{\sigma\sigma'(9)})).
\end{align*}
Using Equation \eqref{preuve mathews eq1} we have
\begin{align}
&\sum \limits_{\sigma\in S_9} p(\sigma ; v) \mu\Big( \mu(v_{\sigma(1)},v_{\sigma(2)})(v_{\sigma(3)}),\mu(v_{\sigma(4)},v_{\sigma(5)})(v_{\sigma(6)}) \Big)(\mu(v_{\sigma(7)},v_{\sigma(8)})(v_{\sigma(9)}))\nonumber\\
&=-9\sum \limits_{\sigma' \in S(\llbracket 1,4 \rrbracket, \llbracket 5,9 \rrbracket)} p(\sigma' ; v) \sum \limits_{\sigma \in S_4}p(\sigma ; v_{\sigma'}) (v_{\sigma\sigma'(1)},\mu(v_{\sigma\sigma'(2)},v_{\sigma\sigma'(3)})(v_{\sigma\sigma'(4)}))\nonumber\\
&\sum \limits_{\sigma\in S_5} p(\sigma ; v_{\sigma'}) (v_{\sigma\sigma'(5)},\mu(v_{\sigma\sigma'(6)},v_{\sigma\sigma'(7)})(v_{\sigma\sigma'(8)}))v_{\sigma\sigma'(9)}\nonumber\\
&=-9\sum \limits_{\sigma \in S_9} p(\sigma ; v) (v_{\sigma(1)},\mu(v_{\sigma(2)},v_{\sigma(3)})(v_{\sigma(4)}))(v_{\sigma(5)},\mu(v_{\sigma(6)},v_{\sigma(7)})(v_{\sigma(8)}))v_{\sigma(9)}. \label{preuve mathews eq3}
\end{align}
Since by definition
\begin{align}
&Q\wedge_{\epsilon} Q\wedge_{\times} Id_V (v_1,v_2,v_3,v_4,v_5,v_6,v_7,v_8,v_9)\nonumber\\
&=\frac{1}{4}\sum \limits_{\sigma \in S_9} p(\sigma ; v) (v_{\sigma(1)},\mu(v_{\sigma(2)},v_{\sigma(3)})(v_{\sigma(4)}))(v_{\sigma(5)},\mu(v_{\sigma(6)},v_{\sigma(7)})(v_{\sigma(8)}))v_{\sigma(9)}, \label{preuve mat formule Q wedge Q wedge Id}
\end{align}
it follows from Equations \eqref{preuve mat formule psi circ psi} and \eqref{preuve mathews eq3} that
$$\psi\circ \psi=-\frac{27}{2}Q\wedge_{\epsilon} Q\wedge_{\times} Id_V.$$
\vspace{0.2cm}

\noindent
$d)$ By definition
\begin{align}
&Q\circ \psi(v_1,v_2,v_3,v_4,v_5,v_6,v_7,v_8,v_9,v_{10},v_{11},v_{12})\nonumber\\
&=\frac{3}{4}\sum \limits_{\sigma \in S_{12}} p(\sigma ; v) \Big( \mu(v_{\sigma(1)},v_{\sigma(2)})(v_{\sigma(3)}),\mu(\mu(v_{\sigma(4)},v_{\sigma(5)})(v_{\sigma(6)}),\mu(v_{\sigma(7)},v_{\sigma(8)})(v_{\sigma(9)}))(\mu(v_{\sigma(10)},v_{\sigma(11)})(v_{\sigma(12)})) \Big). \label{preuve mat formule Q circ psi}
\end{align}
We have
\begin{align*}
&\sum \limits_{\sigma \in S_{12}} p(\sigma ; v) \Big( \mu(v_{\sigma(1)},v_{\sigma(2)})(v_{\sigma(3)}),\mu(\mu(v_{\sigma(4)},v_{\sigma(5)})(v_{\sigma(6)}),\mu(v_{\sigma(7)},v_{\sigma(8)})(v_{\sigma(9)}))(\mu(v_{\sigma(10)},v_{\sigma(11)})(v_{\sigma(12)})) \Big)\\
&=\sum \limits_{\sigma' \in S(\llbracket1,3\rrbracket,\llbracket4,12\rrbracket)} p(\sigma' ; v) \Big( \sum \limits_{\sigma \in S_3} p(\sigma,v_{\sigma'}) \mu(v_{\sigma\sigma'(1)},v_{\sigma\sigma'(2)})(v_{\sigma\sigma'(3)}),\\
& \sum \limits_{\sigma \in S_9} p(\sigma,v_{\sigma'})  \mu\Big( \mu(v_{\sigma\sigma'(4)},v_{\sigma\sigma'(5)})(v_{\sigma\sigma'(6)}),\mu(v_{\sigma\sigma'(7)},v_{\sigma\sigma'(8)})(v_{\sigma\sigma'(9)}) \Big)(\mu(v_{\sigma\sigma'(10)},v_{\sigma\sigma'(11)})(v_{\sigma\sigma'(12)}))\Big)
\end{align*}
and using \eqref{preuve mathews eq3} this is equal to
\begin{align*}
&=-9\sum \limits_{\sigma \in S_{12}} p(\sigma ; v)(\mu(v_{\sigma(1)},v_{\sigma(2)})(v_{\sigma(3)}),v_{\sigma(12)})(v_{\sigma(4)},\mu(v_{\sigma(5)},v_{\sigma(6)})(v_{\sigma(7)}))(v_{\sigma(8)},\mu(v_{\sigma(9)},v_{\sigma(10)})(v_{\sigma(11)}))\\
&=-9\sum \limits_{\sigma \in S_{12}} p(\sigma ; v)\epsilon(v_{\sigma(1)}+v_{\sigma(2)}+v_{\sigma(3)},v_{\sigma(12)})\\
&(v_{\sigma(12)},\mu(v_{\sigma(1)},v_{\sigma(2)})(v_{\sigma(3)}))(v_{\sigma(4)},\mu(v_{\sigma(5)},v_{\sigma(6)})(v_{\sigma(7)}))(v_{\sigma(8)},\mu(v_{\sigma(9)},v_{\sigma(10)})(v_{\sigma(11)}))\\
&=-9\sum \limits_{\sigma \in S_{12}} p(\sigma ; v)(v_{\sigma(1)},\mu(v_{\sigma(2)},v_{\sigma(3)})(v_{\sigma(4)}))(v_{\sigma(5)},\mu(v_{\sigma(6)},v_{\sigma(7)})(v_{\sigma(8)}))(v_{\sigma(9)},\mu(v_{\sigma(10)},v_{\sigma(11)})(v_{\sigma(12)})).
\end{align*}
Since by definition
\begin{align}
&Q\wedge_{\epsilon} Q \wedge_{\epsilon} Q(v_1,v_2,v_3,v_4,v_5,v_6,v_7,v_8,v_9,v_{10},v_{11},v_{12})\nonumber\\
&=\frac{1}{8}\sum \limits_{\sigma \in S_{12}} p(\sigma ; v)(v_{\sigma(1)},\mu(v_{\sigma(2)},v_{\sigma(3)})(v_{\sigma(4)}))(v_{\sigma(5)},\mu(v_{\sigma(6)},v_{\sigma(7)})(v_{\sigma(8)}))(v_{\sigma(9)},\mu(v_{\sigma(10)},v_{\sigma(11)})(v_{\sigma(12)})),\label{preuve mat formule Q wedge Q wedge Q}
\end{align}
it follows from Equation \eqref{preuve mat formule Q wedge Q wedge Q} that
$$Q\circ \psi=-54 Q\wedge_{\epsilon} Q \wedge_{\epsilon} Q.$$
\end{proof}

\newpage

\section{Appendix}

Let $\Gamma$ be an abelian group and let $\epsilon$ be a commutation factor of $\Gamma$. In this section, we give examples of special $\epsilon$-orthogonal representations. For proofs and details see \cite{MeyerThesis}.
\vspace{0.2cm}

Throughout this section we suppose that the representation $k^2$ of $\sl(2,k)$ is an $\epsilon$-orthogonal representation with respect to $(\Gamma,\epsilon)$.

\subsection{The fundamental representation of $\mathfrak{so}_{\epsilon}(V,( ~ , ~ ))$} \label{subsec fund rep of so}

Let $V$ be a finite-dimensional $\Gamma$-graded vector space and let $(\phantom{v},\phantom{v})$ be a non-degenerate $\epsilon$-symmetric bilinear form on $V$. The colour Lie algebra $\so_{\epsilon}(V,(\phantom{v},\phantom{v}))$ is $\epsilon$-quadratic for the bilinear form
$$B(f,g)=-\frac{1}{2}Tr(\mathcal{E}\circ f\circ g) \qquad \forall f,g \in \so_{\epsilon}(V,(\phantom{v},\phantom{v})),$$
and the fundamental representation of $\so_{\epsilon}(V,(\phantom{v},\phantom{v}))$ has moment map $\mu_{can}$ (see Proposition \ref{pp moment map canonique}) which trivially satisfies \ref{CS 2} of Proposition \ref{equivalent conditions CS}. Hence the fundamental representation of $\so_{\epsilon}(V,(\phantom{v},\phantom{v}))$ is a special $\epsilon$-orthogonal representation and by Theorem \ref{rep tensor prod with sl2} there is a colour Lie algebra of the form
$$\so_{\epsilon}(V,(\phantom{v},\phantom{v}))\oplus \sl(2,k)\oplus V\otimes k^2.$$
One can check that it is isomorphic to the colour Lie algebra $\so_{\epsilon}(V\oplus k^2,(\phantom{v},\phantom{v})\perp \omega)$ and that the associated covariants $\psi \in Alt^3_{\epsilon}(V,V)$ and $Q \in Alt_{\epsilon}^4(V)$ vanish.

\subsection{The fundamental representation of $\mathfrak{so}_{\epsilon}(V,(~,~))\oplus \mathfrak{sl}(2,k)$}

Let $V$ be a finite-dimensional $\Gamma$-graded vector space, let $(\phantom{v},\phantom{v})$ be a non-degenerate $\epsilon$-symmetric bilinear form on $V$ and let $(W,\Omega)$ be a two-dimensional $\Gamma$-graded symplectic vector space. The colour Lie algebra $\so_{\epsilon}(V,(\phantom{v},\phantom{v}))\oplus \so_{\epsilon}(W,\Omega)$ is $\epsilon$-quadratic for the bilinear form $B_{V}\perp B_{W}$ where
\begin{alignat*}{3}
&B_V(f,g)&&=\frac{1}{4}Tr(\mathcal{E}\circ f \circ g) \quad &&\forall f,g \in\so_{\epsilon}(V,(\phantom{v},\phantom{v})), \\
&B_W(f,g)&&=-\frac{1}{2}Tr(\mathcal{E}\circ f \circ g) \quad &&\forall f,g \in\so_{\epsilon}(W,\Omega).
\end{alignat*}
The $\epsilon$-orthogonal representation
$$\so_{\epsilon}(V,(\phantom{v},\phantom{v}))\oplus \so_{\epsilon}(W,\Omega)\rightarrow \so_{\epsilon}(V\otimes W,(\phantom{v},\phantom{v})\otimes \Omega)$$
of the $\epsilon$-quadratic colour Lie algebra $\Big(\so_{\epsilon}(V,(\phantom{v},\phantom{v}))\oplus \so_{\epsilon}(W,\Omega),B_V\perp B_W\Big)$ has moment map which satisfies \ref{CS} of Proposition \ref{equivalent conditions CS} and then by Theorem \ref{rep tensor prod with sl2} it gives rise to colour Lie algebra of the form
$$\so_{\epsilon}(V,(\phantom{v},\phantom{v}))\oplus \so_{\epsilon}(W,\Omega)\oplus\sl(2,k)\oplus V\otimes W\otimes k^2.$$
One can check that it is isomorphic to $\so_{\epsilon}(V\oplus H,(\phantom{v},\phantom{v})\perp(\phantom{v},\phantom{v})_H)$ where $(H,(\phantom{v},\phantom{v})_H)$ is a four-dimensional hyperbolic vector space.
\vspace{0.2cm}

Let $\lbrace p,q\rbrace$ be an homogeneous basis of $W$ such that $\Omega(p,q)=1$ and let $Q \in Alt_{\epsilon}^4(V\otimes W)$ be the quadrilinear covariant of the $\epsilon$-orthogonal representation $V\otimes W$. For all $v_1,v_2,v_3,v_4 \in V$ we have
\begin{align*}
Q(v_1\otimes p,v_2\otimes p,v_3\otimes p,v_4\otimes p)&=Q(v_1\otimes q,v_2\otimes q,v_3\otimes q,v_4\otimes q)=0,\\
Q(v_1\otimes p,v_2\otimes p,v_3\otimes p,v_4\otimes q)&=Q(v_1\otimes p,v_2\otimes q,v_3\otimes q,v_4\otimes q)=0
\end{align*}
and
\begin{align*}
&Q(v_1\otimes p,v_2\otimes p,v_3\otimes q,v_4\otimes q)\\
&=24\epsilon(p,v_2+v_3)(v_1,v_2)(v_3,v_4)-12\epsilon(p,v_3)^2\epsilon(v_3,v_4)(v_1,v_3)(v_2,v_4)-12\epsilon(p,v_3+v_4)(v_2,v_3)(v_1,v_4).
\end{align*}

\subsection{Restriction of the fundamental representation of $\mathfrak{so}_{\epsilon}(V,( ~ , ~ ))$ to $\mathfrak{gl}_{\epsilon}(V)$ and $\mathfrak{u}_{\epsilon}(V,H)$}

Let $V$ be a finite-dimensional $\Gamma$-graded vector space such that $dim(V_0)\not\equiv dim(V_1)\mod char(k)$ and let $(\phantom{v},\phantom{v})$ be a non-degenerate $\epsilon$-symmetric bilinear form on $V$. Let $J \in \so_{\epsilon}(V,(\phantom{v},\phantom{v}))$ be such that $|J|=0$ and $J^2=\lambda Id$, where $\lambda \in k^*$. Let
$$\mm:=\lbrace f \in \so_{\epsilon}(V,(\phantom{v},\phantom{v})) ~ | ~ f\circ J=J\circ f \rbrace.$$
This is a colour Lie subalgebra of $\so_{\epsilon}(V,(\phantom{v},\phantom{v}))$.
\vspace{0.1cm}

There exists an $\epsilon$-symmetric, ad-invariant and non-degenerate bilinear form $B$ on $\mm$ such that the moment map of the $\epsilon$-quadratic representation $\rho : \mm \rightarrow \so_{\epsilon}(V,(\phantom{v},\phantom{v}))$ satisfies
\begin{equation}\label{moment 3 appendice}
\mu(v,w)=\mu_{can}(v,w)-\frac{1}{\lambda}\mu_{can}(J(v),J(w))+\frac{1}{\lambda}(J(v),w)J \qquad \forall v,w \in V.
\end{equation}
This moment map satisfies \ref{CS} of Proposition \ref{equivalent conditions CS} and then by Theorem \ref{rep tensor prod with sl2}, the representation $\mm\rightarrow \so_{\epsilon}(V,(\phantom{v},\phantom{v}))$ gives rise to a colour Lie algebra of the form
$$\gg:=\mm\oplus \sl(2,k)\oplus V\otimes k^2.$$
The covariant $\psi \in Alt^3_{\epsilon}(V,V)$ of the representation $\mm\rightarrow \so_{\epsilon}(V,(\phantom{v},\phantom{v}))$ satisfies: for all $v_1,v_2,v_3 \in V$,
$$\psi(v_1,v_2,v_3)=\frac{3}{\lambda}\Big((J(v_1),v_2)J(v_3)+\epsilon(v_1+v_2,v_3)(J(v_3),v_1)J(v_2)+(J(v_2),v_3)J(v_1)\Big).$$
\vspace{0.1cm}

\begin{itemize}
\item If $\lambda\in k^{*2}$, we have $V=W\oplus W'$ where $W$ (resp. $W'$) is the eigenspace of $J$ for the eigenvalue $\sqrt{\lambda}$ (resp. $-\sqrt{\lambda}$). The colour Lie algebra $\mm$ is isomorphic to $\gl_{\epsilon}(W)$ and $\gg$ is isomorphic to
$$
\left\{
    \begin{array}{ll}
        \sl_{\epsilon}(W\oplus k^2)& \mbox{if } dim(W_1)\not\equiv dim(W_0)-2\mod char(k); \\
        \gl_{\epsilon}(W\oplus k^2)/\mathcal{Z}(\gl_{\epsilon}(W\oplus k^2)) & \mbox{otherwise.}
    \end{array}
\right.
$$
\item If $\lambda\notin k^{*2}$, let $\kt=k(\sqrt{\lambda})$. We have $V\otimes \kt=W\oplus W'$ where $W$ (resp. $W'$) is the eigenspace of $J$ for the eigenvalue $\sqrt{\lambda}$ (resp. $-\sqrt{\lambda}$). The map $H : W\times W  \rightarrow \kt$ defined by
$$H(v,w):=(J(v),\bar{w}) \qquad \forall v,w\in W$$
is an $\epsilon$-antihermitian form and the colour Lie algebra $\mm$ is isomorphic to $\uu_{\epsilon}(W,H)$. If we consider the $\Gamma$-graded vector space $\kt^2=k^2\underset{k}{\otimes} \kt$ together with the $\epsilon$-antihermitian form $\Omega : \kt^2\times\kt^2\rightarrow \kt$ defined by
$$\Omega\Big(\begin{pmatrix}
a \\
b
\end{pmatrix},\begin{pmatrix}
c \\
d
\end{pmatrix}\Big):=a\bar{c}-b\bar{d} \qquad \forall a,b,c,d\in \kt,$$
then we have $\su_{\epsilon}(\kt^2,\Omega)\cong \sl(2,k)$. The colour Lie algebra $\gg$ is isomorphic to
$$
\left\{
    \begin{array}{ll}
       \su_{\epsilon}(W\oplus \kt^2,H\perp \Omega)& \mbox{if } dim(W_1)\not\equiv dim(W_0)-2\mod char(k); \\
        \uu_{\epsilon}(W\oplus \kt^2,H\perp \Omega)/\mathcal{Z}(\uu_{\epsilon}(W\oplus \kt^2,H\perp \Omega)) & \mbox{otherwise.}
    \end{array}
\right.
$$
\end{itemize}

\begin{rem}
The moment map \eqref{moment 3 appendice} takes its values in $\lbrace \mm,\mm\rbrace$ if and only if  $dim(W_1)\equiv dim(W_0)-2\mod char(k)$. In that case $V$ is a special $\epsilon$-orthogonal representation of the $\epsilon$-quadratic colour Lie algebra $(\lbrace\mm,\mm\rbrace,B)$ and the associated colour Lie algebra $\gg$ is isomorphic to
$$
\left\{
    \begin{array}{ll}
    \mathfrak{psl}_{\epsilon}(W\oplus k^2)& \mbox{if }\lambda\in k^{*2}; \\
       \mathfrak{psu}_{\epsilon}(W\oplus \kt^2,H\perp \Omega)& \mbox{otherwise.}
    \end{array}
\right.
$$
\end{rem}

\newpage

\footnotesize
\bibliographystyle{alpha}
\bibliography{generaliseKostant}
\addcontentsline{toc}{section}{\protect\numberline{}References} 

\end{document}